\newtheorem{theorem}{Theorem} [section]
\newtheorem{proposition}[theorem]{Proposition}
\newtheorem{example}{Example}[section]
\newtheorem{assumptions}[theorem]{Assumptions}
\newtheorem{remark}[theorem]{Remark}
\theoremstyle{definition}
\newtheorem{definition}[theorem]{Definition}
\newcommand{\C}{\mathbb{C}}
\newcommand{\Tr}{{\rm Tr}}
\newcommand{\supp}{{\rm supp}}
\newcommand{\dx}{ {\rm{d}} }
\def\XXint#1#2#3{{\setbox0=\hbox{$#1{#2#3}{\int}$}
		\vcenter{\hbox{$#2#3$}}\kern-.5\wd0}}
\tikzset{->-/.style={decoration={
			markings,
			mark=at position #1 with {\arrow{latex}}},postaction={decorate}}}
\tikzset{-<-/.style={decoration={
			markings,
			mark=at position #1 with {\arrowreversed{latex}}},postaction={decorate}}}
\tikzset{cross/.style={cross out, draw, 
		minimum size=2*(#1-\pgflinewidth), 
		inner sep=0pt, outer sep=0pt}}
\numberwithin{equation}{section}
\begin{document}
	\title{Determinantal point processes conditioned on randomly incomplete configurations}
	\author{Tom Claeys and Gabriel Glesner}
	\maketitle
	\begin{abstract}
		For a broad class of point processes, including determinantal point processes, we construct associated marked and conditional ensembles, which allow to study a random configuration in the point process, based on information about a randomly incomplete part of the configuration.
		We show that our construction yields a well behaving transformation of sufficiently regular point processes.
		In the case of determinantal point processes, we explain that special cases of the conditional ensembles already appear implicitly in the literature, namely in the study of unitary invariant random matrix ensembles, in the Its-Izergin-Korepin-Slavnov method to analyze Fredholm determinants, and in the study of number rigidity.
		As applications of our construction, we show that a class of determinantal point processes induced by orthogonal projection operators, including the sine, Airy, and Bessel point processes, satisfies a strengthened notion of number rigidity, and we give a probabilistic interpretation of the 
		Its-Izergin-Korepin-Slavnov method.\\ \medskip 
		
		Pour une large classe de processus de points, contenant les processus de points d\'eterminantaux, nous construisons les ensembles marqu\'es et conditionn\'es associ\'es qui permettent d'\'etudier une configuration al\'eatoire dans le processus de points, \`a partir d'information sur une partie al\'eatoirement incompl\`ete de cette configuration. Nous montrons que notre construction produit une transformation qui se comporte bien pour des processus de points suffisamment r\'eguliers. Dans le cas des processus de points déterminantaux, nous expliquons que certains cas particuliers de ces ensembles conditionn\'es sont d\'ej\`a apparus implicitement dans la litt\'erature, \`a savoir dans l'\'etude des ensembles de matrices al\'eatoires unitairement invariant, dans la m\'ethode de Its, Izergin, Korepin et Slavnov pour analyser les d\'eterminants de Fredholm, et dans l'\'etude de la rigidit\'e des nombres. Comme application de notre construction, nous montrons qu'une classe de processus de points d\'eterminantaux induits par des projections orthogonales, comprenant les processus de points sinus, Airy et Bessel, satisfait une propri\'et\'e plus forte que la rigidit\'e des nombres, et nous donnons une interpr\'etation probabiliste de la m\'ethode de Its, Izergin, Korepin et Slavnov.
		
	\end{abstract}

	\section{Introduction}\label{section:1}
	
	\subsection{Background and motivation}
	Determinantal point processes (DPPs) are point processes whose correlation functions can be written as determinants of a correlation kernel, and for which average multiplicative statistics are Fredholm determinants. Prominent examples of DPPs
	are the eigenvalue distributions of a large class of random matrix ensembles, distributions of particles in asymmetric exclusion processes and tiling models, distributions of non-intersecting random paths, and the zeros of Gaussian analytic functions. They are special cases of repulsive point processes, in which one can study relevant probabilistic quantities through the analysis of the correlation kernel and associated Fredholm determinants \cite{BaikDeiftSuidan, HoughKrishnapurPeresVirag, Johansson, Lyons, Macchi, Soshnikov}.
	
	\medskip
	
	A groundbreaking discovery 
	for the development of random matrix theory and more generally the study of DPPs
	has been the observation of Wigner and Dyson and their collaborators in the 1960s that energy levels of heavy nuclei can be accurately modelled by eigenvalues of random matrices. Despite his spectacular contributions, when Dyson looked back at his work on heavy nuclei in 2002 during the MSRI program {\em Recent Progress in Random Matrix Theory and Its Applications}, he explained \cite{Dyson} that the practical implications of his work on random matrices in nuclear physics were disappointing, because detectors were imperfect, and missing or spurious energy levels corrupted the data.
	Inspired by this, Dyson raised the question to develop error-correcting code for random matrix eigenvalues: given an imperfect observed spectrum of a random matrix, can one detect  missing or spurious eigenvalues?
	This would not be possible for point processes with independent points, because the positions
	of a fraction of the points in the process do not carry any information about the other points.
	In strongly correlated point configurations such as random matrix eigenvalues or DPPs, one can however hope to extract information based on incomplete data. According to \cite{Dyson}, Dyson did not suggest this direction of research because of its importance in nuclear physics, but purely because he believed it would lead to interesting mathematics.
{This question has been explored by Bohigas and Pato \cite{BohigasdeCarvalhoPato, BohigasPato} using randomly thinned random matrix eigenvalues, and has been picked up in the mathematics literature with the study of random thinnings of DPPs \cite{BerggrenDuits, Bothner, BothnerBuckingham, BothnerDeiftItsKrasovsky, BothnerItsProkhorov, Charlier1, Charlier2, Charlier3, CharlierClaeys, ForresterMays}, but a general mathematical theory for extracting information from the observation of randomly thinned DPPs has not been developed so far.}
	
	\medskip
	
	However, in the same spirit of attempting to extract information about DPPs from a partial observation,
	the remarkable property of number rigidity has recently been investigated.
	Informally, a point process is said to be number rigid if the configuration of points outside any bounded set determines almost surely the number of points inside the set. Important DPPs like the sine, Airy, and Bessel point processes arising in random matrix theory, are known to be number rigid \cite{Bufetov4, Ghosh, GhoshPeres, Lyons}, and in the case of the sine process, the distribution of the points inside a bounded set, conditioned on the configuration of points outside the set, has been studied and proved to converge to the sine process when the size of the interval grows \cite{KuijlaarsMinaDiaz}.
	
	\medskip
	
	In this work, for any sufficiently regular point process, and in particular for any DPP, we introduce a family of marked and conditional point processes which allow to formalize the following question: {\em Given a randomly incomplete sample of the point process, what can we say about the missing points?}
	Although these point processes have, to the best of our knowledge, not been introduced and studied on a general basis, special cases of them do already appear in the literature in various contexts, as we will explain in more detail later; firstly, unitarily invariant Hermitian random matrix ensembles are a special case of conditional ensembles associated to the Gaussian Unitary Ensemble (GUE); secondly, special cases of the conditional ensembles arise naturally in the Its-Izergin-Korepin-Slavnov (IIKS) \cite{IIKS} method to characterize Fredholm determinants via Riemann-Hilbert problems; and finally, special cases of the conditional ensembles have been studied in relation to number rigidity.

	\medskip
	{
Our objectives are:
\begin{enumerate}
\item to construct the marked and conditional ensembles rigorously;
\item to prove that the conditional ensembles define a well-behaving transformation which preserves the structure of DPPs and of several interesting subclasses of DPPs;
\item to introduce a refined notion of number ridigity and to show that important DPPs like the sine, Airy, and Bessel DPPs satisfy this notion of rigidity;
\item to illustrate that the IIKS method provides an effective framework to study the conditional ensembles via Riemann-Hilbert methods.
\end{enumerate}
	}
	\subsection{DPPs: generalities and main examples}
	
	Consider a measure space $(\Lambda,\mathcal B_\Lambda,\mu)$, with $\Lambda$ a complete separable metric space, $\mathcal B_\Lambda$ the Borel $\sigma$-algebra, and $\mu$ a locally\footnote{Here and for the rest of this paper, whenever we say that a property holds locally, we mean that it holds for any bounded Borel set.} finite positive Borel measure on $\Lambda$, i.e.\ satisfying \(\mu(B)<\infty\) for any bounded \(B\in\mathcal B_\Lambda\). 
	We will be mainly interested in $\Lambda=\mathbb R$ with the Lebesgue measure or $\Lambda$ the unit circle in the complex plane with the arc length measure, and the reader may prefer to keep only these examples in mind for the sake of simplicity.
	Let $\mathbb P$ be a simple point process on $\Lambda$, i.e.\ a probability measure on the set $\mathcal N(\Lambda)$ of locally finite point configurations in $\Lambda$ (see Section \ref{section:2} for a more precise definition of the probability space), such that there are a.s.\ no points with multiplicity $>1$.  We can represent such a configuration $\xi\in\mathcal N(\Lambda)$ as a locally finite counting measure
	\[\xi=\sum_{j\in J}\delta_{x_j},\]
	where $J$ is a countable index set, and $x_j\in \Lambda$, \(x_i\neq x_j\) when \(i\neq j\). 
	Recall (see e.g.\ \cite[Section 9.4]{DaleyVereJones}) that a simple point process on $\Lambda$ is characterized uniquely by its Laplace functional
	\[\mathcal L: B_+(\Lambda)\to \mathbb R^+:f\mapsto \mathcal L[f],\qquad \mathcal L[f]=\mathbb E e^{-\sum_{x\in\supp\,\xi}f(x)}=\mathbb E e^{-{\int_\Lambda f\dx \xi}},\]
	where $B_+(\Lambda)$ is the space of bounded non-negative measurable functions $f:\Lambda\to [0,+\infty)$ with bounded support.
	
	\medskip
	
	Some of our results hold for any sufficiently regular point process, but our main focus will be on DPPs, for which the correlation functions $\rho_k:\Lambda^k\to [0,+\infty)$ (see again Section \ref{section:2} for details) of all orders exist and can be written in terms of a correlation kernel $K(x_i,x_j)$ in determinantal form:
	\begin{equation}\label{correlationkernel}
		\rho_k(x_1,\ldots,x_k)=\det\left(K(x_i,x_j)\right)_{i,j=1}^k.
	\end{equation}
If $K:\Lambda^2\to\mathbb C$ is the kernel of a locally trace class operator $\mathrm K$ on $L^2(\Lambda,\mu)$, then the Laplace functional is a Fredholm determinant:
	\begin{equation}\label{eq:LaplaceFredholm0}
		\mathcal L[f]=\det\left(1-\mathrm M_{\sqrt{1-e^{-f}}}\mathrm K\mathrm M_{\sqrt{1-e^{-f}}}\right),\end{equation}
	with $\mathrm M_g$ the multiplication operator with $g\in L^\infty(\Lambda,\mu)$ on $L^2(\Lambda,\mu)$, and the determinant is given by Fredholm's formula 
	\begin{equation}\label{def:Fredholm}
		\det\left(1-\mathrm M_{\sqrt{g}}\mathrm K\mathrm M_{\sqrt g}\right)=\sum_{k=0}^\infty \frac{(-1)^k}{k!} \int_{\Lambda^k}\det\left({\sqrt{g(x_i)}}K(x_i,x_j)\sqrt{g(x_j)}\right)_{i,j=1}^k \prod_{j=1}^k\mathrm d\mu(x_j).
	\end{equation}
	Note that the kernel \(K\) might not be well defined on the diagonal of \(\Lambda^2\), however we can always assume that \(K(x,x)\) is chosen such that for any bounded Borel set \(B\) the following holds (see \cite{Soshnikov}):
	\begin{equation*}
		\mathrm{Tr}\, \left.\mathrm K\right|_{L^2(B,\mu)}=\int_BK(x,x)\mathrm d\mu(x).
	\end{equation*}
	For notational convenience, let us introduce a change of variable in the Laplace functional and define the {\em average multiplicative functional}
	\begin{equation}\label{eq:LaplaceFredholm}
		L[\phi]:=\mathbb E\prod_{x\in\supp\,\xi}(1-\phi)(x)=\mathcal L[-\log(1-\phi)],
	\end{equation}
	for $\phi:\Lambda\to[0,1]$ measurable and with bounded support, such that $L[\phi]=\det\left(1-\mathrm M_{\sqrt{\phi}} \mathrm K\mathrm M_{\sqrt{\phi}}\right)$ if $\mathbb P$ is the DPP with kernel of the operator $\mathrm K$.
	
	\medskip
	
	Besides DPPs, it will be insightful to keep in mind the example of a Poisson point process with bounded locally integrable intensity $\rho:\Lambda\to[0,+\infty)$, for which
	\begin{equation}\label{def:Poissoncorrelation}
		\rho_k(x_1,\ldots,x_k)=\prod_{j=1}^k\rho(x_j).
	\end{equation}

	\medskip

	In Sections \ref{section:3}--\ref{section:5}, we will consider some important subclasses of DPPs, which we already define now. 
	
	\begin{example}\label{Example-OPE} {\bf Orthogonal polynomial ensembles (OPEs).}
		Let $N$ be a positive integer and consider the point process consisting of configurations of $N$ real points $x_1,\ldots, x_N$ with joint probability distribution
		\begin{equation}\label{jpdf}
			\frac{1}{Z_N}\Delta(x_1,\ldots,x_N)^2\ \prod_{j=1}^N w(x_j)\mathrm dx_j,\qquad \Delta(x_1,\ldots, x_N)=\prod_{1\leq i<j\leq N}(x_j-x_i),
		\end{equation}
		where $Z_N$ is a normalization constant, and $w(x)$ is a non-negative integrable weight function decaying sufficiently fast as $x\to \pm\infty$, such that all the moments $\int_{\mathbb R}x^kw(x)dx$, $k\in\mathbb N$, exist.
		If $w(x)=e^{-2Nx^2}$, this is the distribution of the (re-scaled, such that the eigenvalues follow a semi-circle law on $[-1,1]$) eigenvalues of a random matrix from the GUE. If $w(x)=x^\alpha e^{-Nx}1_{(0,+\infty)}(x)$ with $\alpha>-1$, it is the distribution of the eigenvalues of a random matrix in the Laguerre-Wishart ensemble. More generally, if $w$ takes the form $w(x)=e^{-NV(x)}$ with $V$ real analytic and growing sufficiently fast at $\pm\infty$, 
		\eqref{jpdf} is the eigenvalue distribution of a random matrix in the unitary invariant ensemble 
		\[\frac{1}{\widehat Z_N} e^{-N{\rm Tr }V(M)} \mathrm dM,\]
		with $\dx M$ the Lebesgue measure on the space of $N\times N$ Hermitian matrices, and $\widehat Z_N$ a normalization constant.
		
		\medskip
		
		Similarly, let $N$ be a positive integer and consider the point process consisting of configurations of $N$ points 
		$e^{it_1},\ldots, e^{it_N}$ on the unit circle in the complex plane with joint probability distribution
		\begin{equation}\label{jpdfUC}
			\frac{1}{Z_N}\prod_{1\leq j<k\leq N}|\Delta(e^{it_1},\ldots,e^{it_N})|^2\ \prod_{j=1}^N w(e^{it_j})\mathrm dt_j,\qquad t_j\in[0,2\pi),
		\end{equation}
		where $Z_N$ is a normalization constant, and $w(e^{it})$ is a non-negative integrable weight function.
		If $w(e^{it})=1$, this is the distribution of the eigenvalues of a random matrix from the Circular Unitary Ensemble (CUE), or in other words a Haar distributed $N\times N$ unitary matrix. 
		\medskip
		
It is well-known that the above OPEs are DPPs, with correlation kernel $K_N$ built out of orthogonal polynomials on the real line or on the unit circle. We will study these ensembles in more detail in Section \ref{section:4}.
	\end{example}
	
	\begin{example}\label{example:projection} {\bf DPPs induced by orthogonal projection operators.}
		Consider a DPP with correlation kernel $K$
		whose associated integral operator $\mathrm K$ on $L^2(\Lambda,\mu)$, defined by
		\begin{equation}\label{integral operator}
			\mathrm K f(x)=\int_\Lambda K(x,y)f(y)\mathrm d\mu(y),
		\end{equation}
		is a locally trace class orthogonal projection onto a closed subspace $H$ of $L^2(\Lambda,\mu)$. As we will see, the OPEs from Example \ref{Example-OPE} are of this form, and the associated projection operators are then of rank $N$. We recall from \cite{Soshnikov} that a DPP defined by the kernel of a Hermitian locally trace class operator $\mathrm K$ has the property that the number of particles is a.s.\ equal to $N$, i.e. $\mathbb P\left(\xi(\Lambda)=N\right)=1$, if and only if $\mathrm K$ is a projection operator of rank $N$. 
We will also consider DPPs induced by infinite rank projection operators. Such DPPs arise for instance when taking scaling limits of the kernels $K_N$ from Example \ref{Example-OPE}: we mention the DPPs defined by the sine kernel, the Airy kernel, the edge Bessel kernel, and the bulk Bessel kernel \cite{Deift, Kuijlaars}. 
		More complicated kernels associated to Painlev\'e equations and hierarchies (see \cite{Duits} for an overview), arising as double scaling limits of OPEs, are also of this form. We will consider such DPPs and derive rigidity results for some of them in Section \ref{section:3}.
	\end{example}
	\begin{example}\label{example:integrable}
		{\bf DPPs with integrable kernels.}
		In line with the terminology of Its, Izergin, Korepin, and Slavnov \cite{IIKS}, we say that a kernel $K(x,y)$ is $k$-integrable if it can be written in the form
	{	\begin{equation}\label{k-intkernel}
				K(x,y)=\frac{\sum_{j=1}^k f_j(x)g_j(y)}{x-y}\qquad \mbox{with } \sum_{j=1}^k f_j(x)g_j(x)=0,
					\end{equation}
		for some functions $f_j, g_j:\Lambda\to\mathbb C$, $j=1,\ldots, k$.}
		The previous examples of OPEs on the real line and on the unit circle are $2$-integrable, and so are the sine point process, the Airy point process, and the Bessel point processes.
		
		There are however many DPPs with integrable kernels that are not induced by projection operators. Indeed, if a kernel $K(x,y)$ defines a DPP on $\Lambda$, then any kernel of the form $\phi(x)K(x,y)$ with $\phi:\Lambda\to [0,1]$ measurable also defines a DPP, namely the random thinning of the original DPP realized by removing each particle $x$ in the support of a random point configuration $\xi$ independently with probability $1-\phi(x)$ \cite{LavancierMollerRubak}. If $K(x,y)$ is of integrable form, it is easy to see that the same is true for $\phi(x)K(x,y)$, but even if $K(x,y)$ defines an orthogonal projection operator, $\phi(x)K(x,y)$ in general does not define a projection operator. DPPs with integrable kernels will be our topic of interest in Section \ref{section:5}. 
	\end{example}

	\medskip
	
	\subsection{Marking and conditioning: informal construction and statement of results}
	
	For any sufficiently regular point process $\mathbb P$, we can construct an associated marked point process in which we assign a random mark to each point independently. If the random mark is a Bernoulli random variable taking the value $0$ or $1$, 
	then the marked point process is a point process on $\Lambda\times\{0,1\}$, in which
	we interpret the points with mark $1$ as visible or observed particles, and the points with mark $0$ as invisible or unobserved particles.
	Concretely, we mark the points in the DPP by introducing a measurable marking function $\theta:\Lambda\to[0,1]$, and by assigning mark $1$ to particle $x$ in a configuration of the DPP with probability $\theta(x)$, and mark $0$ with probability $1-\theta(x)$. We denote the resulting marked point process as $\mathbb P^\theta$. The random marking splits a configuration $\xi$ on $\Lambda$ into configurations $\xi_0$ and $\xi_1$, where  $\xi_b$ is the configuration $\xi$ restricted to the points with mark $b$. We denote $\mathbb P^\theta_b$, $b=0,1$, for the marginal probability distribution of $\xi_b$, which is a random position-dependent thinning of the ground process $\mathbb P$.
	We will introduce these marked point processes in detail in Section \ref{section:2}, and gather some of their general properties in Proposition \ref{prop:marked}. 
	The point processes in which we are most interested here, are point processes obtained as conditional ensembles of this marked point process, by conditioning on the (observed) configuration of mark $1$ points. 

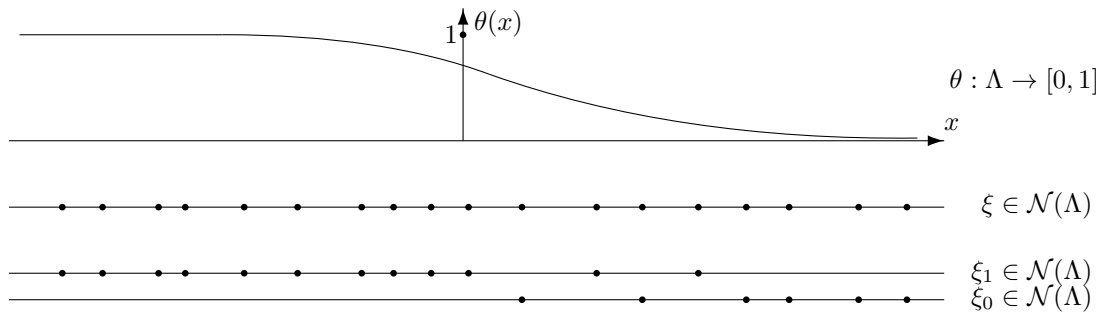
\begin{figure}[t]
\begin{center}
\begin{picture}(126,110)(20,0)

\put(-120,35){\line(1,0){350}}

\put(-120,10){\line(1,0){350}}
\put(-120,0){\line(1,0){350}}

\put(-100,35){\thicklines\circle*{2.5}}
\put(-85,35){\thicklines\circle*{2.5}}
\put(-64,35){\thicklines\circle*{2.5}}
\put(-54,35){\thicklines\circle*{2.5}}
\put(-32,35){\thicklines\circle*{2.5}}
\put(-12,35){\thicklines\circle*{2.5}}
\put(12,35){\thicklines\circle*{2.5}}
\put(24,35){\thicklines\circle*{2.5}}
\put(38,35){\thicklines\circle*{2.5}}
\put(52,35){\thicklines\circle*{2.5}}
\put(72,35){\thicklines\circle*{2.5}}
\put(100,35){\thicklines\circle*{2.5}}
\put(117,35){\thicklines\circle*{2.5}}
\put(138,35){\thicklines\circle*{2.5}}
\put(156,35){\thicklines\circle*{2.5}}
\put(172,35){\thicklines\circle*{2.5}}
\put(198,35){\thicklines\circle*{2.5}}
\put(216,35){\thicklines\circle*{2.5}}
\put(244,33){$\xi\in\mathcal N(\Lambda)$}

\put(-100,10){\thicklines\circle*{2.5}}

\put(-85,10){\thicklines\circle*{2.5}}
\put(-64,10){\thicklines\circle*{2.5}}
\put(-54,10){\thicklines\circle*{2.5}}
\put(-32,10){\thicklines\circle*{2.5}}
\put(-12,10){\thicklines\circle*{2.5}}
\put(12,10){\thicklines\circle*{2.5}}
\put(24,10){\thicklines\circle*{2.5}}
\put(38,10){\thicklines\circle*{2.5}}
\put(52,10){\thicklines\circle*{2.5}}
\put(72,0){\thicklines\circle*{2.5}}
\put(100,10){\thicklines\circle*{2.5}}
\put(117,0){\thicklines\circle*{2.5}}
\put(138,10){\thicklines\circle*{2.5}}
\put(156,0){\thicklines\circle*{2.5}}
\put(172,0){\thicklines\circle*{2.5}}
\put(198,0){\thicklines\circle*{2.5}}
\put(216,0){\thicklines\circle*{2.5}}
\put(240,8){$\xi_1\in\mathcal N(\Lambda)$}
\put(240,-2){$\xi_0\in\mathcal N(\Lambda)$}

\put(-120,60){\line(1,0){350}}
\put(50,60){\line(0,1){50}}
\put(230,60){\thicklines\vector(1,0){.0001}}
\put(50,110){\thicklines\vector(0,1){.0001}}
\put(50,100){\thicklines\circle*{2.5}}
\put(230,64){$x$}
\put(43,97){$1$}
\put(54,102){$\theta(x)$}
\put(232,80){$\theta:\Lambda\to[0,1]$}
{\put(-116,100){\line(1,0){76}}
\qbezier(-40,100)(20,100)(60,85)
\qbezier(60,85)(130,60)(220,61)
}

\end{picture}
\caption{Illustration of the marked point process $\mathbb P^\theta$: at the top, we see the graph of a possible marking function $\theta$; in the middle, a possible configuration $\xi$ corresponding to the point process $\mathbb P$; at the bottom, possible associated mark $0$ and mark $1$ configurations $\xi_0$ and $\xi_1$ corresponding to $\mathbb P^\theta$.}
\label{figure: marked}
\end{center}
\end{figure}

	\medskip
	
	In the remaining part of this section, for the sake of simplicity, we will present our main results about these conditional ensembles only in the case where $\mathbb P$ is a DPP. We note however that most of our results hold  for more general point processes. The theorems stated below are thus special cases of more general results, stated in full generality and proved in later sections.

	\medskip 
	
	In the simplest case, we condition on the event that no points have mark $1$ (in other words, there are no observed particles). If this event has non-zero probability, then the resulting conditional point process, which we will denote as $\mathbb P^\theta_{|\emptyset}$, is defined in the classical sense, and configurations in this point process have support in $\Lambda\times\{0\}$. Hence, by omitting the marks, we can identify configurations in this point process with configurations on $\Lambda$, and identify $\mathbb P^\theta_{|\emptyset}$ with a point process on $\Lambda$. The following result about the point process transformation $\mathbb P\mapsto \mathbb P^\theta_{|\emptyset}$, which is part of the more general Theorem \ref{theorem:condempty} in Section \ref{section:2}, will be fundamental for our concerns.
	\begin{theorem}\label{theorem:intro1}
		Let $\mathbb P$ be the DPP with kernel $K$ of a locally trace class operator $\mathrm K$ and let $\theta:\Lambda\to[0,1]$ be measurable and such that \(\mathrm M_{\sqrt{\theta}+1_B}\mathrm K\mathrm M_{\sqrt{\theta}+1_{B}}\) is trace class for any bounded Borel set $B$, and \[\mathbb P^\theta(\xi_1(\Lambda)=0)=\det(1-\mathrm M_{\sqrt{\theta}}\mathrm K\mathrm M_{\sqrt{\theta}})>0.\] Then $\mathbb P^\theta_{|\emptyset}$ is also a DPP, defined by the kernel of the $L^2(\Lambda,\mu)$-operator 
		\begin{equation}\label{def:Ktheta}\mathrm M_{1-\theta}\mathrm K (1-\mathrm M_\theta \mathrm K)^{-1}.\end{equation}
	\end{theorem}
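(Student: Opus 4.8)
The plan is to identify $\mathbb P^\theta_{|\emptyset}$ through its average multiplicative functional. Since a simple point process on $\Lambda$ is determined by its Laplace functional, equivalently by $L[\cdot]$, it suffices to show that
\[
L_{|\emptyset}[\phi]:=\mathbb E_{\mathbb P^\theta_{|\emptyset}}\prod_{x\in\supp\,\xi}(1-\phi)(x)=\det\!\left(1-\mathrm M_{\sqrt\phi}\,\mathrm M_{1-\theta}\mathrm K(1-\mathrm M_\theta\mathrm K)^{-1}\mathrm M_{\sqrt\phi}\right)
\]
for every measurable $\phi:\Lambda\to[0,1]$ with bounded support, the right-hand side being precisely the average multiplicative functional of the DPP with correlation kernel of the operator \eqref{def:Ktheta}.

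First I would unravel the conditional ensemble. By the construction of $\mathbb P^\theta$ (Proposition~\ref{prop:marked}), the conditional probability of the event $\{\xi_1(\Lambda)=0\}$ given a configuration $\xi\sim\mathbb P$ equals $\prod_{x\in\supp\,\xi}(1-\theta(x))$, and on this event the mark-$0$ configuration $\xi_0$ coincides with $\xi$. Hence, after forgetting marks, for every bounded measurable functional $F$ on $\mathcal N(\Lambda)$,
\[
\mathbb E_{\mathbb P^\theta_{|\emptyset}}[F(\xi)]=\frac{\mathbb E_{\mathbb P}\bigl[F(\xi)\prod_{x\in\supp\,\xi}(1-\theta(x))\bigr]}{\mathbb E_{\mathbb P}\bigl[\prod_{x\in\supp\,\xi}(1-\theta(x))\bigr]},
\]
with positive denominator by hypothesis. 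Taking $F(\xi)=\prod_{x\in\supp\,\xi}(1-\phi)(x)$ and putting $\psi:=1-(1-\phi)(1-\theta)=\theta+\phi(1-\theta)$, which is again $[0,1]$-valued, this exhibits $L_{|\emptyset}[\phi]$ as the ratio of the two expectations $\mathbb E_{\mathbb P}\prod_{x\in\supp\,\xi}(1-\psi)(x)$ and $\mathbb E_{\mathbb P}\prod_{x\in\supp\,\xi}(1-\theta)(x)$.

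Next I would identify numerator and denominator as Fredholm determinants. For test functions with bounded support this is \eqref{eq:LaplaceFredholm}, but $\theta$ and $\psi$ are in general not compactly supported, so I would first extend $\mathbb E_{\mathbb P}\prod_{x\in\supp\,\xi}(1-g)(x)=\det(1-\mathrm M_{\sqrt g}\mathrm K\mathrm M_{\sqrt g})$ to all measurable $g:\Lambda\to[0,1]$ with $\mathrm M_{\sqrt g}\mathrm K\mathrm M_{\sqrt g}$ trace class, by approximation with $g1_{B_n}$ along an exhaustion $B_n\uparrow\Lambda$ by bounded Borel sets: on the probabilistic side $\prod_{x\in\supp\,\xi}(1-g1_{B_n})(x)\downarrow\prod_{x\in\supp\,\xi}(1-g)(x)$ and dominated convergence gives convergence of the expectations, while on the analytic side $\mathrm M_{\sqrt{g1_{B_n}}}\mathrm K\mathrm M_{\sqrt{g1_{B_n}}}\to\mathrm M_{\sqrt g}\mathrm K\mathrm M_{\sqrt g}$ in trace norm, so the Fredholm determinants converge. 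The trace-class input comes from the hypothesis: if $B_0\supseteq\supp\,\phi$ then $\sqrt\psi\le 1_{B_0}+\sqrt\theta$, so $\mathrm M_{\sqrt\psi}\mathrm K\mathrm M_{\sqrt\psi}=\mathrm M_h\bigl(\mathrm M_{\sqrt\theta+1_{B_0}}\mathrm K\mathrm M_{\sqrt\theta+1_{B_0}}\bigr)\mathrm M_h$ with $h:=\sqrt\psi/(\sqrt\theta+1_{B_0})\in[0,1]$ (and likewise for the approximants, and for $\theta$ in place of $\psi$). This yields $\mathbb E_{\mathbb P}\prod_{x\in\supp\,\xi}(1-\psi)(x)=\det(1-\mathrm M_{\sqrt\psi}\mathrm K\mathrm M_{\sqrt\psi})$ and $\mathbb E_{\mathbb P}\prod_{x\in\supp\,\xi}(1-\theta)(x)=\det(1-\mathrm M_{\sqrt\theta}\mathrm K\mathrm M_{\sqrt\theta})$, in agreement with the stated value of $\mathbb P^\theta(\xi_1(\Lambda)=0)$.

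Finally comes the algebra. Using the cyclic identity $\det(1-AB)=\det(1-BA)$ the two determinants equal $\det(1-\mathrm M_\psi\mathrm K)$ and $\det(1-\mathrm M_\theta\mathrm K)$; since $\mathrm M_\psi=\mathrm M_\theta+\mathrm M_\phi\mathrm M_{1-\theta}$, and $1-\mathrm M_\theta\mathrm K$ is invertible because its symmetrization $1-\mathrm M_{\sqrt\theta}\mathrm K\mathrm M_{\sqrt\theta}$ has nonzero Fredholm determinant (and $1-AB$ is invertible iff $1-BA$ is), one has the factorization
\[
1-\mathrm M_\psi\mathrm K=(1-\mathrm M_\theta\mathrm K)\Bigl(1-(1-\mathrm M_\theta\mathrm K)^{-1}\mathrm M_\phi\mathrm M_{1-\theta}\mathrm K\Bigr).
\]
Taking determinants, using multiplicativity of the Fredholm determinant and two more applications of $\det(1-AB)=\det(1-BA)$,
\[
\frac{\det(1-\mathrm M_\psi\mathrm K)}{\det(1-\mathrm M_\theta\mathrm K)}=\det\!\Bigl(1-\mathrm M_\phi\,\mathrm M_{1-\theta}\mathrm K(1-\mathrm M_\theta\mathrm K)^{-1}\Bigr)=\det\!\Bigl(1-\mathrm M_{\sqrt\phi}\,\mathrm M_{1-\theta}\mathrm K(1-\mathrm M_\theta\mathrm K)^{-1}\mathrm M_{\sqrt\phi}\Bigr),
\]
which is the claimed formula for $L_{|\emptyset}[\phi]$; uniqueness of the Laplace functional then identifies $\mathbb P^\theta_{|\emptyset}$ with the DPP of kernel \eqref{def:Ktheta} (local trace class of \eqref{def:Ktheta} is checked separately by expanding $(1-\mathrm M_\theta\mathrm K)^{-1}=1+(1-\mathrm M_\theta\mathrm K)^{-1}\mathrm M_\theta\mathrm K$ and using the hypothesis). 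I expect the main obstacle to be not this algebra but the operator-theoretic bookkeeping accompanying it: verifying that every operator appearing — in particular $\mathrm M_{\sqrt\psi}\mathrm K\mathrm M_{\sqrt\psi}$ for the non-compactly-supported $\psi$, the operator \eqref{def:Ktheta}, and the intermediate products — is trace class, so that all the Fredholm determinants and the cyclicity and multiplicativity identities are legitimate, and handling the limit $B_n\uparrow\Lambda$ with care; this is exactly what the hypothesis on $\mathrm M_{\sqrt\theta+1_B}\mathrm K\mathrm M_{\sqrt\theta+1_B}$ is designed to make possible.
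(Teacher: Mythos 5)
Your proposal is correct and follows essentially the same route as the paper's proof of Theorem \ref{theorem:condempty}: express $L^\theta_{|\emptyset}[\phi]$ as the ratio $L[\psi]/L[\theta]$ with $\psi=\theta+\phi(1-\theta)$, justify the Fredholm-determinant identities by exhausting $\Lambda$ with bounded sets and using trace-norm convergence (your sandwich $\mathrm M_{\sqrt\psi}\mathrm K\mathrm M_{\sqrt\psi}=\mathrm M_h\left(\mathrm M_{\sqrt\theta+1_{B_0}}\mathrm K\mathrm M_{\sqrt\theta+1_{B_0}}\right)\mathrm M_h$ is a slightly slicker version of the paper's four-term decomposition), and then factor out $1-\mathrm M_\theta\mathrm K$ and cycle. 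The only point to tighten is that $\mathrm M_\theta\mathrm K$ and $\mathrm M_\psi\mathrm K$ need not themselves be trace class, so the unsymmetrized determinants $\det(1-\mathrm M_\theta\mathrm K)$, $\det(1-\mathrm M_\psi\mathrm K)$ and the multiplicativity step should be carried out for the truncated operators $\mathrm K_n=\mathrm M_{1_{B_n}}\mathrm K\mathrm M_{1_{B_n}}$, where everything is genuinely trace class, and then passed to the limit in symmetrized form --- exactly the order of operations in the paper, and one your exhaustion already provides.
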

	
\begin{remark}
If the locally trace-class operator $K$ is self-adjoint, it induces a DPP if and only if $0\leq \mathrm K\leq 1$ \cite{Soshnikov}. In this case, the condition that \(\mathrm M_{\sqrt{\theta}+1_B}\mathrm K\mathrm M_{\sqrt{\theta}+1_{B}}\) is trace class for any bounded Borel set $B$, is equivalent to
\(\int (\sqrt{\theta(x)}+1_B(x))^2K(x,x)  \mathrm d\mu(x)<\infty,\)
which is automatically satisfied whenever 
\(\int \theta(x) K(x,x)  d\mu(x)<\infty,\)
if $K(x,x)$ is locally integrable. The trace class condition is then practical to verify in concrete situations.
However, for non self-adjoint operators $\mathrm K$, ${\Tr\,}\mathrm K<\infty$ does not imply $\mathrm K$ being trace class, and then the condition that
\(\mathrm M_{\sqrt{\theta}+1_B}\mathrm K\mathrm M_{\sqrt{\theta}+1_{B}}\)
is trace class cannot be verified directly by computing a trace. In such cases, one rather tries to prove that an operator is a composition of Hilbert-Schmidt operators, to prove that it is trace class.
\end{remark}
	
	\begin{remark}
	Since \[\mathrm K(1-\mathrm M_\theta\mathrm K)^{-1}=(1-\mathrm K\mathrm M_\theta)^{-1}\mathrm K=\mathrm K+\mathrm K\mathrm M_{\sqrt\theta}(1-\mathrm M_{\sqrt\theta}\mathrm K\mathrm M_{\sqrt\theta})^{-1}\mathrm M_{\sqrt\theta}\mathrm K,\] the operator $\mathrm K(1-\mathrm M_\theta\mathrm K)^{-1}$ indeed exists provided that $\det(1-\mathrm M_{\sqrt{\theta}}\mathrm K\mathrm M_{\sqrt{\theta}})>0$. If $\mathrm K$ is self-adjoint, the operator \eqref{def:Ktheta} is in general not self-adjoint, however  the operator
		\[\mathrm M_{\sqrt{1-\theta}}\mathrm K (1-\mathrm M_\theta \mathrm K)^{-1}\mathrm M_{\sqrt{1-\theta}}\]
		is self-adjoint, and it is readily verified that this operator induces the same DPP $\mathbb P^\theta_{|\emptyset}$.
		If $\mathrm K$ is a projection, then it is easily seen that \eqref{def:Ktheta} is equal to the conjugation $(1-\mathrm M_\theta \mathrm K)\mathrm K (1-\mathrm M_\theta \mathrm K)^{-1}$ of $\mathrm K$.
	\end{remark}

	The probability to observe a given non-empty finite configuration of points in the marked point process will typically be zero, but we can still, $\mathbb P^\theta$-a.s., condition on such events by making use of disintegration and reduced Palm measures (see Section \ref{section:2} for details). 
	Given a mark $1$ configuration $\mathbf v=\{v_1,\ldots, v_m\}$, we will denote this conditional ensemble, which we will define properly in Section \ref{sec:24} below, as $\mathbb P^\theta_{|\mathbf v}$.
	Before stating our main result about $\mathbb P^\theta_{|\mathbf v}$ in the case where $\mathbb P$ is a DPP, we need to introduce the reduced Palm measure $\mathbb P_v$ of $\mathbb P$ associated to a point $v\in\Lambda$. This represents the conditional ensemble obtained by first conditioning $\mathbb P$ on the event $ v\in\supp\,\xi$, and then removing the point $v$ from the configuration. If $\mathbb P$ is the DPP with kernel $K$ and if $K(v,v)>0$, then \cite{ShiraiTakahashi} $\mathbb P_v$ is also a DPP, with kernel 
	\begin{equation}K_v(x,y)=\frac{\det\begin{pmatrix}K(x,y)&K(x,v)\\K(v,y)&K(v,v)\end{pmatrix}}{K(v,v)}.
	\end{equation}
	Similarly, we can condition $\mathbb P$ on the presence of a finite number of distinct points $\mathbf v=\{v_1,\ldots, v_m\}$. This is consistent in the sense that the reduced Palm measure $\mathbb P_\mathbf v=\mathbb P_{v_1,\ldots, v_m}$ is, for \(\mu^{\otimes m}\)-a.e. \(\mathbf v\in\Lambda^m\) such that \(\det(K(v_\ell,v_k))_{\ell,k=1}^m>0\), equal to the measure $\left(\left(\mathbb P_{v_1}\right)_{v_2}\ldots\right)_{v_m}$ obtained by iteratively conditioning on $v_1,\ldots, v_m$, for any chosen order of the points.
{Let us for notational convenience write $K(\mathbf v,\mathbf v)$ for the $m\times m$ matrix $\begin{pmatrix}
		K(v_\ell,v_k)
	\end{pmatrix}_{\ell,k=1}^m$, $K(x,\mathbf v)$ for the row vector$\left(K(x,v_k)\right)_{k=1}^m$, and $K(\mathbf v,y)$	
for the column vector $\left(K(v_\ell,y)\right)_{\ell=1}^m$.
	If $\mathbb P$ is a DPP with kernel $K$ and if $\det K(\mathbf v,\mathbf v)>0$, then $\mathbb P_\mathbf v
	$ is the DPP with kernel given by  
	\begin{align}\label{def:Kv}
		K_\mathbf v(x,y)&=\frac{\det\begin{pmatrix}
				K(x,y) & K(x,\mathbf v) \\ K(\mathbf v,y) & K(\mathbf v,\mathbf v)
		\end{pmatrix}}{\det K(\mathbf v,\mathbf v)},
	\end{align}
	which defines a finite rank perturbation of \(\mathrm K\). Let us also set for consistency the convention that when \(\mathbf v=\emptyset\), \(\mathbb P_\emptyset=\mathbb P\) and \( K_\emptyset=K\). }
	
	\medskip
	
	In analogy to and as a generalization of Theorem \ref{theorem:intro1}, we have the following result, which is part of the more general Theorem \ref{theorem:condnonempty} below.
	\begin{theorem}\label{theorem:intro3}
		If $\mathbb P$ is the DPP with locally trace class operator $\mathrm K$ and \(\theta\in L^\infty(\Lambda,\mu)\) {is} such that \(\mathrm M_{\sqrt{\theta}+1_B}\mathrm K\mathrm M_{\sqrt{\theta}+1_{B}}\) is trace class for any bounded Borel set $B$, then for $\mathbb P^\theta$-a.e.\ $\xi_1$, writing $\mathbf v=\supp\,\xi_1$, we have 
		$\det\left(1-\mathrm M_{\sqrt\theta} \mathrm K_{\mathbf v}\mathrm M_{\sqrt\theta}\right)\neq 0$, and
		$\mathbb P^\theta_{|\bf v}$ is also a DPP, defined by the $L^2(\Lambda,\mu)$-operator 
		\begin{equation}\label{def:Kvtheta}
			\mathrm M_{1-\theta}\mathrm K_{\mathbf v}\left(1-\mathrm M_\theta \mathrm K_{\mathbf v}\right)^{-1}.
		\end{equation}
	\end{theorem}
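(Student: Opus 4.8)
The plan is to reduce Theorem \ref{theorem:intro3} to Theorem \ref{theorem:intro1}. The bridge is the identity, valid for $\mathbb P^\theta$-a.e.\ $\xi_1$ with $\mathbf v:=\supp\,\xi_1$,
\begin{equation*}
\mathbb P^\theta_{|\mathbf v}=\bigl(\mathbb P_{\mathbf v}\bigr)^\theta_{|\emptyset},
\end{equation*}
i.e.\ conditioning the marked process on the observed configuration $\mathbf v$ and recording the law of the mark-$0$ points coincides with first forming the reduced Palm measure $\mathbb P_{\mathbf v}$ of the ground process and then conditioning it on all of its points receiving the mark $0$. The reason this should hold is that, since the marks are drawn independently of $\xi$ and of one another, conditioning $\mathbb P^\theta$ on ``the mark-$1$ configuration is $\mathbf v$'' amounts to conditioning on the three independent pieces of information ``$\mathbf v\subseteq\supp\,\xi$'', ``the points of $\mathbf v$ get mark $1$'' (a factor $\prod_i\theta(v_i)$ that drops out of the conditioning), and ``every other point of $\xi$ gets mark $0$''; the first piece turns $\mathbb P$ into $\mathbb P_{\mathbf v}$ and the third turns $\mathbb P_{\mathbf v}$ into $(\mathbb P_{\mathbf v})^\theta_{|\emptyset}$. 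For a finite observed configuration $\mathbf v=\{v_1,\dots,v_m\}$ I would make this precise from the definition of $\mathbb P^\theta_{|\mathbf v}$ in Section \ref{sec:24}, using the iterative description $\mathbb P_{\mathbf v}=((\mathbb P_{v_1})_{v_2}\cdots)_{v_m}$ and the elementary fact that the reduced Palm measure of $\mathbb P^\theta$ at a point $(v,1)\in\Lambda\times\{0,1\}$ is the $\theta$-marked process $(\mathbb P_{v})^\theta$; the case where $\supp\,\xi_1$ is not finite is absorbed into the disintegration formalism of Section \ref{section:2}.

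Granting this, it remains to check that $\mathbb P_{\mathbf v}$ satisfies the hypotheses of Theorem \ref{theorem:intro1} for the same $\theta$. By \cite{ShiraiTakahashi} and \eqref{def:Kv}, $\mathbb P_{\mathbf v}$ is the DPP with kernel $K_{\mathbf v}$, and for finite $\mathbf v$ the operator $\mathrm K_{\mathbf v}$ is a rank-$\le m$ perturbation of $\mathrm K$, say $\mathrm K_{\mathbf v}=\mathrm K+\mathrm F$. Hence $\mathrm M_{1_B}\mathrm K_{\mathbf v}\mathrm M_{1_B}=\mathrm M_{1_B}\mathrm K\mathrm M_{1_B}+\mathrm M_{1_B}\mathrm F\mathrm M_{1_B}$ is trace class for every bounded Borel $B$ (a trace class operator plus a finite-rank one), so $\mathrm K_{\mathbf v}$ is locally trace class; and since $\theta\in L^\infty(\Lambda,\mu)$ the multiplication operator $\mathrm M_{\sqrt\theta+1_B}$ is bounded, so the same decomposition and the corresponding hypothesis on $\mathrm K$ show that $\mathrm M_{\sqrt\theta+1_B}\mathrm K_{\mathbf v}\mathrm M_{\sqrt\theta+1_B}$ is trace class for every bounded Borel $B$. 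The last point to check is the non-degeneracy $\det\bigl(1-\mathrm M_{\sqrt\theta}\mathrm K_{\mathbf v}\mathrm M_{\sqrt\theta}\bigr)\ne 0$, which is also asserted in the statement: since $\mathrm K_{\mathbf v}$ is locally trace class, \eqref{eq:LaplaceFredholm} identifies this Fredholm determinant with $(\mathbb P_{\mathbf v})^\theta(\xi_1(\Lambda)=0)$, a non-negative real number, so only strict positivity is at stake; I would read it off from the disintegration set up in the previous step, in which precisely this determinant appears as the total mass normalizing the conditional law $\mathbb P^\theta_{|\mathbf v}$, which is a genuine probability measure for $\mathbb P^\theta$-a.e.\ $\xi_1$.

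With the hypotheses in place, Theorem \ref{theorem:intro1} applied to the DPP $\mathbb P_{\mathbf v}$ with kernel $K_{\mathbf v}$ and marking function $\theta$ shows that $(\mathbb P_{\mathbf v})^\theta_{|\emptyset}$ is the DPP with kernel of $\mathrm M_{1-\theta}\mathrm K_{\mathbf v}(1-\mathrm M_\theta\mathrm K_{\mathbf v})^{-1}$, which is \eqref{def:Kvtheta}; combined with the identity $\mathbb P^\theta_{|\mathbf v}=(\mathbb P_{\mathbf v})^\theta_{|\emptyset}$ this yields the theorem. I expect the main obstacle to be the first step: proving the commutation identity rigorously and uniformly in $\xi_1$, in particular handling the ``no further mark-$1$ point'' part of the conditioning and, when $\supp\,\xi_1$ is infinite, making sense of $\mathbb P_{\mathbf v}$ and $K_{\mathbf v}$ as limits of their finite-$\mathbf v$ counterparts, together with the bookkeeping of the disintegration's normalizing constant needed to conclude $\det(1-\mathrm M_{\sqrt\theta}\mathrm K_{\mathbf v}\mathrm M_{\sqrt\theta})>0$ for $\mathbb P^\theta$-a.e.\ $\xi_1$; the operator-theoretic checks are routine once the finite-rank structure of $\mathrm K_{\mathbf v}-\mathrm K$ is available.
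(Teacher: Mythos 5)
Your proposal follows essentially the same route as the paper: the paper's Theorem \ref{theorem:condnonempty} proves, via the Palm disintegration of the marked process conditioned on $\xi_1(\Lambda)=m$ and a computation of average multiplicative functionals, exactly your bridge identity $\mathbb P^\theta_{|\mathbf v}=(\mathbb P_{\mathbf v})^\theta_{|\emptyset}$ (with the positivity of $\det(1-\mathrm M_{\sqrt\theta}\mathrm K_{\mathbf v}\mathrm M_{\sqrt\theta})=L_{\mathbf v}[\theta]$ read off, as you suggest, from the normalization $j_1^{\theta,\Lambda}(\mathbf v)=\rho(\mathbf v)L_{\mathbf v}[\theta]>0$ a.e.), and then deduces the kernel formula by applying the empty-conditioning result (your Theorem \ref{theorem:intro1}) to the Palm kernel $\mathrm K_{\mathbf v}$, a finite-rank perturbation of $\mathrm K$. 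Your worry about infinite observed configurations is moot here, since the trace class hypothesis forces $\xi_1(\Lambda)<\infty$ a.s., so the argument is correct and matches the paper's.
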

	\begin{remark}
		This result implies that the class of DPPs is stable under the transformation $\mathbb P\mapsto \mathbb P^\theta_{|\mathbf v}$. More is actually true: as we will see, each of the subclasses of DPPs defined in Examples \ref{Example-OPE}--\ref{example:integrable} are also stable, and in Assumptions \ref{regularity assumptions} below, we will define a larger class of (not necessarily determinantal) point processes which is stable under this transformation.
	\end{remark}
	Section \ref{section:2} will be devoted to the rigorous construction of the marked and conditional point processes $\mathbb P^\theta, \mathbb P^\theta_{|\emptyset}, \mathbb P^\theta_{|\bf v}$, and to the proofs of (generalizations of) the results stated above.
	
	\medskip

	We should note that in the case where
	$\theta$ is the indicator function of a subset of $\Lambda$, all the above results are well-known, see e.g.\ \cite{BorodinSoshnikov, Bufetov, Bufetov3}.

	\medskip
	
\subsection{Rigidity}	In Section \ref{section:3}, we will study 
	conditional ensembles corresponding to infinite configurations of mark $1$ points $\delta_{\mathbf v}:=\sum_j\delta_{v_j}\in\mathcal N(\Lambda)$. In such cases, the disintegration theorem implies that one can still define $\mathbb P^\theta_{|\mathbf v}$.
	If $B\in \mathcal B_\Lambda$ is bounded and if $\theta=1_{B^c}$ is the indicator function of the complement of $B$, then $\mathbb P^\theta_{|\bf v}=\mathbb P^{1_{B^c}}_{|\bf v}$ is connected to the notion of number rigidity in the following manner. A point process $\mathbb P$ is said to be (number) rigid if for any bounded $B\in \mathcal B_\Lambda$, the conditional ensemble
	$\mathbb P^{1_{B^c}}_{|\bf v}$ has for $\mathbb P_1^{1_{B^c}}$-a.e.\ $\delta_{\mathbf v}$ a deterministic number of points, or in other words if there exists a  \(\mathcal C(\Lambda)\)-measurable function \[\ell:\mathcal N(\Lambda)\to\mathbb N\cup\{0,{\infty}\}:\delta_{\mathbf v} \mapsto\ell_{\mathbf v}\quad\mbox{ such that }\quad\mathbb P^{1_{B^c}}_{|\bf v}\left(\xi(B)=\ell_\mathbf v\right)=1.\]
	This property is trivially satisfied for DPPs defined by kernels of finite rank orthogonal projections, since the number of particles in these DPPs is deterministic.
	Remarkably, a wide class of DPPs defined by kernels of infinite rank locally trace class orthogonal projections are also known to be number rigid
 \cite{Ghosh, GhoshPeres, Bufetov4}. Conversely, it is known \cite{GhoshKrishnapur} that a DPP can only be number rigid if it is defined by a projection.
	The construction of marked and conditional ensembles naturally suggests the following stronger notion of rigidity, which requires that given a.e.\ configuration of mark $1$ points,  the number of mark $0$ point is deterministic.
	\begin{definition}\label{def:rigidity}
		A point process $\mathbb P$ is {\em marking rigid} if 
		for any Borel measurable $\theta:\Lambda\to[0,1]$,
		there exists a Borel measurable function \[\ell:\mathcal N(\Lambda)\to \mathbb N\cup\{0,\infty\}:\delta_{\mathbf v}=\sum_{i}\delta_{v_i}\mapsto \ell_{v_1,v_2,\ldots}=:\ell_{\mathbf v}\] such that the following holds:  for $\mathbb P_1^\theta$-a.e.\ $\delta_{\mathbf v}$, \[\mathbb P^{\theta}_{|\mathbf v}\left(\xi(\Lambda)=\ell_{\mathbf v}\right)=1.\]
	\end{definition}
Here $\xi(\Lambda)$ denotes the number of points of a random configuration $\xi$ in the set $\Lambda$.

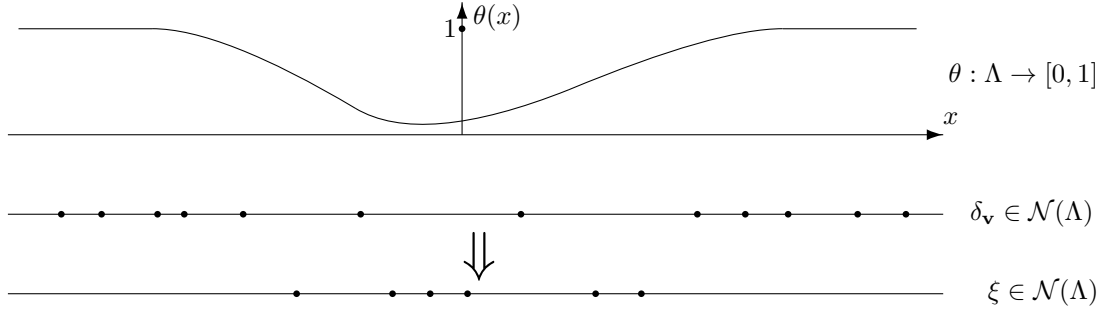
\begin{figure}[t]
\begin{center}
\begin{picture}(126,110)(20,0)
  
\put(-120,30){\line(1,0){350}}
\put(-120,0){\line(1,0){350}}

\put(-100,30){\thicklines\circle*{2.5}}

\put(-85,30){\thicklines\circle*{2.5}}
\put(-64,30){\thicklines\circle*{2.5}}
\put(-54,30){\thicklines\circle*{2.5}}
\put(-32,30){\thicklines\circle*{2.5}}
\put(-12,0){\thicklines\circle*{2.5}}
\put(12,30){\thicklines\circle*{2.5}}
\put(24,0){\thicklines\circle*{2.5}}
\put(38,0){\thicklines\circle*{2.5}}
\put(52,0){\thicklines\circle*{2.5}}
\put(72,30){\thicklines\circle*{2.5}}
\put(100,0){\thicklines\circle*{2.5}}
\put(117,0){\thicklines\circle*{2.5}}
\put(138,30){\thicklines\circle*{2.5}}
\put(156,30){\thicklines\circle*{2.5}}
\put(172,30){\thicklines\circle*{2.5}}
\put(198,30){\thicklines\circle*{2.5}}
\put(216,30){\thicklines\circle*{2.5}}
\put(240,28){$\delta_{\mathbf v}\in\mathcal N(\Lambda)$}
\put(240,-2){\ \ $\xi\in\mathcal N(\Lambda)$}

\put(50,9){\Huge{$\Downarrow$}}
\put(-120,60){\line(1,0){350}}
\put(50,60){\line(0,1){50}}
\put(230,60){\thicklines\vector(1,0){.0001}}
\put(50,110){\thicklines\vector(0,1){.0001}}
\put(50,100){\thicklines\circle*{2.5}}
\put(230,64){$x$}
\put(43,97){$1$}
\put(54,102){$\theta(x)$}
\put(232,80){$\theta:\Lambda\to[0,1]$}
{\put(-116,100){\line(1,0){50}}
\qbezier(-66,100)(-40,100)(10,70)
\qbezier(90,77)(145,100)(170,100)
\qbezier(10,70)(35,55)(90,77)

\put(170,100){\line(1,0){50}}
}

\end{picture}
\caption{Illustration of marking rigidity: at the top, we see the graph of a possible marking function $\theta$; at the bottom, a possible configuration of observed points $\delta_{\mathbf v}$ and a possible configuration $\xi$ in the conditional ensemble $\mathbb P^\theta_{|\mathbf v}$. If $\mathbb P$ is marking rigid, then the marking function $\theta$ and the observed configuration $\delta_\mathbf v$ a.s.\ determine the number of points ($6$ in the picture) in the unobserved configuration $\xi$.}
\label{figure: rigidity}
\end{center}
\end{figure}	
	
	The following result is a special case of Theorem \ref{theorem:rigidity}.
	\begin{theorem}\label{theorem:intro4}
		Let $\mathbb P$ 
		be a DPP induced by a locally trace class orthogonal projection $\mathrm K$ such that the following holds: for any $\epsilon>0$ and for any bounded $B\in\mathcal B_\Lambda$, there exists a bounded measurable function $f:\Lambda\to[0,+\infty)$ with bounded support such that \[\left.f\right|_{B}=1,\qquad {\rm Var }{\int_\Lambda f\dx\xi}<\epsilon,\]
		where ${\rm Var}$ denotes the variance with respect to $\mathbb P$.
		Then, $\mathbb P$ is marking rigid. 
	\end{theorem}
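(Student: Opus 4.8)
The hypothesis on $\mathbb P$ is exactly the Ghosh--Peres variance criterion for number rigidity, and the plan is to transfer it --- through the disintegration defining the conditional ensembles --- to each $\mathbb P^\theta_{|\mathbf v}$, to conclude that $\mathbb P^\theta_{|\mathbf v}$ is number rigid for $\mathbb P^\theta_1$-a.e.\ $\delta_{\mathbf v}$, and then to invoke the determinantal structure (a number rigid DPP is induced by an orthogonal projection) to promote number rigidity of $\mathbb P^\theta_{|\mathbf v}$ to the assertion that its \emph{total} number of points is deterministic.

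First I would record a variance transfer. Under $\mathbb P^\theta$ the unmarked configuration $\xi=\xi_0+\xi_1$ has law $\mathbb P$ (Proposition~\ref{prop:marked}), so $\mathrm{Var}_{\mathbb P^\theta}(\int_\Lambda f\,\mathrm d\xi)=\mathrm{Var}_{\mathbb P}(\int_\Lambda f\,\mathrm d\xi)$ for every bounded $f\colon\Lambda\to[0,\infty)$ with bounded support. Conditioning on $\xi_1=\delta_{\mathbf v}$ turns $\int f\,\mathrm d\xi$ into $\int f\,\mathrm d\xi_0$ plus a deterministic constant, while the conditional law of $\xi_0$ is $\mathbb P^\theta_{|\mathbf v}$; the law of total variance then gives
\begin{equation*}
\int_{\mathcal N(\Lambda)}\mathrm{Var}_{\mathbb P^\theta_{|\mathbf v}}\!\left(\int_\Lambda f\,\mathrm d\xi\right)\mathrm d\mathbb P^\theta_1(\delta_{\mathbf v})=\mathbb E_{\mathbb P^\theta}\bigl[\mathrm{Var}(\textstyle\int_\Lambda f\,\mathrm d\xi\mid\xi_1)\bigr]\le\mathrm{Var}_{\mathbb P}\!\left(\int_\Lambda f\,\mathrm d\xi\right).
\end{equation*}
Fixing an exhaustion $B_1\subset B_2\subset\cdots$ of $\Lambda$ by bounded Borel sets and, for each $n,k\in\N$, choosing (from the hypothesis) $f_{n,k}$ with $f_{n,k}|_{B_n}=1$, bounded support and $\mathrm{Var}_{\mathbb P}(\int f_{n,k}\,\mathrm d\xi)<2^{-n-k}$, summation and Tonelli's theorem yield $\sum_{n,k}\mathrm{Var}_{\mathbb P^\theta_{|\mathbf v}}(\int f_{n,k}\,\mathrm d\xi)<\infty$ for $\mathbb P^\theta_1$-a.e.\ $\delta_{\mathbf v}$; an analogous first-moment estimate makes each $\int f_{n,k}\,\mathrm d\xi$ lie in $L^2(\mathbb P^\theta_{|\mathbf v})$ for a.e.\ $\delta_{\mathbf v}$. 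Hence, off a $\mathbb P^\theta_1$-null set, for every $n$ we have $\mathrm{Var}_{\mathbb P^\theta_{|\mathbf v}}(\int f_{n,k}\,\mathrm d\xi)\to 0$ as $k\to\infty$.

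For such a $\delta_{\mathbf v}$, the Ghosh--Peres argument applies to $\mathbb P^\theta_{|\mathbf v}$: writing $\int f_{n,k}\,\mathrm d\xi=\xi(B_n)+\int_{B_n^c}f_{n,k}\,\mathrm d\xi$, the vanishing variance shows that $\xi(B_n)$ is the $L^2(\mathbb P^\theta_{|\mathbf v})$-limit as $k\to\infty$ of $\mathbb E_{\mathbb P^\theta_{|\mathbf v}}[\int f_{n,k}\,\mathrm d\xi]-\int_{B_n^c}f_{n,k}\,\mathrm d\xi$, which is $\sigma(\xi|_{B_n^c})$-measurable; thus $\xi(B_n)$ is $\sigma(\xi|_{B_n^c})$-measurable modulo $\mathbb P^\theta_{|\mathbf v}$-null sets. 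As every bounded Borel $B$ lies in some $B_n$ and $\xi(B)=\xi(B_n)-\xi(B_n\setminus B)$ with $\xi(B_n\setminus B)$ also $\sigma(\xi|_{B^c})$-measurable, $\mathbb P^\theta_{|\mathbf v}$ is number rigid (this is the argument of \cite{GhoshPeres}).

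It remains to upgrade this to a deterministic total count. By Theorem~\ref{theorem:intro3} (and its extension to infinite $\mathbf v$ in Section~\ref{section:3}), $\mathbb P^\theta_{|\mathbf v}$ is a DPP defined by the self-adjoint, locally trace class operator $S_{\mathbf v}=\mathrm M_{\sqrt{1-\theta}}\mathrm K_{\mathbf v}(1-\mathrm M_\theta\mathrm K_{\mathbf v})^{-1}\mathrm M_{\sqrt{1-\theta}}$ with $0\le S_{\mathbf v}\le 1$ (Remark after Theorem~\ref{theorem:intro1}); being number rigid, $S_{\mathbf v}$ is an orthogonal projection by \cite{GhoshKrishnapur} (for finite $\mathbf v$ this is direct: $\mathrm K_{\mathbf v}$ is then a projection and the identity $\mathrm M_{1-\theta}\mathrm K_{\mathbf v}=(1-\mathrm M_\theta\mathrm K_{\mathbf v})\mathrm K_{\mathbf v}$ forces $S_{\mathbf v}^2=S_{\mathbf v}$). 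By \cite{Soshnikov}, $\xi(\Lambda)=\mathrm{rank}\,S_{\mathbf v}$ a.s.\ when this rank is finite; when it is infinite, $\mathbb E_{\mathbb P^\theta_{|\mathbf v}}\xi(B_n)=\int_{B_n}S_{\mathbf v}(x,x)\,\mathrm d\mu(x)\uparrow\Tr\,S_{\mathbf v}=\infty$ and $\mathrm{Var}_{\mathbb P^\theta_{|\mathbf v}}\xi(B_n)\le\mathbb E_{\mathbb P^\theta_{|\mathbf v}}\xi(B_n)$, so $\xi(B_n)\to\infty$ in probability and $\xi(\Lambda)=\infty$ a.s. Either way $\xi(\Lambda)$ is $\mathbb P^\theta_{|\mathbf v}$-a.s.\ the deterministic value $\ell_{\mathbf v}:=\mathrm{rank}\,S_{\mathbf v}\in\N\cup\{0,\infty\}$; since $\ell_{\mathbf v}$ is the essential value of $\xi(\Lambda)$ under $\mathbb P^\theta_{|\mathbf v}$ and $\delta_{\mathbf v}\mapsto\mathbb P^\theta_{|\mathbf v}(\xi(\Lambda)=j)$ is measurable for each $j$, the map $\delta_{\mathbf v}\mapsto\ell_{\mathbf v}$ is Borel, and extending it arbitrarily on the remaining null set proves marking rigidity. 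The delicate point, I expect, is precisely the appeal to Theorem~\ref{theorem:intro3} for infinite $\mathbf v$: one must ensure that $\mathbb P^\theta_{|\mathbf v}$ is still a genuine DPP governed by a self-adjoint operator --- i.e.\ control the iterated reduced Palm kernel $\mathrm K_{\mathbf v}$ along an infinite configuration --- for $\mathbb P^\theta_1$-a.e.\ $\delta_{\mathbf v}$, which is where the regularity behind the variance hypothesis (beyond the formal manipulations above) really enters; for finite $\mathbf v$ everything is immediate from Theorem~\ref{theorem:intro3} and the ensuing Remarks.
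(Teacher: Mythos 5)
Your argument is essentially sound, but it takes a genuinely different (and heavier) route than the paper. The paper never proves number rigidity of the conditional ensembles and never invokes the Ghosh--Krishnapur implication ``number rigid DPP $\Rightarrow$ projection kernel''. Instead, it first isolates a zero--one law, $\mathbb P^\theta(\xi_0(\Lambda)<\infty)\in\{0,1\}$, which for projection DPPs follows from Soshnikov's theorem applied to $\mathrm M_{1-\theta}\mathrm K$; in the infinite case $\ell_{\mathbf v}=\infty$ trivially, and in the finite case it works directly at the level of the marked process: choosing $f_n$ with $f_n|_{\Lambda_n}=1$ and ${\rm Var}\int_\Lambda f_n\dx\xi<\epsilon_n\to 0$, Chebyshev's inequality gives a subsequence along which $\int_\Lambda f_{n_j}\dx\xi-\mathbb E\int_\Lambda f_{n_j}\dx\xi\to 0$ a.s., and since eventually ${\rm supp}\,\xi_0\subset\Lambda_{n_j}$ one obtains the a.s.\ identity $\xi_0(\Lambda)=\lim_j\bigl(\mathbb E\int_\Lambda f_{n_j}\dx\xi-\int_\Lambda f_{n_j}\dx\xi_1\bigr)$, an explicit functional of $\xi_1$ alone; a single disintegration then finishes. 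That argument is more elementary, yields an explicit formula for $\ell_{\mathbf v}$, and actually proves marking rigidity for any (not necessarily determinantal) process satisfying the variance hypothesis together with the zero--one law. Your route --- transferring the variance bound to a.e.\ conditional ensemble by the law of total variance, running Ghosh--Peres inside each $\mathbb P^\theta_{|\mathbf v}$, then upgrading via ``rigid $\Rightarrow$ projection $\Rightarrow$ deterministic count'' --- is conceptually appealing and avoids the global zero--one law, but it outsources the decisive step to the Ghosh--Krishnapur converse applied to the conditional kernels, a substantially stronger external input than anything the paper uses.

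Two smaller points. First, your diagnosis of where the delicacy lies is off: that $\mathbb P^\theta_{|\mathbf v}$ is a DPP with a Hermitian locally trace class kernel for $\mathbb P^\theta_1$-a.e.\ (possibly infinite) $\delta_{\mathbf v}$ is Proposition \ref{prop:conditionalinf}(2), proved not by iterating reduced Palm kernels along an infinite configuration but by applying a lemma of Bufetov--Qiu--Shamov to the marked process; the variance hypothesis plays no role there, and it enters exactly where your formal computation places it. Second, your explicit formula $S_{\mathbf v}=\mathrm M_{\sqrt{1-\theta}}\mathrm K_{\mathbf v}(1-\mathrm M_\theta\mathrm K_{\mathbf v})^{-1}\mathrm M_{\sqrt{1-\theta}}$ only makes sense for finite $\mathbf v$; for infinite $\mathbf v$ you must argue with the abstract operator furnished by Proposition \ref{prop:conditionalinf}(2), which is in fact all your argument requires.
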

	\begin{remark}
		It is well-known that the existence of a function $f$ as in the above statement for any bounded $B\in\mathcal B_\Lambda$ and $\epsilon>0$, implies number rigidity of the point process $\mathbb P$ \cite{Ghosh, GhoshPeres}, and it is also known that such $f$ exists if $\mathbb P$ is a DPP with sufficiently regular $2$-integrable kernel defining an orthogonal projection, such as the sine, Airy, and Bessel point processes \cite{Bufetov4}. We thus prove that these point processes are marking rigid.
	\end{remark}
	\begin{remark}
		The above result is trivial for DPPs induced by finite rank orthogonal projections, which a.s.\ have a deterministic number of points. For DPPs associated to infinite rank orthogonal projections, which have a.s.\ configurations with an infinite number of points, it is striking that the observation of a random (possibly infinite) part of a configuration determines a.s.\ the number of unobserved points. 
	\end{remark}
	
	\medskip
	{
\subsection{Orthogonal polynomial ensembles}	
	
	In Section \ref{section:4}, we will focus on the OPEs from Example \ref{Example-OPE}, and we will show that conditional ensembles of OPEs are also OPEs, but with a deformed weight function, see Proposition \ref{prop:OPE}.
	As a consequence, for $\Lambda=\mathbb R$, we show that a large class of OPEs on the real line, which are eigenvalue distributions of unitarily invariant Hermitian random matrices, are in fact conditional ensembles of the GUE. We also give explicit expressions for the marginal distribution of the mark $0$ points, given the number of mark $1$ points. These are in general not DPPs, but do have a special structure involving Hankel determinants.
	
%	We will also show that interesting probabilistic quantities can be expressed explicitly as integrals of Toeplitz or Hankel determinants.
	
	\medskip
	
\subsection{DPPs with integrable kernels and Riemann-Hilbert problems}	
	
	In Section \ref{section:5}, we will consider DPPs associated to integrable kernels. We will show how we can characterize the kernels of the associated conditional ensembles in terms of Riemann-Hilbert problems via the IIKS method, and explain how this opens the door for asymptotic analysis and for deriving integrable differential equations associated to the conditional ensembles $\mathbb P^\theta_{|\mathbf v}$. We will also be able to interpret Jacobi's identity for Fredholm determinants in terms of the conditional ensembles $\mathbb P^\theta_{|\emptyset}$.

	}

	\medskip

	\section{Construction of marked and conditional processes}
	\label{section:2}

	\subsection{Preliminaries}
	We consider a measurable space \(\left(\Lambda,\mathcal B_\Lambda\right)\), where \(\Lambda\) is a complete separable metric space and \(\mathcal B_\Lambda\) its Borel \(\sigma\)-algebra. We denote by \(\mathcal N(\Lambda)\) the 
	set of locally finite Borel counting measures on $\Lambda$, and by $\mathcal C(\Lambda)$
	the \(\sigma\)-algebra generated by cylinder sets of the form
	\begin{equation*}
		C=\bigcap_{i=1}^n\{\xi\in \mathcal N (\Lambda)\ :\ \xi(B_i)=k_i\},%\quad\quad B_i\in\mathcal B(\Lambda),\ k_i\in\mathbb N,
	\end{equation*}
	where \(B_1,\ldots,B_n\in\mathcal B_\Lambda\) are disjoint and \(n,k_1,\ldots, k_n\) are non-negative integers. Note that we can identify $\mathcal N(\Lambda)$ with the space of locally finite sets of points, counted with multiplicity. For configurations of distinct points, this means that we identify the counting measure $\xi$ with its support.
	We consider a point process $\mathbb P$ on $\Lambda$, i.e.\ a probability measure on the complete separable metric space \((\mathcal N(\Lambda),\mathcal C(\Lambda))\).
	
	\medskip
	
	For disjoint sets $B_1,\ldots, B_n\in\mathcal B_\Lambda$ and non negative integers $k_1,\ldots, k_n$ such that $\sum_{j=1}^{n}k_j=m$, the $m$-th {\em factorial moment measure} $M_m$ of \(\mathbb P\) is the symmetric measure on \(\Lambda^m\) given by
	\begin{equation}\label{factorialmoment}
		M_m(B_1^{k_1}\times\cdots\times B_n^{k_n})=\mathbb E\xi(B_1)^{[k_1]}\ldots\xi(B_n)^{[k_n]},\quad \mbox{with}\quad l^{[k]}=l(l-1)\cdots (l-k+1),
	\end{equation}
	if the average exists.
	Similarly, the $m$-th {\em Janossy measure} of \(\mathbb P\) (encoding its finite dimensional distributions) associated to $B\in\mathcal B_\Lambda$ is the symmetric measure on \(B^m\) given by
	\begin{equation*}
		J_m^B(B_1^{k_1}\times\cdots\times B_n^{k_n})=k_1!\cdots k_n!\mathbb P\big(\xi(B)=m,\ \xi(B_j)=k_j\mbox{ for } j=1,...,n\big),
	\end{equation*}
	where \(\sum_{j=1}^nk_j=m\) and \(\sqcup_{j=1}^nB_j=B\). 
	
	\medskip
	
	Throughout this section, we will impose the following regularity assumptions on the point process $\mathbb P$ on $\Lambda$.
	\(\)
	\begin{assumptions}\label{regularity assumptions}\(\)\\
		There exists a locally finite positive Borel measure \(\mu\) on \(\Lambda\) such that:
		\begin{enumerate}
			\item the point process $\mathbb P$ is simple, i.e.\ for \(\mu\)-a.e.\ \(x\in\Lambda\), \(\mathbb P(\xi(\{x\})\leq 1)=1\);
			\item $\mathbb P$ admits correlation functions of all orders, i.e.\ for any positive integer \(m\) there exists a (symmetric) locally integrable function \(\rho_m:\Lambda^m\to[0,+\infty)\) with respect to the measure $\mu^{\otimes m}$ on $\Lambda^m$ such that
			\begin{equation*}
				\dx M_m=\rho_m\mathrm d^m\mu ;%\rho_m\dx \mu^{\otimes m}=:\rho_m d^m\mu;
			\end{equation*}
			\item for any bounded \(B\in \mathcal B_\Lambda\), there exists \(\epsilon_B>0\) such that
			\begin{equation*}
				\sum_{m=1}^\infty\frac{(1+\epsilon_B)^m}{m!}M_m(B^m)<\infty.
			\end{equation*}
		\end{enumerate}
	\end{assumptions}
	Under these assumptions, it is a classical fact  \cite{Lenard, Soshnikov} that the correlation functions $\rho_m$ uniquely determine the point process $\mathbb P$. 
	We also have \cite{Macchi} that for every bounded \(B\in\mathcal B_\Lambda\), there exist locally integrable Janossy densities \(j_m^B:\Lambda^m\to [0,+\infty)\) such that \(\dx J_m^B=j_m^B\dx^m \mu\). Note that \(j^B_m\) is only defined on \(B^m\), however under Assumptions \ref{regularity assumptions}, we have the identity 
	\begin{equation}\label{Macchi relation 1}
		j^B(\mathbf{x})=\sum_{n=0}^\infty\frac{(-1)^n}{n!}\int_{B^n}\rho(\mathbf x\sqcup \mathbf y)\dx^n\mu(\mathbf y),
	\end{equation}	
	which allows to extend $j^B_m$ to \(\Lambda^m\), since the series converges in the space of locally integrable functions on $\Lambda^m$. Here we abbreviated
	\[j^B(\mathbf{x}):=j_m^B(x_1,\ldots, x_m),\qquad \rho(\mathbf{x}):=\rho_m(x_1,\ldots, x_m),\]
	because we interpret $\mathbf{x}$ either as a vector with $m$ components $x_1,\ldots, x_m$ or as a configuration $\{x_1,\ldots, x_m\}$ of $m$ (not necessarily distinct) points; $\rho(\mathbf{x}\sqcup\mathbf{y})$ then means
	$\rho_{m+n}(x_1,...,x_m,y_1,...,y_n)\) with $\mathbf x=(x_1,\ldots, x_m)$, $\mathbf y=(y_1,\ldots, y_n)$.
	This notation in which we neglect the order of the variables is justified because $\rho_m$ and $j_m^B$ are symmetric in their variables.	
	Moreover, if Assumptions \ref{regularity assumptions} (3) holds also globally, i.e.\ for $B=\Lambda$, we have	
	\begin{equation}\label{Macchi relation 2}	\rho(\mathbf{x})=\sum_{n=0}^\infty\frac{1}{n!}\int_{\Lambda^n}j^\Lambda(\mathbf x\sqcup \mathbf y)\dx^n\mu(\mathbf y).
	\end{equation}
	The above formulas continue to hold for \(m=0\) by adopting the conventions 
	\[\Lambda^0=B^0=\{\emptyset\},\quad 
	J^B_0(\emptyset)=j^B(\emptyset)=\mathbb P(\xi(B)=0),\quad M_0(\emptyset)=\rho(\emptyset)=1,\quad \mu^{\otimes 0}=\delta_\emptyset.
	\]
	
	\medskip
	
	{Let us note first that the Poisson point process with locally bounded intensity $\rho:\Lambda\to[0,+\infty)$ on $(\Lambda,\mu)$ satisfies Assumptions \ref{regularity assumptions} if $\mu$ is non-atomic, with correlation functions given by \eqref{def:Poissoncorrelation}.}
	Our interest goes in particular to DPPs, characterized by the kernel \(K:\Lambda^2\to\mathbb C\) of a locally trace class operator \(\mathrm K\) on \(L^2(\Lambda,\mu)\). These point processes are simple \cite{Soshnikov}, and the correlation functions are locally integrable and given by 
	\[
	\rho(x_1,\ldots,x_m)=\det\begin{pmatrix}
		K(x_j,x_k)
	\end{pmatrix}_{j,k=1}^m.
	\]
	The average multiplicative functional is a Fredholm determinant, recall \eqref{eq:LaplaceFredholm0}--\eqref{eq:LaplaceFredholm}.
	In particular, by \eqref{def:Fredholm}, we have for any \(\epsilon>0\) and bounded \(B\in\mathcal B_\Lambda\) that
	\[\sum_{m=0}^\infty\frac{(1+\epsilon)^m}{m!}M_m(B^m)=\det(1+(1+\epsilon)\mathrm M_{1_B} \mathrm K \mathrm M_{1_B})<\infty.\]
	Hence, we can conclude that Assumptions \ref{regularity assumptions} are satisfied when $\mathbb P$ is a DPP induced by a locally trace class operator $\mathrm K$.
	
	\medskip

	\subsection{Bernoulli marking}
	
	Given a point process satisfying Assumptions \ref{regularity assumptions} and a measurable function $\theta:\Lambda\to[0,1]$, we now construct a marked point process $\mathbb P^\theta$ on $\Lambda\times\{0,1\}$, by assigning to each point $x\in \Lambda$ independently a random Bernoulli variable which takes the value $1$ with probability $\theta(x)$, and the value $0$ with probability $1-\theta(x)$. %Writing $\theta_1(x)=\theta(x)$, $\theta_0(x)=1-\theta(x)$, 
	Let us define the measures \(\nu_x^\theta\) and $\mu^\theta$ respectively on \(\{0,1\}\) and $\Lambda_{\{0,1\}}:=\Lambda\times\{0,1\}$ as
	\begin{equation}\label{mutheta}
		\nu_x^\theta=(1-\theta(x))\delta_0+\theta(x)\delta_1,\quad\quad\quad\dx\mu^{\theta}(x;b)=\dx\nu_x^\theta(b)\dx\mu(x),\qquad x\in\Lambda,\ b\in\{0,1\}.
	\end{equation}

	This marked point process $\mathbb P^\theta$ satisfies Assumptions \ref{regularity assumptions} with $\Lambda$ replaced by $\Lambda_{\{0,1\}}$ and $\mu$ by $\mu^\theta$.
	The correlation functions are then simply given by 
	\begin{equation}\label{def:correlationmarked}\rho_m^\theta((x_1,b_1),\ldots, (x_m,b_m))=\rho_m(x_1,\ldots, x_m),%\quad j_m^{B,\theta}((x_1,b_1),\ldots, (x_m,b_m))=j_m^B(x_1,\ldots, x_m),
	\end{equation}
	with respect to the measure $\mu^\theta$, and hence do not depend on the marks.
	As a direct consequence of the expression for the correlation functions, if the ground process $\mathbb P$ is determinantal and induced by the operator \(\mathrm K\) on \(L^2(\Lambda,\mu)\) with kernel $K:\Lambda^2\to\mathbb C$, then the marked point process \(\mathbb P^{\theta}\) is also determinantal, induced by the operator $\mathrm K^\theta$ on \(L^2(\Lambda_{\{0,1\}},\mu^{\theta})\)
	with kernel 
	\begin{equation}\label{def:Kmarked}K^\theta((x,b_x), (y,b_y)):=K(x,y),\end{equation}
	which is independent of the marks.
	
	\medskip
	
	Now for \(b\in \{0,1\}\) and for a marked configuration \(\xi_{0,1}\in \mathcal N(\Lambda_{\{0,1\}})\), we define \(\xi_b\in \mathcal N(\Lambda)\) by
	\begin{equation}\label{def:xib1}
		\xi_{b}(B)=\xi_{0,1}(B\times\{b\}),\quad B\in \mathcal B_\Lambda,
	\end{equation}
	i.e.\ $\xi_b$ is the configuration of points with mark $b$,
	or equivalently
	\begin{equation}\label{def:xib2}
		\xi_b=\sum_{j : b_j=b}\delta_{x_j},\quad\text{when}\quad\xi_{0,1}=\sum_{j}\delta_{(x_j,b_j)}=\xi_0\otimes\delta_0+\xi_1\otimes\delta_1.
	\end{equation}
	As explained in the introduction, we interpret \(\xi_{1}\) as the configuration of observed particles and \(\xi_{0}\) as the configuration of unobserved particles. If we define the Borel measures \(\mu^\theta_b\) on \(\Lambda\) for  \(b\in\{0,1\}\) by
	\begin{equation}\label{def:mubtheta}
		\dx\mu_b^{\theta}(x)=\theta_b(x)\dx\mu(x),\qquad \theta_1=\theta,\quad \theta_0=1-\theta,
	\end{equation}
then the point processes \(\mathbb P^{\theta}_b\), $b=0,1$, obtained from \(\mathbb P^{\theta}\) via transportation through the maps \(\xi_{0,1}\mapsto\xi_b\), or in other words the marginal distributions of the mark $b$ configurations, also satisfy Assumptions \ref{regularity assumptions} with correlations functions \(\rho_b^{\theta}(\mathbf x)=\rho(\mathbf x)\) with respect to \(\mu_b^{\theta}\) and Janossy densities for any bounded \(B\in\mathcal B_{\Lambda}\) given by
	\begin{equation}
		%\rho_0(\mathbf{x})=\sum_{n=0}^\infty\frac{1}{n!}\int_{\Lambda^n}j_0(\mathbf x\sqcup \mathbf y)\dx^n\mu_0(\mathbf y),\quad\quad 
		j_{b}^{\theta,B}(\mathbf{x})=\sum_{n=0}^\infty\frac{(-1)^n}{n!}\int_{B^n}\rho(\mathbf x\sqcup \mathbf y)\dx^n\mu_b^{\theta}(\mathbf y).\label{def:janossybmarked}
	\end{equation}
	Both point processes $\mathbb P_0^\theta$ and $\mathbb P_1^\theta$ on $\Lambda$ are random independent thinnings of the ground point process $\mathbb P$.
	If the ground process is determinantal and induced by the kernel of a locally trace class operator \(\mathrm K\) on \(L^2(\Lambda,\mu)\), then so is \(\mathbb P^{\theta}_b\) with the same kernel, but now with the corresponding operator acting on \(L^2(\Lambda,\mu_b^{\theta})\) \cite{LavancierMollerRubak}. 
	
	\medskip
	
	Summarizing the above, we have proved the following result.
	
	\begin{proposition}\label{prop:marked}
		Let $\mathbb P$ satisfy Assumptions \ref{regularity assumptions}, and let $\theta:\Lambda\to[0,1]$ be measurable.
		\begin{enumerate}\item The marked point process $\mathbb P^\theta$ satisfies Assumptions \ref{regularity assumptions} with $\Lambda$ replaced by $\Lambda_{\{0,1\}}$ and $\mu$ by $\mu^\theta$; for $b=0,1$, the component $\mathbb P_b^\theta$ satisfies Assumptions \ref{regularity assumptions} with $\mu$ replaced by $\mu_b^\theta$ ; in both cases the correlation functions are the same as those of the ground process \(\mathbb P\).
			\item If $\mathbb P$ is the DPP with kernel $K$ on $(\Lambda,\mu)$, then $\mathbb P^\theta$ is the DPP with kernel \(K^\theta\) on $(\Lambda_{\{0,1\}},\mu^\theta)$. For $b=0,1$, the component 
			$\mathbb P_b^\theta$ is the DPP with kernel $K$ on $(\Lambda,\mu_b^\theta)$. 
		\end{enumerate}
	\end{proposition}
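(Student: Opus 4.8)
The plan is to realize $\mathbb P^\theta$ explicitly through an independent random marking, read off its correlation structure directly, obtain $\mathbb P_0^\theta$ and $\mathbb P_1^\theta$ by projection, and treat the determinantal claims separately, since only there is an operator-theoretic input needed. First I would construct $\mathbb P^\theta$ concretely: on an enlargement of the probability space carrying $\xi=\sum_j\delta_{x_j}\sim\mathbb P$, attach to the points of $\xi$ an i.i.d.\ family $(U_j)_j$ of uniform $[0,1]$ variables, independent of $\xi$ (possible in a $\mathcal C(\Lambda)$-measurable way since $\Lambda$ is Polish), set $b_j=\mathbf 1_{U_j\le\theta(x_j)}$ and $\xi_{0,1}=\sum_j\delta_{(x_j,b_j)}$. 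Equipping $\Lambda_{\{0,1\}}$ with the product of the metric on $\Lambda$ and the discrete metric on $\{0,1\}$, every bounded Borel set is contained in some $B\times\{0,1\}$ with $B\subset\Lambda$ bounded, and $\xi_{0,1}(B\times\{0,1\})=\xi(B)<\infty$ a.s., so $\xi_{0,1}$ is a.s.\ locally finite and we let $\mathbb P^\theta$ be its law. A conditional computation over the marks then gives, for $f\in B_+(\Lambda_{\{0,1\}})$,
\[
\mathbb E\,e^{-\int f\,\dx\xi_{0,1}}=\mathbb E\prod_j\big((1-\theta(x_j))e^{-f(x_j,0)}+\theta(x_j)e^{-f(x_j,1)}\big)=\mathcal L[g_f],
\]
where $g_f:=-\log\big((1-\theta)e^{-f(\cdot,0)}+\theta e^{-f(\cdot,1)}\big)\in B_+(\Lambda)$; this shows $\mathbb P^\theta$ is a well-defined simple point process on $\Lambda_{\{0,1\}}$, uniquely determined by its Laplace functional.

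Next I would compute correlation functions. Using the injectivity of $x\mapsto(x,b)$ and that, conditionally on $\xi$, the marks are independent Bernoulli with $\mathbb P(b_j=c\mid\xi)=\theta_c(x_j)$ in the notation of \eqref{def:mubtheta}, the usual factorial-moment identity shows that the $m$-th factorial moment measure of $\mathbb P^\theta$ has density $\rho_m^\theta((x_1,b_1),\dots,(x_m,b_m))=\rho_m(x_1,\dots,x_m)$ with respect to $(\mu^\theta)^{\otimes m}$, matching \eqref{def:correlationmarked}. Assumptions \ref{regularity assumptions} then transfer from $(\mathbb P,\mu)$ to $(\mathbb P^\theta,\mu^\theta)$: simplicity because $\xi_{0,1}(\{(x,b)\})\le\xi(\{x\})$; local integrability because $\int_{(B\times\{0,1\})^m}\rho_m^\theta\,\dx^m\mu^\theta=\int_{B^m}\rho_m\,\dx^m\mu<\infty$; and item (3) with the same $\epsilon_B$ because $M_m^\theta((B\times\{0,1\})^m)=\mathbb E\,\xi(B)^{[m]}=M_m(B^m)$. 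For $\mathbb P_b^\theta$, the pushforward of $\mathbb P^\theta$ under $\xi_{0,1}\mapsto\xi_b$: since $\xi_b(B)\le\xi(B)$ it is a point process on $\Lambda$ with factorial moment measures dominated by those of $\mathbb P$, and the same marking computation restricted to mark-$b$ points gives correlation density $\rho_m(x_1,\dots,x_m)\prod_i\theta_b(x_i)$ relative to $\mu$, i.e.\ $\rho_m(x_1,\dots,x_m)$ relative to $\mu_b^\theta$, with Janossy densities \eqref{def:janossybmarked}; local integrability and item (3) transfer as above, so Assumptions \ref{regularity assumptions} hold for $(\mathbb P_b^\theta,\mu_b^\theta)$.

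Finally, the determinantal statements. If $\rho_m(x_1,\dots,x_m)=\det(K(x_i,x_j))_{i,j=1}^m$, then by the previous paragraph and \eqref{def:Kmarked} the correlation functions of $\mathbb P^\theta$ and of $\mathbb P_b^\theta$ are the determinants of $K^\theta$ and of $K$ respectively, and the positivity conditions characterizing DPPs are inherited from $K$; so by the characterization in \cite{Soshnikov} it remains only to show that $K^\theta$, respectively $K$, is the kernel of a \emph{locally trace class} operator on $L^2(\Lambda_{\{0,1\}},\mu^\theta)$, respectively on $L^2(\Lambda,\mu_b^\theta)$. Here I would use the unitary identification $L^2(\Lambda_{\{0,1\}},\mu^\theta)\cong L^2(\Lambda,\theta_0\dx\mu)\oplus L^2(\Lambda,\theta_1\dx\mu)$ and the isometric embeddings $V_b\colon L^2(\Lambda,\theta_b\dx\mu)\hookrightarrow L^2(\Lambda,\mu)$, $g\mapsto\sqrt{\theta_b}\,g$: since $\mathrm K^\theta h(x,b)=\int_\Lambda K(x,y)\sum_c\theta_c(y)h(y,c)\,\dx\mu(y)$, the operator $\mathrm K^\theta$ is unitarily equivalent to the compression to $V_0\big(L^2(\Lambda,\theta_0\dx\mu)\big)\oplus V_1\big(L^2(\Lambda,\theta_1\dx\mu)\big)$ of the operator matrix $\big(\mathrm M_{\sqrt{\theta_b}}\mathrm K\mathrm M_{\sqrt{\theta_c}}\big)_{b,c\in\{0,1\}}$ on $L^2(\Lambda,\mu)\oplus L^2(\Lambda,\mu)$; localizing to $B\times\{0,1\}$ replaces each block by $\mathrm M_{\sqrt{\theta_b}}\big(\mathrm M_{1_B}\mathrm K\mathrm M_{1_B}\big)\mathrm M_{\sqrt{\theta_c}}$, which lies in the trace-class ideal $\mathcal S_1$ since $\mathrm M_{1_B}\mathrm K\mathrm M_{1_B}\in\mathcal S_1$ and the $\mathrm M_{\sqrt{\theta_b}}$ are contractions. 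As a compression of a trace-class operator is trace class, $K^\theta$ is locally trace class, and the one-block version of this argument handles $K$ on $L^2(\Lambda,\mu_b^\theta)$ (this is the thinning statement of \cite{LavancierMollerRubak}). The main obstacle is not conceptual but a matter of care: making the Hilbert-space identification and the block computation rigorous when $\theta$ or $1-\theta$ vanishes on a set of positive $\mu$-measure, so that the $V_b$ are genuinely only isometric embeddings and one must pass to compressions rather than to plain unitary equivalences; everything else is a direct transcription of the correlation-function and Janossy-density bookkeeping recorded around \eqref{def:correlationmarked}--\eqref{def:janossybmarked}.
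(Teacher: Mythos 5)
Your proposal is correct and follows essentially the same route as the paper, which treats the statement as a consequence of the standard independent-marking and thinning facts: construct $\mathbb P^\theta$ by i.i.d.\ Bernoulli marks, observe that the correlation functions with respect to $\mu^\theta$ (resp.\ $\mu_b^\theta$) coincide with those of $\mathbb P$, transfer Assumptions \ref{regularity assumptions}, and read off the determinantal structure (the thinning case being the result of \cite{LavancierMollerRubak}). The only difference is that you additionally verify, via the isometries $g\mapsto\sqrt{\theta_b}\,g$ and the block-compression argument, that $\mathrm K^\theta$ and $\mathrm K$ are locally trace class on the new $L^2$ spaces — a detail the paper leaves implicit, and your argument for it is sound.
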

	\begin{remark}
		Observe the analogy with the corresponding result if $\mathbb P$ is the Poisson point process with intensity $\rho:\Lambda\to[0,+\infty)$ with respect to $\mu$. Then $\mathbb P^\theta$ is the Poisson point process with intensity $\rho^\theta(x,b)=\rho(x)$ on $\Lambda_{\{0,1\}}$ with respect to $\mu^\theta$, and $\mathbb P^\theta_b$ is the Poisson point process on $\Lambda$ with intensity $\rho$ with respect to $\mu_b^\theta$.
	\end{remark}
	
	\subsection{Conditioning on an empty observation}
	Let us now assume, in addition to Assumptions \ref{regularity assumptions}, that the probability to have no mark $1$ particles is non-zero, i.e.\ 
	\begin{equation} \label{assumption:empty}\mathbb P^\theta(\xi_1(\Lambda)=0)=\mathbb E\prod_{x\in\supp\,\xi}(1-\theta(x))=L[\theta]>0,\end{equation}
	where we recall the definition of $L[.]$ from \eqref{eq:LaplaceFredholm}. 
	Then, we can condition $\mathbb P^\theta$ on the event $\xi_1(\Lambda)=0$
	in the classical sense and identify it with a point process on $\Lambda$ by identifying $\xi_{0,1}$ with $\xi_0$, to obtain the conditional point process \(\mathbb P^\theta_{|\emptyset}\) on $\Lambda$ defined by
	\begin{equation}\label{def:condprobempty}
		\mathbb P_{|\emptyset}^\theta\left(\xi \in C\right)=\frac{\mathbb P^\theta\left(\xi_{0,1}\mbox{ is such that }\xi_0\in C,\ \xi_1(\Lambda)=0\right)}{\mathbb P^\theta(\xi_1(\Lambda)=0)},\qquad C\in\mathcal C(\Lambda).\end{equation} 
	We write $L_{|\emptyset}^\theta$ for the average multiplicative functional \eqref{eq:LaplaceFredholm} corresponding to the probability $\mathbb P_{|\emptyset}^\theta$. For $\mathrm K$ locally trace class on $(\Lambda,\mu)$ such that $\mathrm M_{\sqrt\theta}\mathrm K\mathrm M_{\sqrt\theta}$ is trace class and  $\det(1-\mathrm M_{\sqrt\theta}\mathrm K \mathrm M_{\sqrt\theta})>0$, let us introduce $K_{|\emptyset}^\theta$ as the kernel of the integral operator 
			\[\mathrm K(1-\mathrm M_\theta \mathrm K)^{-1}:L^2(\Lambda,\mu)\to L^2(\Lambda,\mu),\qquad 
\mathrm K(1-\mathrm M_\theta \mathrm K)^{-1}f(x)=\int_\Lambda K_{|\emptyset}^\theta(x,y)f(y)\dx \mu(y),		
			\] 
	and $\mathrm K^\theta_{|\emptyset}$ as the operator with the kernel $K^\theta_{|\emptyset}$ on $L^2(\Lambda,\mu_0^\theta)$,
			\[\mathrm K^\theta_{|\emptyset}:L^2(\Lambda,\mu_0^\theta)\to L^2(\Lambda,\mu_0^\theta),\qquad 
\mathrm K^\theta_{|\emptyset}f(x)=\int_\Lambda K_{|\emptyset}^\theta(x,y)f(y)\dx\mu_0^\theta(y).
			\] 
	
	\begin{theorem}\label{theorem:condempty}
		Let $\theta:\Lambda\to[0,1]$ be measurable and let $\mathbb P$ be such that \(L[\theta]>0\).
		\begin{enumerate}\item  The point process \(\mathbb P^\theta_{|\emptyset}\) is well-defined and has average multiplicative functional
			\[L^\theta_{|\emptyset}[\phi]=\frac{L[1-(1-\phi)(1-\theta)]}{L[\theta]}.\]
			If in addition \(\mathbb P\) satisfies Assumptions \ref{regularity assumptions} and there exists \(\epsilon>1\) such that \(L[-\epsilon\theta]<\infty\), then so does \(\mathbb P^\theta_{|\emptyset}\), with correlations functions with respect to \(\mu^\theta_0\) given by
			\begin{equation*}
				\rho^\theta_{|\emptyset}(\mathbf x)=\frac{j_1^{\theta,\Lambda}(\mathbf x)}{\mathbb P^\theta(\xi_1(\Lambda)=0)}.
			\end{equation*}
			\item If 
			$\mathbb P$ is the DPP
			with kernel $K$ of a locally trace class operator $\mathrm K$ and \(\mathrm M_{\sqrt{\theta}+1_B}\mathrm K\mathrm M_{\sqrt{\theta}+1_{B}}\) is trace class for any bounded Borel set $B$, then 
			$\mathbb P^\theta_{|\emptyset}$ is the DPP on $(\Lambda,\mu)$ with kernel of the integral operator \eqref{def:Ktheta} acting on $L^2(\Lambda,\mu)$, or equivalently the DPP on $(\Lambda,\mu_0^\theta)$ with kernel $K_{|\emptyset}^\theta$.
Moreover, if \(\mathrm K\) is self-adjoint then \(\mathrm K_{|\emptyset}^\theta\) is self-adjoint.
		\end{enumerate}
	\end{theorem}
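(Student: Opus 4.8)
The plan is to deduce everything from the general formula in part (1) for the average multiplicative functional, which is a short computation, and then to feed the determinantal case into a one-line algebraic identity for resolvents. For part (1), I would start from the definition \eqref{def:condprobempty}: on the event $\{\xi_1(\Lambda)=0\}$ every point of a configuration $\xi$ carries mark $0$, so $\xi_0=\xi$, and integrating out the independent Bernoulli marks turns the numerator of \eqref{def:condprobempty}, weighted by $\prod_{x\in\supp\,\xi_0}(1-\phi)(x)$, into $\mathbb E\prod_{x\in\supp\,\xi}\bigl((1-\theta)(1-\phi)\bigr)(x)=L[1-(1-\phi)(1-\theta)]$; dividing by $\mathbb P^\theta(\xi_1(\Lambda)=0)=L[\theta]$ gives the stated formula, with $L$ extended to non-compactly supported symbols by monotone limits (harmless since $L[\theta]>0$). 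For the correlation functions I would use the reduced Palm calculus: selecting $m$ points of $\xi$ at $\mathbf x$ and then forcing them and all remaining points to receive mark $0$ gives, with respect to $\mu_0^\theta$,
\[
\rho^\theta_{|\emptyset}(\mathbf x)=\frac{\rho(\mathbf x)\,\mathbb E_{\mathbb P_{\mathbf x}}\!\prod_{y\in\supp\,\xi}(1-\theta)(y)}{\mathbb P^\theta(\xi_1(\Lambda)=0)},
\]
and, expanding the Palm expectation into its series and using $\rho(\mathbf x\sqcup\mathbf y)=\rho(\mathbf x)\rho_{\mathbf x}(\mathbf y)$, this equals $j_1^{\theta,\Lambda}(\mathbf x)/\mathbb P^\theta(\xi_1(\Lambda)=0)$ by \eqref{def:janossybmarked}; equivalently one can expand $L[1-(1-\phi)(1-\theta)]$ directly, split each factor $\theta+\phi(1-\theta)$ over subsets and re-sum. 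The extra hypothesis $L[-\epsilon\theta]<\infty$ for some $\epsilon>1$ is what makes these series absolutely convergent and what is inherited as Assumptions \ref{regularity assumptions}(3) for $\mathbb P^\theta_{|\emptyset}$; simplicity and local integrability pass over trivially.

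For part (2), I would first record that the trace-class hypothesis with $B=\emptyset$ makes $\mathrm M_{\sqrt\theta}\mathrm K\mathrm M_{\sqrt\theta}$ trace class, so $\det(1-\mathrm M_{\sqrt\theta}\mathrm K\mathrm M_{\sqrt\theta})>0$ is meaningful and $(1-\mathrm M_\theta\mathrm K)^{-1}$ exists by the Remark following Theorem \ref{theorem:intro1}; expanding that resolvent and dominating multipliers by constants times $\sqrt\theta+1_B$ shows that $\mathrm K':=\mathrm M_{1-\theta}\mathrm K(1-\mathrm M_\theta\mathrm K)^{-1}$ is locally trace class. Since a simple point process is determined by its Laplace (equivalently average multiplicative) functional, the whole of part (2) reduces to showing $L^\theta_{|\emptyset}[\phi]=\det(1-\mathrm M_{\sqrt\phi}\mathrm K'\mathrm M_{\sqrt\phi})$, which is the average multiplicative functional of the DPP with kernel of $\mathrm K'$; expanding that Fredholm determinant exhibits $\mathbb P^\theta_{|\emptyset}$ as the DPP with kernel $K'(x,y)=(1-\theta(x))K^\theta_{|\emptyset}(x,y)$, which, by the cancellation of $(1-\theta)$ against $\mathrm d\mu_0^\theta=(1-\theta)\mathrm d\mu$ in the correlation functions, is the same as the DPP on $(\Lambda,\mu_0^\theta)$ with kernel $K^\theta_{|\emptyset}$. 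To get the displayed identity I would use part (1) and the Fredholm formula to write $L^\theta_{|\emptyset}[\phi]=\det(1-\mathrm M_{\sqrt\psi}\mathrm K\mathrm M_{\sqrt\psi})/\det(1-\mathrm M_{\sqrt\theta}\mathrm K\mathrm M_{\sqrt\theta})$ with $\psi=1-(1-\phi)(1-\theta)=\theta+\phi(1-\theta)$, and then invoke
\[
1-\mathrm M_{\phi(1-\theta)}\mathrm K(1-\mathrm M_\theta\mathrm K)^{-1}=(1-\mathrm M_\psi\mathrm K)(1-\mathrm M_\theta\mathrm K)^{-1},
\]
immediate from $1=(1-\mathrm M_\theta\mathrm K)(1-\mathrm M_\theta\mathrm K)^{-1}$ and $\theta+\phi(1-\theta)=\psi$; taking determinants (multiplicativity plus cyclic invariance) turns the left-hand side into $\det(1-\mathrm M_{\sqrt\phi}\mathrm K'\mathrm M_{\sqrt\phi})$ and the right-hand side into $\det(1-\mathrm M_{\sqrt\psi}\mathrm K\mathrm M_{\sqrt\psi})/\det(1-\mathrm M_{\sqrt\theta}\mathrm K\mathrm M_{\sqrt\theta})$, as needed. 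For the self-adjointness claim, when $\mathrm K=\mathrm K^*$ I would conjugate $\mathrm K(1-\mathrm M_\theta\mathrm K)^{-1}=\mathrm K+\mathrm K\mathrm M_{\sqrt\theta}(1-\mathrm M_{\sqrt\theta}\mathrm K\mathrm M_{\sqrt\theta})^{-1}\mathrm M_{\sqrt\theta}\mathrm K$ by $\mathrm M_{\sqrt{1-\theta}}$ and note that the result equals $\mathrm M_{\sqrt{1-\theta}}\mathrm K\mathrm M_{\sqrt{1-\theta}}+A(1-B)^{-1}A^*$ with $A=\mathrm M_{\sqrt{1-\theta}}\mathrm K\mathrm M_{\sqrt\theta}$ and $B=\mathrm M_{\sqrt\theta}\mathrm K\mathrm M_{\sqrt\theta}=B^*$, hence self-adjoint; since this is the conjugate of $\mathrm K^\theta_{|\emptyset}$ under the isometry $L^2(\mu_0^\theta)\to L^2(\mu)$, $f\mapsto\sqrt{1-\theta}\,f$, it follows that $\mathrm K^\theta_{|\emptyset}$ is self-adjoint.

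The genuinely delicate part will not be any of the algebra but the functional-analytic bookkeeping: because $\theta$ need not have bounded support and $\mathrm K$ is only locally trace class, one must systematically pass between $\det(1-\mathrm M_{\sqrt g}\mathrm K\mathrm M_{\sqrt g})$ and its cyclic rewriting $\det(1-\mathrm M_g\mathrm K)$, check that every operator appearing ($\mathrm M_{\sqrt\psi}\mathrm K\mathrm M_{\sqrt\psi}$ for $\psi=\theta+\phi(1-\theta)$, the building blocks of $\mathrm K'$, and so on) is trace class or locally trace class by dominating its multiplier by a constant times $\sqrt\theta+1_B$ for a suitable bounded $B$ and invoking the hypothesis, and justify the multiplicativity of these generalized determinants for products of the form $(1-\mathrm M_\psi\mathrm K)(1-\mathrm M_\theta\mathrm K)^{-1}$. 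The hypothesis that $\mathrm M_{\sqrt\theta+1_B}\mathrm K\mathrm M_{\sqrt\theta+1_B}$ be trace class for every bounded $B$ is designed precisely so that all of these estimates go through.
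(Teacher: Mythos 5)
Your proposal is essentially correct and follows the same skeleton as the paper's proof: the identity $L^\theta_{|\emptyset}[\phi]=L[1-(1-\phi)(1-\theta)]/L[\theta]$ by integrating out the Bernoulli marks, the resolvent identity $(1-\mathrm M_\psi\mathrm K)(1-\mathrm M_\theta\mathrm K)^{-1}=1-\mathrm M_{\phi(1-\theta)}\mathrm K(1-\mathrm M_\theta\mathrm K)^{-1}$ with $\psi=\theta+\phi(1-\theta)$, and the decomposition $\mathrm K(1-\mathrm M_\theta\mathrm K)^{-1}=\mathrm K+\mathrm K\mathrm M_{\sqrt\theta}(1-\mathrm M_{\sqrt\theta}\mathrm K\mathrm M_{\sqrt\theta})^{-1}\mathrm M_{\sqrt\theta}\mathrm K$ for self-adjointness are all exactly the paper's ingredients. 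Two points of divergence are worth recording. First, for the correlation functions in part (1) you route through reduced Palm measures and the factorization $\rho(\mathbf x\sqcup\mathbf y)=\rho(\mathbf x)\rho_{\mathbf x}(\mathbf y)$; the paper instead expands $L[\theta+(1-\theta)\phi]$ directly, splits each factor over subsets, re-sums to expose $j_1^{\theta,\Lambda}$, and extracts the moment measures from a power series in an auxiliary parameter $w$. Both give the same answer (your Palm expectation $\rho(\mathbf x)L_{\mathbf x}[\theta]$ is literally the series defining $j_1^{\theta,\Lambda}(\mathbf x)$), and indeed the paper itself uses the Palm route in the more general Theorem \ref{theorem:condnonempty}(3); your alternative "expand and re-sum" remark is the paper's actual argument here. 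Second, and more substantively: in part (2) you apply the determinant identity directly to $\mathrm K$ and defer to "bookkeeping" the multiplicativity of determinants of the form $\det(1-\mathrm M_g\mathrm K)$, which must be read as $\det(1-\mathrm M_{\sqrt g}\mathrm K\mathrm M_{\sqrt g})$ since $\mathrm M_g\mathrm K$ is not itself trace class. That multiplicativity for products like $(1-\mathrm M_\psi\mathrm K)(1-\mathrm M_\theta\mathrm K)^{-1}$ is the one genuinely delicate step, and it does not follow from the standard $\det(AB)=\det A\det B$ for identity-plus-trace-class factors. The paper resolves it by first proving the identity for the truncations $\mathrm K_n=\mathrm M_{1_{B_n}}\mathrm K\mathrm M_{1_{B_n}}$ on an exhausting sequence of bounded sets, where all operators are honestly trace class and cyclicity is legitimate, and then passing to the limit using trace-norm convergence of $\mathrm M_{\sqrt\psi}\mathrm K_n\mathrm M_{\sqrt\psi}$, $\mathrm M_{\sqrt\theta}\mathrm K_n\mathrm M_{\sqrt\theta}$, and $\mathrm M_{\sqrt\phi}\mathrm M_{1-\theta}\mathrm K_n(1-\mathrm M_\theta\mathrm K_n)^{-1}\mathrm M_{\sqrt\phi}$ (this is where the hypothesis on $\mathrm M_{\sqrt\theta+1_B}\mathrm K\mathrm M_{\sqrt\theta+1_B}$ earns its keep, via the four-term decomposition with $B=\supp\phi$). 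If you intend to avoid truncation, you would need to supply an independent proof of that multiplicativity; otherwise your plan is complete once you adopt the exhaustion argument as the content of your bookkeeping step.
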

	\begin{proof}
		\begin{enumerate}\item
			By definition of conditional probability, for $\phi:\Lambda\to\mathbb R^+$ measurable, we have
			\begin{align*}L^\theta_{|\emptyset}[\phi]&=\mathbb E_{|\emptyset}^\theta \prod_{u\in\supp\,\xi_0}(1-\phi(u))=\frac{\mathbb E \prod_{x\in\supp\,\xi}(1-\phi(x))(1-\theta(x))}{\mathbb P^\theta(\xi_1(\Lambda)=0) }\\
				&=
				\frac{\mathbb E \prod_{x\in\supp\,\xi}(1-\phi(x))(1-\theta(x))}{\mathbb E \prod_{x\in\supp\,\xi}(1-\theta(x)) }=\frac{L[1-(1-\phi)(1-\theta)]}{L[\theta]}.
			\end{align*}
			Now if \(\mathbb P\) is simple, then so is \(\mathbb P^\theta\) and a fortiori so is \(\mathbb P^\theta_{|\emptyset}\), and the inequality for \(\phi\geq 0\)
			\begin{equation*}
				L^\theta_{|\emptyset}[-\phi]\leq\frac{L[-\phi]}{L[\theta]}
			\end{equation*}
			shows that \(\mathbb P^\theta_{|\emptyset}\) satisfies the third of Assumptions \ref{regularity assumptions} whenever \(\mathbb P\) does. It thus remains to compute the correlation functions. Note first that \(L[-\epsilon\theta]<\infty\) implies that \(\mathbb P^\theta_1\) satisfies the third of Assumptions \ref{regularity assumptions} with \(B=\Lambda\), so that the global Janossy densities \(j^{\theta,\Lambda}_1\) are well-defined and given by \eqref{Macchi relation 1}. The computations hereafter then involve absolutely convergent series, and all the needed results of integration theory may be applied. Let \(\eta=1-(1-\theta)(1-\phi)=\theta+(1-\theta)\phi\), then
			\begin{equation*}
				L[\eta]=\sum_{n\geq 0}\frac{(-1)^n}{n!}\int_{\Lambda^n}\rho_n(\mathbf x)\prod_{j=1}^n\eta(x_j)\dx^n\mu(\mathbf x).
			\end{equation*}
			Writing \(\mathbf x=\mathbf y\sqcup\mathbf z\) and using the symmetry of the measure \(\rho_n(\mathbf x)\dx^n\mu(\mathbf x)\) yields that each integral is equal to
			\begin{equation*}
				\begin{aligned}
					\sum_{l=0}^n\binom nl\int_{\Lambda^n}\rho_n(\mathbf y\sqcup\mathbf z)\prod_{j=1}^l(1-\theta(y_j))\phi(y_j)\prod_{i=1}^{n-l}\theta(z_i)\dx^n\mu(\mathbf y\sqcup\mathbf z),
				\end{aligned}
			\end{equation*}
			so that
			\begin{equation*}
				L[\eta]=\sum_{l\geq 0}\frac{(-1)^l}{l!}\int_{\Lambda^l}\left[\sum_{n\geq l}\frac{(-1)^{n-l}}{(n-l)!}\int_{\Lambda^{n-l}}\rho(\mathbf y\sqcup\mathbf z)\dx^{n-l}\mu^\theta_1(\mathbf z)\right]\prod_{j=1}^l(1-\theta(y_j))\phi(y_j)\dx^l\mu^\theta_0(\mathbf y).
			\end{equation*}
			We recognize expression \eqref{Macchi relation 1} for \(j_1^{\theta,\Lambda}\) in the integral. Dividing the previous equation by \(\mathbb P^\theta(\xi_1(\Lambda)=0)=L[\theta]\), we get an expression for \(L^\theta_{|\emptyset}[\phi]\), and when \(\phi=- 1_B\), this implies the existence of all factorial moment measure \(M^\theta_{m|\emptyset}\) of $\mathbb P^\theta_{|\emptyset}$, given the estimate \(M^\theta_{m|\emptyset}(B^{m})\leq L^\theta_{|\emptyset}[- 1_B]\). Replacing \(\phi\) by \(w\phi\) for \(w\in\mathbb C\) with a small enough modulus, we obtain a power series in \(w\) and we can read off the expressions for the correlation functions \(\dx M^\theta_{m|\emptyset}=\rho^\theta_{m|\emptyset}\dx^m\mu^\theta_0\) by looking at each power of \(w\).
			\item
			Let $(B_n)_{n\in\mathbb N}$ be an exhausting increasing sequence of bounded Borel subsets of $\Lambda$, and let $\mathrm K_n=\mathrm M_{1_{B_n}}\mathrm K\mathrm M_{1_{B_n}}$. 
By {\eqref{eq:LaplaceFredholm0}--\eqref{eq:LaplaceFredholm}}, the associated conditional ensemble $(\mathbb P_n)^\theta_{|\emptyset}$ has average multiplicative functional equal to 
			\begin{align*}\frac{\det\left(1-\mathrm M_{\phi+\theta-\phi\theta}\mathrm K_n\right)}{\det(1-\mathrm M_\theta \mathrm K_n)}&=\det\left[\left((1-\mathrm M_\theta \mathrm K_n)-\mathrm M_\phi \mathrm M_{1-\theta}\mathrm K_n\right)(1-\mathrm M_\theta \mathrm K_n)^{-1}\right]\\&=\det\left[1-\mathrm M_\phi \mathrm M_{1-\theta}\mathrm K_n(1-\mathrm M_\theta \mathrm K_n)^{-1}\right],\end{align*}
			and it follows that $(\mathbb P_n)^\theta_{|\emptyset}$ is also determinantal on $(\Lambda,\mu)$ with kernel of the integral operator
			$\mathrm M_{1-\theta}\mathrm K_n(1-\mathrm M_\theta \mathrm K_n)^{-1}$.
	The left hand side in the above identity is equal to $\frac{\det\left(1-\mathrm M_{\sqrt{\phi+\theta-\phi\theta}}\mathrm K_n\mathrm M_{\sqrt{\phi+\theta-\phi\theta}}\right)}{\det(1-\mathrm M_{\sqrt{\theta}} \mathrm K_n\mathrm M_{\sqrt{\theta}})}$ and as $n\to\infty$, it converges to 
\[\frac{\det\left(1-\mathrm M_{\sqrt{\phi+\theta-\phi\theta}}\mathrm K\mathrm M_{\sqrt{\phi+\theta-\phi\theta}}\right)}{\det(1-\mathrm M_{\sqrt{\theta}} \mathrm K\mathrm M_{\sqrt{\theta}})}=L^\theta_{|\emptyset}[\phi],\] since 	
$\mathrm M_{\sqrt{\phi+\theta-\phi\theta}}\mathrm K_n\mathrm M_{\sqrt{\phi+\theta-\phi\theta}}$ and $\mathrm M_{\sqrt{\theta}}\mathrm K_n\mathrm M_{\sqrt{\theta}}$	 converge in trace norm to 
$\mathrm M_{\sqrt{\phi+\theta-\phi\theta}}\mathrm K\mathrm M_{\sqrt{\phi+\theta-\phi\theta}}$ and $\mathrm M_{\sqrt{\theta}}\mathrm K\mathrm M_{\sqrt{\theta}}$, since
the latter two operators are trace class.
Indeed, 
$\mathrm M_{\sqrt{\theta}}\mathrm K\mathrm M_{\sqrt{\theta}}=\mathrm M_{\sqrt{\theta}+1_B}\mathrm K\mathrm M_{\sqrt{\theta}+1_B}$
with $B=\emptyset$; 
$\mathrm M_{\sqrt{\phi+\theta-\phi\theta}}\mathrm K\mathrm M_{\sqrt{\phi+\theta-\phi\theta}}$ can be decoomposed, with $B={\rm supp}\,\phi$, as
\[\mathrm M_{\sqrt{\phi+\theta-\phi\theta}}\mathrm 1_B\mathrm K\mathrm 1_B\mathrm M_{\sqrt{\phi+\theta-\phi\theta}}
+\mathrm 1_{B^c}\mathrm M_{\sqrt{\theta}}\mathrm K\mathrm M_{\sqrt{\theta}}\mathrm 1_{B^c}
\\+\mathrm 1_{B^c}\mathrm M_{\sqrt{\theta}}\mathrm K\mathrm 1_B\mathrm M_{\sqrt{\phi+\theta-\phi\theta}}
+\mathrm M_{\sqrt{\phi+\theta-\phi\theta}}\mathrm 1_B\mathrm K\mathrm M_{\sqrt{\theta}}\mathrm 1_{B^c}
\]
and it is easy to see that each term is trace class.

Similarly, the right hand side converges as $n\to\infty$ to 
			\[\det\left[1-\mathrm M_{\sqrt\phi} \mathrm M_{1-\theta}\mathrm K(1-\mathrm M_\theta \mathrm K)^{-1}\mathrm M_{\sqrt\phi}\right],\] since $\mathrm M_{\sqrt\phi} \mathrm M_{1-\theta}\mathrm K_n(1-\mathrm M_\theta \mathrm K_n)^{-1}\mathrm M_{\sqrt\phi}$ converges to $\mathrm M_{\sqrt\phi} \mathrm M_{1-\theta}\mathrm K(1-\mathrm M_\theta \mathrm K)^{-1}\mathrm M_{\sqrt\phi}$ in trace norm (note that we need the condition that \(\mathrm M_{\sqrt{\theta}+1_B}\mathrm K\mathrm M_{\sqrt{\theta}+1_{B}}\) is trace class for any bounded Borel set $B$ here again, in order to have $\mathrm M_{\sqrt{\phi}}\mathrm K\mathrm M_{\sqrt{\theta}}$, $\mathrm M_{\sqrt{\theta}}\mathrm K\mathrm M_{\sqrt{\phi}}$ trace class). Thus, $\mathbb P^\theta_{|\emptyset}$ is the DPP with kernel of the operator $\mathrm M_{1-\theta}\mathrm K(1-\mathrm M_\theta \mathrm K)^{-1}$ on $L^2(\Lambda,\mu)$, or equivalently the DPP on $(\Lambda,\mu_0^\theta)$ with kernel $K^\theta_{|\emptyset}$.\\
			If \(\mathrm K\) is self-adjoint on $L^2(\Lambda,\mu)$, then so is 
			\begin{equation*}
				\mathrm K(1-\mathrm M_\theta\mathrm K)^{-1}=\mathrm K+\mathrm K\mathrm M_{\sqrt\theta}(1-\mathrm M_{\sqrt\theta}\mathrm K\mathrm M_{\sqrt\theta})^{-1}\mathrm M_{\sqrt\theta}\mathrm K,
			\end{equation*}	
			as the sum of two self-adjoint operators, hence the kernel $K^\theta_{|\emptyset}$ defines a self-adjoint operator on $L^2(\Lambda,\mu_0^\theta)$ as well.			
		\end{enumerate}
	\end{proof}
	
	\begin{remark}
		If $\mathbb P$ is the Poisson point process with intensity $\rho$ on $\Lambda$ with respect to $\mu$, then $\mathbb P^\theta_{|\emptyset}$ is the Poisson point process with the same intensity $\rho$ on $\Lambda$, but with respect to $\mu_0^\theta$. Hence, $\mathbb P^\theta_{|\emptyset}$ is equal to $\mathbb P^\theta_0$, and as it should be, the fact that there are no mark $1$ points does not give any further information about the mark $0$ points. 
	\end{remark}
\begin{remark}
Theorem \ref{theorem:intro1} is a restatement of the second part of the above result.
\end{remark}	

	\medskip
	
	\subsection{Conditioning on a finite mark $1$ configuration $\xi_1$}\label{sec:24}
	
	For non-empty configurations $\xi_1$ of points with mark $1$, the situation is more involved.
	Here we need to assume that $\theta$ is such that
	there exists \(\epsilon>0\) such that \begin{equation}\label{condition:theta}L[-(1+\epsilon)\theta]={\mathbb E\prod_{x\in \supp\,\xi}(1+(1+\epsilon)\theta(x))<\infty},\end{equation} 
	where the average is with respect to the ground process $\mathbb P$.
	This condition ensures, by \eqref{factorialmoment}, that 
	$\mathbb P^\theta_1$ satisfies Assumptions \ref{regularity assumptions} (3) also for $B=\Lambda$, and in particular that 
	\[\mathbb E^\theta \xi_1(\Lambda)= \mathbb E\sum_{x\in\supp\,\xi}\theta(x)\leq {\mathbb E\prod_{x\in \supp\,\xi}(1+\theta(x))<\infty}.\]
	This implies that the number of observed particles
	\(\xi_1(\Lambda)\) is finite for \(\mathbb P^{\theta}\)-a.e.\ $\xi_1$. Based on such an observed configuration $\xi_1$, we would like to obtain information about the configuration $\xi_0$ of points with mark $0$. To this end, we want to define a point process \(\mathbb P^{\theta}_{|\mathbf v}\) on \(\Lambda\times\{0\}\) representing the restriction to $\Lambda\times\{0\}$ of the conditioning of \(\mathbb P^{\theta}\) on an observation \(\mathbf v=\{v_1,\ldots, v_m\}\), or more precisely on $\xi_1$ being equal to $\delta_\mathbf v:=\sum_{j=1}^m\delta_{v_j}$. We can then identify $\mathbb P^\theta_{|\mathbf v}$ with a point process on $\Lambda$ by omitting the marks $0$. The probability to observe given points $\mathbf v$ with mark $1$ will typically be zero, such that we cannot use classical conditional probability to construct the conditional point processes.
	
	\medskip
	
	\subsubsection*{Conditioning on $m$ mark $1$ points}
	Let us assume that $\mathbb P^{\theta}(\xi_1(\Lambda)=m)>0$. Then we can condition $\mathbb P^\theta$ on the event $\xi_1(\Lambda) =m$ in the classical sense.
	Now, we want to construct a family of conditional point processes $\left\{\mathbb P_{|\bf v}^\theta\right\}_{\bf v\in \Lambda^m}$, which is consistent in the sense that 
	averaging the $\mathbb P_{|\mathbf v}^\theta$-probability of an event \(\xi_{0}\in C\in\mathcal C(\Lambda)\) over the positions of the $m$-point configuration $v_1,\ldots, v_m$ (with respect to the probability $\mathbb P_1^\theta(.|\xi_1(\Lambda)=m)$) is equal to the $\mathbb P^\theta(.|\xi_1(\Lambda)=m)$-probability of the event $\xi_0\in C$. In other words, we average $v_1,\ldots, v_m$ with respect to the joint probability distribution
	\begin{equation}\label{def:jpdfm}
		\dx\pi^{\theta}_{1,m}(\mathbf v)=\dx \pi^{\theta}_{1,m}(v_1,\ldots, v_m):=\frac{j^{\theta,\Lambda}_{1,m}(v_1,\ldots, v_m)}{m!\mathbb P^{\theta}(\xi_1(\Lambda)=m)}\prod_{j=1}^m\dx\mu^{\theta}_1(v_j),
	\end{equation}
	for  $v_1,\ldots, v_m$, 
	where $j^{\theta,\Lambda}_{1,m}$ is the $m$-th order global Janossy density of the measure $\mathbb P^\theta_1$ for the mark $1$ configuration (which exists if \eqref{condition:theta} holds),
	and we will need consistency in the sense that
	\begin{equation}\label{proto disintegration}
		\int_{\Lambda^m}\mathbb P^{\theta}_{|\mathbf v}(\xi\in C)\dx\pi^{\theta}_{1,m}(\mathbf v)=\mathbb P^{\theta}\left(\left.\xi_{0}\in C \right|\ \xi_1(\Lambda)=m\right),\qquad C\in \mathcal C(\Lambda).
	\end{equation}
	
	\medskip
	
	\subsubsection*{Preliminaries on reduced Palm measures}
	
	As explained in Section \ref{section:1}, to construct $\mathbb P^\theta_{|\mathbf v}$, we need reduced local Palm distributions. 
	Given a point process $\mathbb P$ satisfying Assumptions \ref{regularity assumptions} and $m\in\mathbb N$, there exists a family of point processes
	$\left\{\mathbb P_{\mathbf w}\right\}_{\mathbf w\in\Lambda^m}$, which represent the conditioning of $\mathbb P$ on $m$ points $\mathbf w=\{w_1,\ldots, w_m\}\subset\supp\,\xi$, reduced by mapping $\xi\in\mathcal N(\Lambda)$ to its restriction $\left.\xi\right|_{\Lambda\setminus\mathbf w}$. We need the following fundamental properties (see e.g.\ \cite{DaleyVereJones}) of these $m$-th order reduced Palm measures. 
	\begin{enumerate}
		\item For any \(C\in\mathcal C(\Lambda) \), the map 
		\(\mathbf w\in\Lambda^m\mapsto\mathbb P_{\mathbf w}(C)\) is \(\mathcal B_{\Lambda^m}\)-measurable. \item For \(\mu^{\otimes m}\)-a.e.\ \(\mathbf w\in\Lambda^m\) such that \(\rho(\mathbf w)>0\), the reduced Palm measure 
		\(\mathbb P_{\mathbf w}\) satisfies Assumptions \ref{regularity assumptions}, and its correlation functions \(\rho_{\mathbf w}\) with respect to \(\mu\) are given by (see \cite{ShiraiTakahashi})
		\begin{equation}\label{Shirai-Takahashi theorem}
			\rho_{\mathbf w}(\mathbf x)=\frac{\rho(\mathbf x\sqcup\mathbf w)}{\rho(\mathbf w)}.
		\end{equation}
		\item Writing $\delta_{\mathbf w}=\sum_{j=1}^m\delta_{w_j}$, we have for any measurable \(\psi :\Lambda^m \times\mathcal N(\Lambda)\to\mathbb R^+\) that the disintegration 
		\begin{equation}\label{eq:Palmdisintegration}
			\mathbb E \sum\psi(\mathbf w;\xi-\delta_{\mathbf w})=\int_{\Lambda^m}\mathbb E_{\mathbf w}\psi(\mathbf w,\xi) \rho(\mathbf w) \dx^m\mu(\mathbf w)
		\end{equation}
		holds,
		where the sum at the left is over all ordered $m$-tuples $\mathbf w=(w_1,\ldots, w_m)$ of distinct points in $\supp\,\xi$ and
		where $\mathbb E_{\mathbf w}$ is the average with respect to $\mathbb P_{\mathbf w}$.
	\end{enumerate}

	\medskip
	In particular, the second property implies that if \(\mathbb P\) is determinantal with kernel $K$, then for $\mu^{\otimes m}$-a.e.\ \(\mathbf w\in\Lambda^m\) such that \(\det K(\mathbf w,\mathbf w)>0\), the reduced Palm measure $\mathbb P_{\mathbf w}$ is determinantal and induced by the kernel  $ K_{\bf w}$ given by \eqref{def:Kv}, or equivalently by
	\begin{align}\label{eq:Kv}
		K_{\mathbf w}(x,y)&=K(x,y)-K(x,\mathbf w)K(\mathbf w,\mathbf w)^{-1}K(\mathbf w, y),\end{align}
where we used the block determinant formula
\begin{equation}\label{eq:block}\det\begin{pmatrix}A&B\\C&D\end{pmatrix}=\det\left(A-BD^{-1}C\right)\det D,\end{equation}
and where similarly as before, $K(\mathbf w,\mathbf w)$ represents an $m\times m$ matrix, $K(x,\mathbf w)$ a row vector, and $K(\mathbf w,y)$ a column vector.

	\medskip
	
	\subsubsection*{Construction of the conditional ensembles}
	
	We will now apply the above properties of reduced Palm measures to the point process $\mathbb P^\theta\left(.|\xi_1(\Lambda)=m\right)$, the marked point process conditioned on observing exactly $m$ particles. {If $\mathbb P^\theta(\xi_1(\Lambda)=m)>0$, this point process indeed satisfies Assumptions \ref{regularity assumptions}. 
		{Setting $\mathbf w=\{(v_1,1),\ldots, (v_m,1)\}$ and $\mathbf v=\{v_1,\ldots, v_m\}$, we define $\mathbb P^\theta_{|\mathbf v}$ as the $m$-th order reduced local Palm distribution of $\mathbb P^\theta\left(.|\xi_1(\Lambda)=m\right)$ associated to the points $\mathbf w$. This is a point process on $\Lambda_{\{0,1\}}$ whose configurations have a.s.\ no points in $\Lambda\times\{1\}$; hence we can identify $\mathbb P^\theta_{|\mathbf v}$ with a point process on $\Lambda$ by omitting the marks.}
		%We will now prove some important properties of   the conditional ensembles $\mathbb P_{|\mathbf v}^\theta$,
{Before we prove some important properties of the conditional ensembles $\mathbb P_{.|\mathbf v}^\theta$, let us mention that another intuitive way of defining them would be to first take the Palm measure of \(\mathbb P^\theta\) at \(\mathbf w=((v_1,1),...,(v_m,1))\) and then condition on there being no other particles with mark $1$. The third item of the next result shows that this is indeed equivalent to our definition, and when $\mathbb P$ is a DPP, it has the advantage that it allows us to define the DPP $\mathbb P^\theta_{|\mathbf v}$ without need to pass via the (in general not determinantal) point process \(\mathbb P^\theta(.|\xi_1(\Lambda)=m)\).
		 Thus for $\mathrm K$ a locally trace class operator on $(\Lambda,\mu)$ such that $\mathrm M_\theta\mathrm K$ is trace class and  $\det(1-\mathrm M_\theta\mathrm K_\mathbf v)>0$, let us introduce $K_{|\mathbf v}^\theta$ as the kernel of the integral operator 
		\[\mathrm K_\mathbf v(1-\mathrm M_\theta \mathrm K_\mathbf v)^{-1}:L^2(\Lambda,\mu)\to L^2(\Lambda,\mu),\qquad 
		\mathrm K_\mathbf v(1-\mathrm M_\theta \mathrm K_\mathbf v)^{-1}f(x)=\int_\Lambda K_{|\mathbf v}^\theta(x,y)f(y)\dx \mu(y),		
		\] 
		and $\mathrm K^\theta_{|\mathbf v}$ as the operator with the kernel $K^\theta_{|\mathbf v}$ on $L^2(\Lambda,\mu_0^\theta)$,
		\[\mathrm K^\theta_{|\mathbf v}:L^2(\Lambda,\mu_0^\theta)\to L^2(\Lambda,\mu_0^\theta),\qquad 
		\mathrm K^\theta_{|\mathbf v}f(x)=\int_\Lambda K_{|\mathbf v}^\theta(x,y)f(y)\dx\mu_0^\theta(y).
		\] }
		\begin{theorem}\label{theorem:condnonempty}
			Let $\mathbb P$ satisfy Assumptions \ref{regularity assumptions}, and let $\theta:\Lambda\to[0,1]$ be measurable and such that \eqref{condition:theta} holds.	Let \(m\geq 0\) be such that \(\mathbb P^{\theta}(\xi_1(\Lambda)=m)>0\). The family of point processes \(\left\{\mathbb P^{\theta}_{|\mathbf{v}}\right\}_{\mathbf v\in\Lambda^m}\)
			satisfies the following properties.
			\begin{enumerate}
				\item
				For any \(C\in\mathcal C(\Lambda) \),  the map \(\mathbf v\in\Lambda^m\mapsto\mathbb P_{|\mathbf v}^\theta(C)\) is \(\mathcal B_{\Lambda^m}\)-measurable.
				\item {For any Borel measurable \(\phi :\mathcal N(\Lambda_{\{0,1\}})\to \mathbb [0,+\infty), \) writing $\delta_{\mathbf v}=\sum_{j=1}^m\delta_{v_j}$, we have the disintegration
					\begin{equation}\label{eq:disintegration}
						\mathbb E^{\theta}\left[\left.\phi(\xi_{0,1})\ \right|\ \xi_1(\Lambda)=m\right]=\int_{\Lambda^m}\mathbb E^{\theta}_{|\mathbf v}\phi(\xi\otimes\delta_{0}+\delta_{\mathbf v}\otimes\delta_{1})\dx\pi^{\theta}_{1,m}(\mathbf v),
					\end{equation}
					where $\pi^{\theta}_{1,m}$ is given by \eqref{def:jpdfm}.}
				\item For \(\pi_{1,m}^{\theta}\)-a.e.\ \(\mathbf v\in\Lambda^m\), the point process \(\mathbb P^{\theta}_{|\mathbf v}\) satisfies Assumptions \ref{regularity assumptions}, its correlation functions \(\rho^\theta_{|\mathbf v}\) with respect to \(\mu^\theta_0\) are given by 
					\begin{equation*}
						\rho^\theta_{|\mathbf v}(\mathbf x)=\frac{j^{\theta,\Lambda}_{1}(\mathbf x\sqcup\mathbf v)}{j^{\theta,\Lambda}_1(\mathbf v)},%v\rho^\theta_1(\mathbf x\mid\mathbf v):=
					\end{equation*}
					and its average multiplicative functional is given by
					\begin{align}
						&\label{eq:L}L^{\theta}_{|\mathbf v}[\phi_0]=\frac{L_{\mathbf v}[1-(1-\theta)(1-\phi_0)]}{L_{\mathbf v}[\theta]},
						%		\\
						%			&\rho_{|\mathbf v}^{\theta}(\mathbf u)=\rho_\mathbf v(\mathbf u)\frac{L_{\mathbf u\sqcup\mathbf v}[\theta]}{L_{\mathbf v}L[\theta]},
					\end{align}		
					where $L_{\mathbf v}$ denotes the average multiplicative functional of the reduced Palm measure $\mathbb P_{\mathbf v}$ of the ground process $\mathbb P$ on $\Lambda$ associated to the points $\mathbf v$.
					\item If \(\mathbb P\) is the DPP on $(\Lambda,\mu)$ with kernel of the operator \(\mathrm K\) on \(L^2(\Lambda,\mu)\), and \(\mathrm M_{\sqrt\theta +1_B}\mathrm K \mathrm M_{\sqrt\theta +1_B}\) is trace class for any bounded Borel set $B$,
					 then for \(\pi^{\theta}_{1,m}\)-a.e.\ \(\mathbf v\in\Lambda^m\), \(\mathbb P_{|\mathbf v}^{\theta}\) is the DPP on $(\Lambda,\mu)$ with kernel $(1-\theta)(x)K_{|\mathbf v}^\theta(x,y)$ of the operator
$\mathrm M_{1-\theta}\mathrm K_\mathbf v\left(1-\mathrm M_{\theta}\mathrm K_\mathbf v\right)^{-1}$ on $L^2(\Lambda,\mu)$, or equivalently the DPP on $(\Lambda,\mu_0^\theta)$ with kernel $K^\theta_{|\mathbf v}$.		
Moreover, if \(\mathrm K\) is self-adjoint on \(L^2(\Lambda,\mu)\) then the operator \(\mathrm K_{|\mathbf v}^\theta\) with kernel $K^\theta_{|\mathbf v}$ on \(L^2(\Lambda,\mu_0^\theta)\) is self-adjoint.
				\end{enumerate}
			\end{theorem}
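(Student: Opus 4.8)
The plan is to run everything through the calculus of reduced Palm measures recalled in \eqref{Shirai-Takahashi theorem}--\eqref{eq:Palmdisintegration}, combined with Theorem \ref{theorem:condempty}. Throughout, write $Q:=\mathbb P^\theta(\,\cdot\mid\xi_1(\Lambda)=m)$ and $\mathbf w=((v_1,1),\dots,(v_m,1))$, so that $\mathbb P^\theta_{|\mathbf v}$ is by construction the $m$-th order reduced local Palm distribution of $Q$ at $\mathbf w$. Since $\mathbb P^\theta$ satisfies Assumptions \ref{regularity assumptions} on $\Lambda_{\{0,1\}}$ and conditioning on the positive-probability cylinder event $\{\xi_1(\Lambda)=m\}$ preserves Assumptions \ref{regularity assumptions}, item (1) is exactly property (1) of reduced Palm measures read through the bijection $\mathbf v\leftrightarrow\mathbf w$. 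For item (2), I would apply the Palm disintegration \eqref{eq:Palmdisintegration} (on $\Lambda_{\{0,1\}}$, for the process $Q$) to the test function $\psi(\mathbf w;\eta)=\frac1{m!}\,1_{(\Lambda\times\{1\})^m}(\mathbf w)\,\phi(\eta+\delta_\mathbf w)$: under $Q$ there are a.s.\ exactly $m$ points with mark $1$, so the only ordered $m$-tuples of distinct points of $\supp\,\xi_{0,1}$ contributing to the sum are the $m!$ orderings of the mark-$1$ configuration, each producing $\frac1{m!}\phi(\xi_{0,1})$, and the left-hand side collapses to $\mathbb E^\theta[\phi(\xi_{0,1})\mid\xi_1(\Lambda)=m]$. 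On the right-hand side, $\mathbb E^Q_\mathbf w=\mathbb E^\theta_{|\mathbf v}$ charges only configurations without mark-$1$ points, while a Janossy-type (Macchi) computation in the mark-$1$ direction, as in \eqref{def:janossybmarked}, identifies the $m$-th correlation function of $Q$ at $\mathbf w$ as $\rho^Q(\mathbf w)=j^{\theta,\Lambda}_{1,m}(\mathbf v)/\mathbb P^\theta(\xi_1(\Lambda)=m)$, so that $\frac1{m!}\rho^Q(\mathbf w)\,\mathrm d^m\mu^\theta(\mathbf w)$ restricted to $(\Lambda\times\{1\})^m$ is precisely $\mathrm d\pi^\theta_{1,m}(\mathbf v)$ from \eqref{def:jpdfm}; this gives \eqref{eq:disintegration}.

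For item (3), I would first compute the correlation functions of $\mathbb P^\theta_{|\mathbf v}=Q_\mathbf w$ via \eqref{Shirai-Takahashi theorem}. Since $Q_\mathbf w$ charges only configurations without mark-$1$ points, only the mixed correlation $\rho^Q\big((x_1,0),\dots,(x_k,0),(v_1,1),\dots,(v_m,1)\big)$ is relevant, and the same mark-$1$ Janossy computation yields $\rho^Q\big((\mathbf x,0)\sqcup(\mathbf v,1)\big)=j^{\theta,\Lambda}_1(\mathbf x\sqcup\mathbf v)/\mathbb P^\theta(\xi_1(\Lambda)=m)$, where $j^{\theta,\Lambda}_1$ is the global Janossy density of the mark-$1$ marginal $\mathbb P^\theta_1$, which exists under \eqref{condition:theta}. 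Dividing by $\rho^Q(\mathbf w)$ and omitting the marks gives $\rho^\theta_{|\mathbf v}(\mathbf x)=j^{\theta,\Lambda}_1(\mathbf x\sqcup\mathbf v)/j^{\theta,\Lambda}_1(\mathbf v)$ with respect to $\mu^\theta_0$; that $\mathbb P^\theta_{|\mathbf v}$ satisfies Assumptions \ref{regularity assumptions} for $\pi^\theta_{1,m}$-a.e.\ $\mathbf v$ then follows from property (2) of reduced Palm measures, since $\rho^Q(\mathbf w)>0$ on $\supp\,\pi^\theta_{1,m}$. To obtain the multiplicative functional \eqref{eq:L}, I would identify $\mathbb P^\theta_{|\mathbf v}$ with $(\mathbb P_\mathbf v)^\theta_{|\emptyset}$: using $\rho_\mathbf v(\mathbf x)=\rho(\mathbf x\sqcup\mathbf v)/\rho(\mathbf v)$ together with \eqref{def:janossybmarked} one checks that the global Janossy density of $(\mathbb P_\mathbf v)^\theta_1$ is $j^{\theta,\Lambda}_1(\mathbf x\sqcup\mathbf v)/\rho(\mathbf v)$ and that $\mathbb P^\theta_\mathbf v(\xi_1(\Lambda)=0)=L_\mathbf v[\theta]=j^{\theta,\Lambda}_1(\mathbf v)/\rho(\mathbf v)$, so the correlation functions just computed coincide with those of $(\mathbb P_\mathbf v)^\theta_{|\emptyset}$ furnished by Theorem \ref{theorem:condempty}(1); since correlation functions determine a point process satisfying Assumptions \ref{regularity assumptions}, the two processes agree, and Theorem \ref{theorem:condempty}(1) then yields \eqref{eq:L}.

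Item (4) reduces, via $\mathbb P^\theta_{|\mathbf v}=(\mathbb P_\mathbf v)^\theta_{|\emptyset}$, to applying the determinantal part of Theorem \ref{theorem:condempty}(2) to the DPP $\mathbb P_\mathbf v$, which by \eqref{Shirai-Takahashi theorem} and \eqref{def:Kv}--\eqref{eq:Kv} is the DPP with kernel $K_\mathbf v$, a rank-$\le m$ perturbation of $\mathrm K$ with additive kernel $-K(x,\mathbf v)K(\mathbf v,\mathbf v)^{-1}K(\mathbf v,y)$. One must check, for $\pi^\theta_{1,m}$-a.e.\ $\mathbf v$: that $\mathrm M_{\sqrt\theta+1_B}\mathrm K_\mathbf v\mathrm M_{\sqrt\theta+1_B}$ is trace class for every bounded Borel $B$, which follows from the corresponding hypothesis on $\mathrm K$ together with $(\sqrt\theta+1_B)(\cdot)\,K(\cdot,v_j)\in L^2(\mu)$ (a consequence of $|K(x,v)|^2\le K(x,x)K(v,v)$ when $\mathrm K$ is self-adjoint, and of a Hilbert--Schmidt factorization in general); and that $\det(1-\mathrm M_{\sqrt\theta}\mathrm K_\mathbf v\mathrm M_{\sqrt\theta})=L_\mathbf v[\theta]>0$, which holds on $\supp\,\pi^\theta_{1,m}$ by the identity $L_\mathbf v[\theta]=j^{\theta,\Lambda}_1(\mathbf v)/\rho(\mathbf v)$. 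Theorem \ref{theorem:condempty}(2) then exhibits $\mathbb P^\theta_{|\mathbf v}$ as the DPP with kernel of $\mathrm M_{1-\theta}\mathrm K_\mathbf v(1-\mathrm M_\theta\mathrm K_\mathbf v)^{-1}$, equivalently kernel $(1-\theta)(x)K^\theta_{|\mathbf v}(x,y)$, equivalently the DPP on $(\Lambda,\mu^\theta_0)$ with kernel $K^\theta_{|\mathbf v}$; and since $K_\mathbf v$ inherits self-adjointness from $\mathrm K$, the self-adjointness part of Theorem \ref{theorem:condempty}(2) gives that $\mathrm K^\theta_{|\mathbf v}$ is self-adjoint.

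The main obstacle I anticipate is transferring, for $\pi^\theta_{1,m}$-a.e.\ $\mathbf v$, the hypotheses on $\mathbb P$ to those needed for $\mathbb P_\mathbf v$ in order to invoke Theorem \ref{theorem:condempty}: namely $L_\mathbf v[\theta]>0$, the existence of $\epsilon>1$ with $L_\mathbf v[-\epsilon\theta]<\infty$, and $\rho(\mathbf v)>0$, $\det K(\mathbf v,\mathbf v)>0$. The key manipulation is to expand $L[-(1+\epsilon)\theta]$ (finite by \eqref{condition:theta}) as a sum over configurations, split off $m$ of the variables, and recognise the remaining inner sum as $\rho(\mathbf v)\,L_\mathbf v[-(1+\epsilon)\theta]$ (using $\rho_\mathbf v=\rho(\,\cdot\sqcup\mathbf v)/\rho(\mathbf v)$), so that finiteness of the full integral forces $L_\mathbf v[-(1+\epsilon)\theta]<\infty$ for $\mu^{\otimes m}_1$-a.e.\ $\mathbf v$, hence $\pi^\theta_{1,m}$-a.e.; the positivity statements follow along the same lines from \eqref{Macchi relation 1}--\eqref{Macchi relation 2} (in particular $\rho(\mathbf v)\ge j^{\theta,\Lambda}_1(\mathbf v)>0$ on $\supp\,\pi^\theta_{1,m}$). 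In the determinantal case, the only genuinely delicate point is the stability of the trace-class condition under the finite-rank perturbation $\mathrm K\mapsto\mathrm K_\mathbf v$, which one settles by the kernel bound above or, for non-self-adjoint $\mathrm K$, by writing the operators involved as products of Hilbert--Schmidt operators, as in the Remark following Theorem \ref{theorem:intro1}.
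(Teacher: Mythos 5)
Your proposal is correct and shares the paper's overall architecture: define $\mathbb P^\theta_{|\mathbf v}$ as the reduced Palm measure of $\mathbb P^\theta(\,\cdot\mid\xi_1(\Lambda)=m)$, obtain item (2) from the Palm disintegration \eqref{eq:Palmdisintegration} with a test function supported on mark-$1$ $m$-tuples, recognize the transformation as ``take the Palm measure, then condition on an empty observation'', and reduce item (4) to Theorem \ref{theorem:condempty}(2) applied to $\mathbb P_{\mathbf v}$. The genuine difference is in item (3). The paper never computes mixed correlation functions of the conditioned marked process: it inserts the joint multiplicative statistic $\prod(1-\phi_b(x))$, with an auxiliary mark-$1$ test function $\phi_1$, into \eqref{eq:disintegration}, evaluates both sides using \eqref{def:janossybmarked} and \eqref{Shirai-Takahashi theorem}, and, since $\phi_1$ is arbitrary, identifies the integrands $\pi^{\theta}_{1,m}$-a.e., which gives \eqref{eq:L} directly; the correlation functions are then read off from Theorem \ref{theorem:condempty}(1) together with $j^B_{\mathbf v}=j^B(\cdot\sqcup\mathbf v)/\rho(\mathbf v)$. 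You go the other way: you first compute $\rho^\theta_{|\mathbf v}$ from the mixed correlation/Janossy identity $\rho^Q\bigl((\mathbf x,0)\sqcup(\mathbf v,1)\bigr)=j^{\theta,\Lambda}_1(\mathbf x\sqcup\mathbf v)/\mathbb P^\theta(\xi_1(\Lambda)=m)$ --- an extra lemma you would need to prove by the same inclusion--exclusion underlying \eqref{def:janossybmarked}, justified by \eqref{condition:theta} --- and then deduce \eqref{eq:L} by matching correlation functions with $(\mathbb P_{\mathbf v})^\theta_{|\emptyset}$ and invoking uniqueness of point processes with given correlation functions under Assumptions \ref{regularity assumptions}. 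Both routes work: the paper's generating-functional trick dispenses with the mixed-density lemma and the uniqueness step and handles the a.e.-in-$\mathbf v$ bookkeeping automatically, whereas your route is more explicit than the paper about transferring the hypotheses to $\mathbb P_{\mathbf v}$ (finiteness of $L_{\mathbf v}[-(1+\epsilon)\theta]$, positivity $L_{\mathbf v}[\theta]=j^{\theta,\Lambda}_1(\mathbf v)/\rho(\mathbf v)>0$, trace-class stability under the finite-rank perturbation $\mathrm K\mapsto\mathrm K_{\mathbf v}$), points the paper leaves implicit. One small correction in your transfer argument: splitting $m$ variables off the expansion of $L[-(1+\epsilon)\theta]$ produces a combinatorial factor $(m+n)!/n!$ growing polynomially in $n$, so what you actually obtain is $L_{\mathbf v}[-(1+\epsilon')\theta]<\infty$ for every $\epsilon'<\epsilon$ (the polynomial being absorbed into a slightly smaller geometric rate), which is still more than enough to apply Theorem \ref{theorem:condempty}(1).
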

			\begin{proof}\(\)\\
				\textit{(1)}\ This follows directly from the corresponding general property of reduced Palm measures. \\	\medskip
				\noindent\textit{(2)}\ {Applying  \eqref{eq:Palmdisintegration} to $\mathbb P=\mathbb P^\theta(.|\xi_1(\Lambda)=m)$
					and
					\[\psi:\Lambda_{\{0,1\}}^m\times \mathcal N(\Lambda_{\{0,1\}})\to[0,+\infty):(\mathbf w,\xi_{0,1})\mapsto\begin{cases}\phi(\xi_{0,1}+\delta_{\mathbf w})&\mbox{if $\xi_1(\Lambda)=0$ and $\mathbf w\in (\Lambda\times\{1\})^m$,}\\
						0&\mbox{otherwise,}
					\end{cases}
					\]
					with $\phi:\mathcal N(\Lambda_{\{0,1\}})\to [0,+\infty)$, and denoting $\mathbf w=((v_1,1),\ldots, (v_m,1))$, $\mathbf v=(v_1,\ldots, v_m)$,
					we obtain a family of point processes $\left\{\mathbb P_{|\mathbf v}^{\theta}\right\}_{\mathbf v\in\Lambda^m}$ on $\Lambda$
					which is such that
					\[
					m!\mathbb E^\theta \left[\left.\phi(\xi_{0,1})\ \right|\xi_1(\Lambda)=m\right]=
					\int_{\Lambda^m}\mathbb E^{\theta|m}_{\mathbf w}\phi(\xi\otimes\delta_{0}+\delta_{\mathbf v}\otimes\delta_{1}) \frac{j_{1,m}^{\theta,\Lambda}(\mathbf v)}{\mathbb P^\theta(\xi_1(\Lambda)=m)} \dx^m\mu_1^\theta(\mathbf v),
					\]
					where we used the symmetry of $\psi$ and the fact that there are $m!$ ordered $m$-tuples $\mathbf w$ in $\supp\,\xi_{0,1}$ at the left, and the fact that the $m$-point correlation function of $\mathbb P^\theta\left(.|\xi_1(\Lambda)=m\right)$ evaluated at $\mathbf w$ is equal to $\frac{j_{1,m}^{\theta,\Lambda}(\mathbf v)}{\mathbb P^\theta(\xi_1(\Lambda)=m)}$ (by \eqref{def:janossybmarked}) at the right.
					Using \eqref{def:jpdfm}, we obtain the required disintegration.}
				
				\medskip
				
				\noindent\textit{(3)}\ 
				{Let us apply \eqref{eq:disintegration} to the multiplicative statistic
					\begin{equation*}
						\phi(\xi_{0,1})=\prod_{(x,b)\in{\rm supp}\,\xi_{0,1}}(1-\phi_b(x)),
					\end{equation*}
					where
					$\phi_0,\phi_1:\Lambda\to (-\infty,1]$ are Borel measurable and \(\phi_0\) has bounded support. If \(\phi_1=0\), the disintegration implies that for \(\pi_{1,m}^{\theta}\)-a.e.\ \(\mathbf v\in\Lambda^m\), \(\mathbb P^{\theta}_{|\mathbf v}\) satisfies Assumptions \ref{regularity assumptions} \textit{(3)}, thereby justifying the computations hereafter involving series and integrals. }

				{The right hand side of \eqref{eq:disintegration} is then equal to 
					\begin{align*}
						&\int_{\Lambda^m}L^\theta_{|\mathbf v}[\phi_0] \prod_{j=1}^m(1-\phi_1(v))\dx \pi_{1,m}^\theta(\mathbf v)=\frac{1}{m!\mathbb P^\theta(\xi_1(\Lambda)=m)}\int_{\Lambda^m}L^\theta_{|\mathbf v}[\phi_0] \prod_{j=1}^m(1-\phi_1(v))j_{1}^{\theta,\Lambda}(\mathbf v)\dx^m\mu^{\theta}_1(\mathbf v)\\
						&=\frac{1}{m!\mathbb P^\theta(\xi_1(\Lambda)=m)}\int_{\Lambda^m}L^\theta_{|\mathbf v}[\phi_0] \prod_{j=1}^m(1-\phi_1(v))\left(\sum_{n=0}^\infty\frac{(-1)^n}{n!}\int_{\Lambda^n}\rho(\mathbf u\sqcup \mathbf v)\dx^n\mu_0^\theta(\mathbf u)\right)\dx^m\mu_{1}^\theta(\mathbf v)
						\\
						&=\frac{1}{m!\mathbb P^\theta(\xi_1(\Lambda)=m)}\int_{\Lambda^m}L^\theta_{|\mathbf v}[\phi_0] L_{\mathbf v}[\theta]\prod_{j=1}^m(1-\phi_1(v))\rho(\mathbf v)\dx^m\mu_{1}^\theta(\mathbf v),\end{align*}
					by \eqref{def:jpdfm}, \eqref{def:janossybmarked}, and \eqref{Shirai-Takahashi theorem}.}
				
				\medskip
				
				{The left hand side of \eqref{eq:disintegration} is equal to 
					\begin{align*}&\sum_{n=0}^\infty\frac{(-1)^n}{n!\ m!\mathbb P^\theta(\xi_1(\Lambda)=m)}\int_{\Lambda^m}\int_{\Lambda^n}\prod_{k=1}^n\phi_0(u_k) \rho(\mathbf u\sqcup \mathbf v)\dx^n\mu_0^\theta(\mathbf u)
						\prod_{j=1}^m(1-\phi_1(v_j))\dx^m\mu_1^\theta(\mathbf v)\\
						&=\frac{1}{m!\mathbb P^\theta(\xi_1(\Lambda)=m)}\int_{\Lambda^m}\left(\sum_{n=0}^\infty\frac{(-1)^n}{n!}\int_{\Lambda^n}\prod_{k=1}^n\phi_0(u_k) \rho_{\mathbf v}(\mathbf u)\dx^n\mu_0^\theta(\mathbf u)\right)
						\prod_{j=1}^m(1-\phi_1(v_j))\rho(\mathbf v)\dx^m\mu_1^\theta(\mathbf v)
						\\
						&=\frac{1}{m!\mathbb P^\theta(\xi_1(\Lambda)=m)}
						\int_{\Lambda^m}
						L_{\mathbf v}[1-(1-\theta)(1-\phi_0)]
						\prod_{j=1}^m(1-\phi_1(v_j))\rho(\mathbf v)\dx^m\mu_1^\theta(\mathbf v).
					\end{align*}
					Since both sides are equal for any choice of $\phi_1$, we can conclude that \eqref{eq:L} holds. To compute the correlation functions, we note that the transformation under consideration is the composition of taking the Palm measure and then conditioning on observing no particles, as the form of the average multiplicative functional reveals. Since for \(\pi^{\theta}_{1,m}\)-a.e \(\mathbf x\in\Lambda ^m\) one has \(\rho(\mathbf x)\geq j^{\theta,\Lambda}_1(\mathbf x)>0\) by \eqref{def:janossybmarked}, the result follows from the corresponding one in Theorem \ref{theorem:condempty} after noticing that the Janossy densities of the Palm measure are given by \(j^B_\mathbf v(\mathbf x)=\frac{j^B(\mathbf x\sqcup\mathbf v)}{\rho(\mathbf v)}\), while recalling the convention \(j^B(\emptyset)=\mathbb P(\xi(B)=0)\).  }
				\medskip
				\(\)\\ \noindent\textit{(4)}\ This follows after a straightforward computation from \textit{(3)} and \eqref{eq:LaplaceFredholm}. If \(\mathrm K\) is self-adjoint then so is \(\mathrm K_\mathbf v\), thus the result follows again from \textit{(3)} and Theorem \ref{theorem:condempty}.
			\end{proof}
			\begin{remark}The disintegration in part \textit{(2)} of the above result is more general than \eqref{proto disintegration}: it suffices indeed to take \[\phi(\xi_{0,1})=1_{C}(\xi_0)1_{\{\xi_1(\Lambda)=m\}}({\xi_{0,1}}),\]
				to recover \eqref{proto disintegration}.
			\end{remark}
			\begin{remark}
				For the Poisson point process on $\Lambda$ with intensity $\rho$, the above result is again trivial. We then have that $\mathbb P^\theta_{|\mathbf v}=\mathbb P^\theta_{|\emptyset}=\mathbb P^\theta_0$, in other words the positions of the mark $1$ points do not carry any information about the mark $0$ points.
				\begin{remark}
The last part of the above result implies Theorem \ref{theorem:intro3}.
\end{remark}	

			\end{remark}

	}		
			
			\medskip
			\ \medskip
			
			\section{Number rigidity and DPPs corresponding to projection operators}\label{section:3}
			
			\subsection{DPPs induced by orthogonal projections}
			Let $\mathbb P$ be a DPP on $\Lambda$, defined by a correlation kernel $K$ with respect to a locally finite positive Borel measure $\mu$ which is such that the associated operator $\mathrm K:L^2(\Lambda,\mu)\to L^2(\Lambda,\mu)$ is a locally trace class orthogonal projection, i.e.\ $0\leq\mathrm K\leq 1$ and $\mathrm K^2=\mathrm K$, onto a closed subspace $H$ of $L^2(\Lambda,\mu)$.
			The rank of $\mathrm K$ can be finite or infinite, but the results in this section will only be non-trivial in the infinite rank case.
			We assume here that the kernel $K:\Lambda^2\to \mathbb C$ of $\mathrm K$ is such that $\mathrm Kf(x)$ is defined for every $x\in\Lambda$ and for every $f\in L^2(\Lambda,\mu)$. Note that this is true whenever $K(x,.)\in L^2(\Lambda,\mu)$ for every $x\in\Lambda$, by the Cauchy-Schwarz inequality.
			Classical examples of admissible point processes are the sine, Airy, and Bessel point processes on the real line.
			
			\medskip
			
			By Proposition \ref{prop:marked}, the marked point process associated to $\mathbb P$ with marking function $\theta$ is the DPP on $(\Lambda_{\{0,1\}},\mu^\theta)$ with correlation kernel
			$K^\theta((x,b),(x',b'))=K(x,x')$, where we recall that $\mu^\theta$ is given by \eqref{mutheta}.
			The induced operator $\mathrm K^\theta$ acting on $L^2(\Lambda_{\{0,1\}},\mu^\theta)$ is the orthogonal 
			projection operator onto the space \[H^\theta:=\left\{h_{0,1}\in L^2(\Lambda_{\{0,1\}},{\mu^\theta}):h_{0,1}(.,0)=h_{0,1}(.,1)\in H\right\},\] and it is straightforward to verify that ${\rm dim}\, H^\theta={\rm dim}\, H$.
			
			\medskip

			As in Section \ref{section:2}, let us consider the conditional measure obtained by conditioning the marked point process on a configuration of mark $1$ points. 
			Under the assumptions that $\mathbb P^\theta(\xi_1(\Lambda)=m)>0$ and \(\mathrm M_{\sqrt\theta +1_B}\mathrm K \mathrm M_{\sqrt\theta +1_B}\) is trace class for any bounded Borel set $B$, we know from Theorem \ref{theorem:condnonempty} (4) that for $\pi_{1,m}^\theta$-a.e.\ $\mathbf v\in \Lambda^m$, the conditional measure $\mathbb P^\theta_{|\mathbf v}$ is the DPP induced by the operator
			\[\mathrm M_{1-\theta}\mathrm K_{\mathbf v}(1-\mathrm M_\theta \mathrm K_{\mathbf v})^{-1}
			=(1-\mathrm M_{\theta})\mathrm K_{\mathbf v}(1-\mathrm M_\theta \mathrm K_{\mathbf v})^{-1}
			\]
			on $L^2(\Lambda,\mu)$.
			Moreover, from \eqref{def:Kv}, it is straightforward to verify that $\mathrm K_{\mathbf v}$ is the orthogonal projection on 
			the subspace 
			\begin{equation}\label{def:Hv}H_{\mathbf v}=\overline{\{h\in H: h(v)=0\ \forall v\in \mathbf v\}}.\end{equation}
			Consequently, since $\mathrm K_{\mathbf v}^2=\mathrm K_{\mathbf v}$, the $L^2(\Lambda,\mu)$-operator $\mathrm M_{1-\theta}\mathrm K_{\mathbf v}(1-\mathrm M_\theta \mathrm K_{\mathbf v})^{-1}$ inducing $\mathbb P^\theta_{|\mathbf v}$
			is equal to a conjugation of $\mathrm K_{\mathbf v}$,
			\[(1-\mathrm M_{\theta}\mathrm K_{\mathbf v})\mathrm K_{\mathbf v}(1-\mathrm M_\theta \mathrm K_{\mathbf v})^{-1},
			\]
			and this implies that it is a (not necessarily self-adjoint) projection onto the subspace
			\begin{equation}\label{def:Hvtheta}H_{\mathbf v}^\theta:=\overline{(1-\mathrm M_\theta \mathrm K_{\mathbf v})H_{\mathbf v}}=\overline{(1-\mathrm M_\theta)H_{\mathbf v}},\end{equation}
			with dimension equal to that of $H_{\mathbf v}$,
			and that the $L^2(\Lambda,\mu^\theta_0)$-operator $\mathrm K^\theta_{|\mathbf v}$ is the orthogonal projection onto $H_{\mathbf v}$. Indeed, $\mathrm K^\theta_{|\mathbf v}$ is Hermitian, 
and
for $h\in H_{\mathbf v}$, we have
			\begin{equation}\label{eq:proj}\mathrm K^\theta_{|\mathbf v} h=\mathrm K_{\mathbf v}(1-\mathrm M_\theta\mathrm K_{\mathbf v})^{-1}\mathrm M_{1-\theta}h=
\mathrm K_{\mathbf v}(1-\mathrm M_\theta\mathrm K_{\mathbf v})^{-1}\mathrm M_{1-\theta}\mathrm K_{\mathbf v}h			
			=\mathrm K_{\mathbf v}h=h.\end{equation}

\medskip

Let us now consider the more general case where $\mathbb P$ is induced by a not necessarily Hermitian projection operator, say $\mathrm K=\mathrm P_{H,J}$ is the unique linear
projection  with range $H$ and kernel $J^{\perp}$, where 
\(H,J\) are closed subspaces of \(L^2(\Lambda,\mu)\) such that \(H\oplus J^{\perp}=L^2(\Lambda,\mu)\).	Note that the adjoint projection is given by \(\mathrm P_{H,J}^*=\mathrm P_{J,H}\). Since \(\phi\in L^2(\Lambda,\mu)\) can be identified with \(\phi\in L^2(\Lambda,\mu^\theta_0)\), we can also see \(H,J\) as subspaces of \(L^2(\Lambda,\mu^\theta_0)\). Examples of DPPs induced by non-Hermitian projections are biorthogonal ensembles and their scaling limits like the Pearcey DPP.
			\begin{proposition}\label{prop:proj}
				If \(\mathrm K=\mathrm P_{H,J}\), then 
				\begin{equation*}
					\mathrm K^\theta_{|\mathbf v}=\mathrm P_{H_\mathbf v,J_\mathbf v},
				\end{equation*}
				where \( H_\mathbf v, J_\mathbf v\) are seen as closed subspaces of \(L^2(\Lambda,\mu^\theta_0)\).
			\end{proposition}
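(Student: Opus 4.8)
The argument runs parallel to the self-adjoint computation displayed just before the statement; the one new point is that, without self-adjointness, one has to pin down the range and the kernel of $\mathrm K^\theta_{|\mathbf v}$ separately, the kernel being obtained through an adjoint identity. First I would prove the non-Hermitian analogue of \eqref{def:Hv}, namely that $\mathrm K_\mathbf v=\mathrm P_{H_\mathbf v,J_\mathbf v}$ on $L^2(\Lambda,\mu)$. From $\mathrm K^2=\mathrm K$ one has $K(\cdot,v)\in H$ for each $v\in\mathbf v$ with $\rho(\mathbf v)>0$, and, applying this to $\mathrm K^*=\mathrm P_{J,H}$, also $\overline{K(v,\cdot)}\in J$. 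Writing \eqref{eq:Kv} as the rank-$m$ identity $\mathrm K_\mathbf v f=\mathrm K f-K(\cdot,\mathbf v)K(\mathbf v,\mathbf v)^{-1}(\mathrm K f)(\mathbf v)$, one checks directly that $\mathrm K_\mathbf v f\in H$ with $(\mathrm K_\mathbf v f)(v)=0$ for all $v\in\mathbf v$, and that $\mathrm K_\mathbf v h=h$ for every $h\in H$ vanishing on $\mathbf v$; hence $\mathrm K_\mathbf v$ is a bounded idempotent whose range is $H_\mathbf v$. A short computation with the same rank-$m$ formula (no multiplication operators enter) shows that the Palm reduction commutes with taking adjoints, $(\mathrm K_\mathbf v)^*=(\mathrm K^*)_\mathbf v$; running the preceding sentence for $\mathrm K^*$ shows that $(\mathrm K^*)_\mathbf v$ has range $J_\mathbf v$, so that $\ker\mathrm K_\mathbf v$, being the orthogonal complement of the range of $(\mathrm K_\mathbf v)^*$, equals $J_\mathbf v^{\perp}$, i.e.\ $\mathrm K_\mathbf v=\mathrm P_{H_\mathbf v,J_\mathbf v}$.

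Next I would transfer this through Theorem \ref{theorem:condnonempty}(4). Because $\mathrm K_\mathbf v^2=\mathrm K_\mathbf v$, the manipulation in \eqref{eq:proj} carries over verbatim: for $h\in H$ vanishing on $\mathbf v$, $\mathrm K^\theta_{|\mathbf v}h=\mathrm K_\mathbf v(1-\mathrm M_\theta\mathrm K_\mathbf v)^{-1}\mathrm M_{1-\theta}\mathrm K_\mathbf v h=\mathrm K_\mathbf v h=h$. The operator $\mathrm K^\theta_{|\mathbf v}$ is bounded on $L^2(\Lambda,\mu_0^\theta)$, being the composition of $\mathrm M_{1-\theta}:L^2(\mu_0^\theta)\to L^2(\mu)$, the bounded operator $\mathrm K_\mathbf v(1-\mathrm M_\theta\mathrm K_\mathbf v)^{-1}$ on $L^2(\mu)$, and the natural norm-$\le1$ map $L^2(\mu)\to L^2(\mu_0^\theta)$; hence $\mathrm K^\theta_{|\mathbf v}$ is the identity on the whole $L^2(\mu_0^\theta)$-closure $H_\mathbf v$ of $\{h\in H:h|_\mathbf v=0\}$. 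On the other hand $\mathrm K^\theta_{|\mathbf v}f=\mathrm K_\mathbf v(\cdots)$ always lies in the range of $\mathrm K_\mathbf v$, which embeds into $H_\mathbf v\subseteq L^2(\Lambda,\mu_0^\theta)$; so the range of $\mathrm K^\theta_{|\mathbf v}$ is exactly $H_\mathbf v$ and $\mathrm K^\theta_{|\mathbf v}$ is a bounded idempotent onto $H_\mathbf v$.

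Finally I would locate $\ker\mathrm K^\theta_{|\mathbf v}$. Since $\theta$ is real, the $L^2(\mu_0^\theta)$-adjoint of the integral operator with kernel $K^\theta_{|\mathbf v}$ has kernel $\overline{K^\theta_{|\mathbf v}(y,x)}$; combining $(\mathrm K_\mathbf v)^*=(\mathrm K^*)_\mathbf v$ with the elementary identity $[A(1-BA)^{-1}]^*=A^*(1-BA^*)^{-1}$ (here with $B=\mathrm M_\theta$ self-adjoint), this kernel is precisely the one defining $(\mathrm K^*)^\theta_{|\mathbf v}$, so $(\mathrm K^\theta_{|\mathbf v})^*=(\mathrm K^*)^\theta_{|\mathbf v}$. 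By the previous paragraph applied to $\mathrm K^*=\mathrm P_{J,H}$, this operator has range $J_\mathbf v$, whence $\ker\mathrm K^\theta_{|\mathbf v}$, the orthogonal complement of the range of $(\mathrm K^\theta_{|\mathbf v})^*$, equals $J_\mathbf v^{\perp}$. A bounded idempotent with range $H_\mathbf v$ and kernel $J_\mathbf v^{\perp}$ is, by definition, the projection $\mathrm P_{H_\mathbf v,J_\mathbf v}$ (so in particular $L^2(\Lambda,\mu_0^\theta)=H_\mathbf v\oplus J_\mathbf v^{\perp}$), which is the claim.

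The step I expect to cost the most care is the bookkeeping between the two reference measures $\mu$ and $\mu_0^\theta$ in the last two paragraphs: checking that $\mathrm K^\theta_{|\mathbf v}$ is genuinely a bounded operator on $L^2(\Lambda,\mu_0^\theta)$, that the $L^2(\mu)$-range of $\mathrm K_\mathbf v$ embeds into the $L^2(\mu_0^\theta)$-closure appearing in the statement (so that ``identity on a dense subspace'' upgrades to ``identity on $H_\mathbf v$'' in the correct topology), and that $\mathrm K^*$ inherits the standing trace-class hypotheses on $\mathrm K$ so that $(\mathrm K^*)^\theta_{|\mathbf v}$ is well defined (it does, adjoints of trace-class operators being trace class). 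The algebraic heart — the rank-$m$ perturbation formula, the conjugation identity behind \eqref{eq:proj}, and idempotency — is then routine.
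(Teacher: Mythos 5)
Your proof is correct and follows essentially the same route as the paper: establish $\mathrm K_\mathbf v=\mathrm P_{H_\mathbf v,J_\mathbf v}$ (the paper's ``$\theta=0$'' case, which it calls straightforward from \eqref{def:Kv}), use the computation \eqref{eq:proj} — which indeed only needs idempotency of $\mathrm K_\mathbf v$, not self-adjointness — to see that $\mathrm K^\theta_{|\mathbf v}$ is a projection with range $H_\mathbf v$, and identify the null space via the adjoint identity $(\mathrm K^\theta_{|\mathbf v})^*=(\mathrm K^*)^\theta_{|\mathbf v}$. The only difference is organizational: the paper reduces to the two cases $\theta=0$ and $\mathbf v=\emptyset$ using the composition structure of the transformation, whereas you run the same two arguments directly on the composite operator, adding some (harmless) extra bookkeeping about the measures $\mu$ versus $\mu_0^\theta$.
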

			\begin{proof}
				First we recall that the transformation \(\mathrm K^\theta_{|\mathbf v}\) is obtained by first taking the reduced Palm measure and then conditioning on $\xi_1=\emptyset$ in view of Theorem \ref{theorem:condnonempty}, so that it suffices to prove the result separately in the cases \(\theta=0\) and \(\mathbf v=\emptyset\). The case $\theta=0$ is straightforward from \eqref{def:Kv}, while for $\mathbf v=\emptyset$, $\mathrm K^\theta_{|\emptyset}$ is a projection with range $H$ by \eqref{eq:proj} (with $\mathbf v=\emptyset$; observe that these equalities continue to hold when $\mathrm K$ is not self-adjoint).
				Finally, to identity the kernel, it suffices to apply the previous reasoning to \((\mathrm K^\theta_{|\emptyset})^*=(\mathrm K^*)^\theta_{|\emptyset}\).
			\end{proof}
			\(\)\\
			DPPs induced by projections have the property that the number of points in a configuration is almost surely equal to the rank of the projection \cite{Soshnikov}. If $\mathbb P$ has configurations with a deterministic number of points, it is obvious that the same must hold for $\mathbb P^\theta_{|\mathbf v}$, for any finite configuration $\mathbf v$. 
			Since the projection $P_{H_{\mathbf v}, J_{\mathbf v}}$ is also defined for infinite configurations $\mathbf v$, it is natural to ask whether the DPP induced by this projection can in such a situation still be interpreted as the conditional DPP $\mathbb P_{|\mathbf v}^\theta$. This is not true in general, see e.g.\  \cite{Bufetov5}, but we will see below that $\mathbb P^\theta_{|\mathbf v}$ is under suitable assumptions induced by an orthogonal projection, albeit not necessarily equal to $P_{H_{\mathbf v}, J_{\mathbf v}}$.

			\subsection{Disintegration}
			
			We first show that the family of conditional ensembles $\left\{\mathbb P^\theta_{|{\mathbf v}}\right\}_{\delta_{\mathbf v}\in\mathcal N(\Lambda)}$ exists under general conditions, and then we rely on results from \cite{BufetovQiuShamov} to prove that $\mathbb P^\theta_{|\mathbf v}$ is a DPP induced by a Hermitian operator $\mathrm K^\theta_{|\mathbf v}$ if $\mathbb P$ is a DPP induced by an orthogonal projection.
			\begin{proposition}\label{prop:conditionalinf}
				Let $\theta:\Lambda\to [0,1]$ be measurable, and let $\mathbb P$ satisfy Assumptions \ref{regularity assumptions}. There exists a family of point processes $\left\{\mathbb P_{|{\mathbf v}}^\theta\right\}_{\delta_{\mathbf v}\in\mathcal N(\Lambda)}$ such that the following conditions hold.
				\begin{enumerate}
					\item The map 
					\(\delta_{\mathbf v}\in\mathcal N(\Lambda)\mapsto\mathbb P_{|{\mathbf v}}^\theta(C)\) is \(\mathcal C(\Lambda)\)-measurable for any \(C\in\mathcal C(\Lambda) \), and the
					disintegration
					\[\mathbb P_0^\theta(C)=\int_{\mathcal N(\Lambda)}\mathbb P_{|{\mathbf v}}^\theta(C) \dx\mathbb P_1^\theta(\delta_{\mathbf v})\]
					holds for any $C\in\mathcal C(\Lambda)$.
					\item If $\mathbb P$ is a DPP induced by an orthogonal projection with kernel
					$K:\Lambda^2\to \mathbb R$ such that $\mathrm Kf(x)$ is defined for every $x\in\Lambda$ and for every $f\in L^2(\Lambda,\mu)$, then for $\mathbb P_1^\theta$-a.e.\  $\delta_{\mathbf v}$, $\mathbb P_{|{\mathbf v}}^\theta$ is a DPP induced by a Hermitian locally trace class operator $\mathrm K_{|{\mathbf v}}^\theta$.
				\end{enumerate} 
			\end{proposition}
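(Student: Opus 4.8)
The plan is to derive part (1) from the standard disintegration theorem for probability measures on Polish spaces, and part (2) by recognizing our conditioning as conditioning a projection DPP on its restriction to a Borel subset and then invoking \cite{BufetovQiuShamov}.

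For part (1), I would first note that $(\mathcal N(\Lambda),\mathcal C(\Lambda))$ and $(\mathcal N(\Lambda_{\{0,1\}}),\mathcal C(\Lambda_{\{0,1\}}))$ are standard Borel spaces, and that the restriction map $T:\xi_{0,1}\mapsto\xi_1$ is measurable with $T_*\mathbb P^\theta=\mathbb P_1^\theta$. The disintegration theorem (see e.g.\ \cite{DaleyVereJones}) then yields a $\mathbb P_1^\theta$-a.e.\ unique family $\{Q_{\delta_{\mathbf v}}\}_{\delta_{\mathbf v}\in\mathcal N(\Lambda)}$ of probability measures on $\mathcal N(\Lambda_{\{0,1\}})$, measurable in $\delta_{\mathbf v}$, with each $Q_{\delta_{\mathbf v}}$ concentrated on $\{\xi_{0,1}:\xi_1=\delta_{\mathbf v}\}$ and $\mathbb P^\theta=\int Q_{\delta_{\mathbf v}}\,d\mathbb P_1^\theta(\delta_{\mathbf v})$. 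I would then define $\mathbb P^\theta_{|\mathbf v}$ as the pushforward of $Q_{\delta_{\mathbf v}}$ under $\xi_{0,1}\mapsto\xi_0$; specializing the disintegration identity to the events $\{\xi_0\in C\}$, $C\in\mathcal C(\Lambda)$, gives both the measurability of $\delta_{\mathbf v}\mapsto\mathbb P^\theta_{|\mathbf v}(C)$ and the formula $\mathbb P_0^\theta(C)=\int_{\mathcal N(\Lambda)}\mathbb P^\theta_{|\mathbf v}(C)\,d\mathbb P_1^\theta(\delta_{\mathbf v})$. Finally, I would record that when $\delta_{\mathbf v}$ is finite and $\theta$ satisfies \eqref{condition:theta}, this family coincides $\mathbb P_1^\theta$-a.e.\ with the one of Theorem \ref{theorem:condnonempty}, since \eqref{eq:disintegration} exhibits the latter as a disintegration of $\mathbb P^\theta(\,\cdot\mid\xi_1(\Lambda)=m)$ along $T$ and disintegrations are a.e.\ unique.

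For part (2), recall from Proposition \ref{prop:marked} and the discussion opening this section that $\mathbb P^\theta$ is the DPP on $L^2(\Lambda_{\{0,1\}},\mu^\theta)$ whose kernel $\mathrm K^\theta$ is the locally trace class orthogonal projection onto $H^\theta$, and that the hypothesis that $\mathrm Kf(x)$ is defined for all $x\in\Lambda$ and $f\in L^2(\Lambda,\mu)$ passes to $\mathrm K^\theta$. Here $T$ is exactly the restriction of a configuration on $\Lambda_{\{0,1\}}$ to the Borel subset $\Lambda\times\{1\}$, whose complement $\Lambda\times\{0\}$ we identify with $(\Lambda,\mu_0^\theta)$ by dropping the mark. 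I would then apply the results of \cite{BufetovQiuShamov} on conditional measures of DPPs induced by orthogonal projections: they provide a disintegration of $\mathbb P^\theta$ along $T$ such that, for $\mathbb P_1^\theta$-a.e.\ $\delta_{\mathbf v}$, the conditional measure restricted to $\Lambda\times\{0\}$ is the DPP on $L^2(\Lambda,\mu_0^\theta)$ induced by an orthogonal, hence Hermitian, projection $\mathrm K^\theta_{|\mathbf v}$; by a.e.-uniqueness of disintegrations this conditional measure is precisely the $\mathbb P^\theta_{|\mathbf v}$ of part (1). To see that $\mathrm K^\theta_{|\mathbf v}$ is moreover locally trace class, i.e.\ $\int_B K^\theta_{|\mathbf v}(x,x)\,d\mu_0^\theta(x)<\infty$ for bounded $B\in\mathcal B_\Lambda$, I would integrate against $\mathbb P_1^\theta$ and use the disintegration: $\int_{\mathcal N(\Lambda)}\mathbb E^\theta_{|\mathbf v}[\xi_0(B)]\,d\mathbb P_1^\theta(\delta_{\mathbf v})=\mathbb E^\theta[\xi_0(B)]\le\mathbb E\,\xi(B)<\infty$ by Assumptions \ref{regularity assumptions}, so the integrand is finite for $\mathbb P_1^\theta$-a.e.\ $\delta_{\mathbf v}$; running over a countable base of bounded Borel sets and intersecting the full-measure sets yields the claim off a $\mathbb P_1^\theta$-null set.

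The soft measure theory of part (1) and the local-trace-class verification are routine; the main obstacle is step (2), namely checking that conditioning the marked projection DPP $\mathbb P^\theta$ on its mark-$1$ configuration is genuinely an instance of the framework of \cite{BufetovQiuShamov} (Polish phase space $\Lambda_{\{0,1\}}$, locally trace class orthogonal projection $\mathrm K^\theta$, pointwise-defined kernel, conditioning on a Borel subset), and carrying out the uniqueness argument identifying their disintegration with the one of part (1). A point to keep in mind is that $\Lambda\times\{1\}$ may carry infinite $\mu^\theta$-mass (for instance when $\theta=1_{B^c}$), so that $\delta_{\mathbf v}$ is almost surely an infinite configuration and the finite-rank-perturbation description from Section \ref{section:2} no longer applies; this is exactly the regime \cite{BufetovQiuShamov} covers, but it has to be invoked in that generality.
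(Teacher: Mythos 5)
Your proposal follows essentially the same route as the paper: part (1) is the disintegration theorem applied to the restriction map $\xi_{0,1}\mapsto\xi_1$ followed by pushing forward to the mark-$0$ component, and part (2) is an application of \cite[Lemma 1.11]{BufetovQiuShamov} to the marked projection DPP $\mathbb P^\theta$ with the window $W=\Lambda\times\{1\}$, exactly as in the paper. One correction, though: you assert that the cited result yields a conditional kernel that is an \emph{orthogonal projection}; it does not — it yields a Hermitian (self-adjoint, locally trace class, $0\leq\cdot\leq 1$) operator, and the paper explicitly remarks after this proposition that $\mathrm K^\theta_{|\mathbf v}$ is in general \emph{not} a projection (cf.\ \cite{Bufetov5}); upgrading it to a projection is precisely the nontrivial content of Theorem \ref{theorem:rigidity}(2) under extra hypotheses. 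Since the proposition only claims a Hermitian locally trace class operator, your argument survives once "orthogonal projection" is replaced by "self-adjoint contraction", and your additional verification of the locally trace class property (which is already part of the cited lemma) is harmless.
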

			\begin{proof}
				Let us define $\widehat{\mathbb P}_{|{\mathbf v}}^\theta$ by disintegrating $\mathbb P^\theta$ with respect to the surjective mapping
				\[r:\mathcal N(\Lambda_{\{0,1\}})\to\mathcal N(\Lambda\times\{1\}):\xi_0\otimes \delta_0+\xi_1\otimes \delta_1\mapsto \xi_1\otimes\delta_1.\]
				The disintegration theorem then implies that the map $\delta_{\mathbf v}\mapsto \widehat{\mathbb P}_{|{\mathbf v}}^\theta(\widetilde C)$ is Borel measurable for any $\widetilde C\in\mathcal C(\Lambda_{\{0,1\}})$, and that
				\[\mathbb P^\theta(\widetilde C)=\int_{\mathcal N(\Lambda_{\{0,1\}})} \widehat{\mathbb P}_{|{\mathbf v}}^\theta(\widetilde C)\dx\mathbb P^\theta(r^{-1}(\delta_{\mathbf v}\otimes\delta_{1})).\]
				Taking $\widetilde C=C\otimes\{\delta_0\}\subset\mathcal N(\Lambda\times\{0\})$ with $C\in \mathcal C(\Lambda)$ and defining $\mathbb P^\theta_{|{\mathbf v}}$ on $\Lambda$ as
				${\mathbb P}_{|\mathbf v}^\theta(C):=\widehat{\mathbb P}_{|{\mathbf v}}^\theta(\widetilde C),
				$
				this becomes
				\[\mathbb P^\theta_0(C)=\int_{\mathcal N(\Lambda)} {\mathbb P}_{|{\mathbf v}}^\theta(C)\dx\mathbb P^\theta_1(\delta_{\mathbf v}),\]
				and part {\em (1)} of the theorem is proved.

				\medskip
				
				Part (2) follows directly upon applying  \cite[Lemma 1.11]{BufetovQiuShamov} to the marked point process $\mathbb P^\theta$ and $W=\Lambda\times\{1\}$. 
			\end{proof}
			\begin{remark}
				It is important to note that the operator $\mathrm K^\theta_{|\mathbf v}$ is not necessarily a projection in part (2) of the above result.
			\end{remark}
			
			\subsection{Marking rigidity}
			
			We will now further refine our assumptions on $\mathbb P$, in order to obtain a sufficient condition for $\mathbb P$ to be marking rigid. Let us emphasize that we will not need $\mathbb P$ to be a DPP. However, DPPs induced by integrable orthogonal projection operators will provide our main example of point processes which satisfy the assumption below.
			
			\begin{assumptions}\label{assumption rigid}
				$\mathbb P$ satisfies Assumptions \ref{regularity assumptions} and is such that the following holds: for any $\epsilon>0$ and for any bounded $B\in\mathcal B_\Lambda$, there exists a bounded measurable function $f:\Lambda\to[0,+\infty)$ with bounded support such that \[\left.f\right|_{B}=1,\qquad {\rm Var}{\int_\Lambda f\dx\xi<\epsilon},\]
				where ${\rm Var}$ denotes the variance with respect to $\mathbb P$.
			\end{assumptions}

			\begin{theorem}\label{theorem:rigidity}
				Let $\mathbb P$ satisfy Assumptions \ref{assumption rigid}. \begin{enumerate}
					\item If for any measurable $\theta:\Lambda\to[0,1]$, $\mathbb P^\theta(\xi_0(\Lambda)<\infty)$ is either $0$ or $1$, then $\mathbb P$ is marking rigid.
					\item Let $\mathbb P$ be a DPP induced by a locally trace class orthogonal projection
					with kernel
					$K:\Lambda^2\to \mathbb C$ such that $\mathrm Kf(x)$ is defined for every $x\in\Lambda$ and for every $f\in L^2(\Lambda,\mu)$. Then $\mathbb P$ is marking rigid, and for any measurable $\theta:\Lambda\to[0,1]$ such that $\mathrm M_{1-\theta}\mathrm K$ is trace class, the conditional ensemble $\mathbb P^\theta_{|\mathbf v}$ is for $\mathbb P^\theta_1$-a.e.\ $\delta_{\mathbf v}$ induced by a finite rank orthogonal projection $\mathrm K^\theta_{|\mathbf v}$.\end{enumerate}
			\end{theorem}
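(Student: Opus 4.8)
For part (1), fix a measurable $\theta:\Lambda\to[0,1]$ and work on the marked space, where the unmarked configuration $\xi=\xi_0\cup\xi_1$ has law $\mathbb P$. If $\mathbb P^\theta(\xi_0(\Lambda)<\infty)=0$, then disintegrating the identity $\mathbb P^\theta(\xi_0(\Lambda)=\infty)=1$ via Proposition~\ref{prop:conditionalinf}(1) gives $\mathbb P^\theta_{|\mathbf v}(\xi(\Lambda)=\infty)=1$ for $\mathbb P^\theta_1$-a.e.\ $\delta_{\mathbf v}$, so $\ell\equiv\infty$ works; assume therefore $\mathbb P^\theta(\xi_0(\Lambda)<\infty)=1$. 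The plan is to show that $\xi_0(B)$ is a.s.\ determined by $\xi_1$ and by $\xi_0$ outside $B$, and then to remove the second dependence using a.s.\ finiteness of $\xi_0(\Lambda)$. Fix an exhausting sequence of balls $B_n\uparrow\Lambda$. Given $n$ and $\epsilon>0$, Assumptions~\ref{assumption rigid} provide $f\ge 0$ bounded with bounded support, $f|_{B_n}=1$ and ${\rm Var}_{\mathbb P}\int f\,\mathrm d\xi<\epsilon$. Writing $\xi(f):=\int f\,\mathrm d\xi$ and using $f\,1_{B_n}=1_{B_n}$, we get
\[
\xi_0(B_n)=\xi(f)-\xi_1(f)-(\xi_0|_{B_n^c})(f),
\]
where all three terms lie in $L^2(\mathbb P^\theta)$ by Assumptions~\ref{regularity assumptions}(3) and the fact that $\xi$ has law $\mathbb P$ under $\mathbb P^\theta$, and where the last two are measurable with respect to $\mathcal A_n:=\sigma(\xi_1)\vee\sigma(\xi_0|_{B_n^c})$. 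Hence $\xi_0(B_n)$ lies within $L^2(\mathbb P^\theta)$-distance $\sqrt\epsilon$ of an $\mathcal A_n$-measurable function; since $L^2(\mathcal A_n)$ is a closed subspace and $\epsilon$ is arbitrary, $\xi_0(B_n)$ is $\mathbb P^\theta$-a.s.\ equal to an $\mathcal A_n$-measurable function $h_n(\xi_1,\xi_0|_{B_n^c})$, with $h_n$ jointly measurable by Doob's lemma.

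To globalise, observe that on the full-probability event where these identities hold for all $n$ and $\xi_0(\Lambda)<\infty$, all mark $0$ points lie in some $B_N$, so $\xi_0|_{B_n^c}=\emptyset$ and $\xi_0(B_n)=\xi_0(\Lambda)$ for $n\ge N$; therefore $\xi_0(\Lambda)=\limsup_{n}h_n(\xi_1,\emptyset)$ $\mathbb P^\theta$-a.s. The right-hand side is a $\mathcal C(\Lambda)$-measurable function of $\delta_{\mathbf v}=\xi_1$, which after redefinition on a measurable $\mathbb P^\theta_1$-null set so as to be $\mathbb N\cup\{0,\infty\}$-valued we take as $\ell_{\mathbf v}$. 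Disintegrating the a.s.\ identity $\xi_0(\Lambda)=\ell_{\xi_1}$ over $\xi_1$ (in the version of the disintegration used in the proof of Proposition~\ref{prop:conditionalinf}, applied to the event $\{\xi_0(\Lambda)=\ell_{\xi_1}\}$) then yields $\mathbb P^\theta_{|\mathbf v}(\xi(\Lambda)=\ell_{\mathbf v})=1$ for $\mathbb P^\theta_1$-a.e.\ $\delta_{\mathbf v}$, i.e.\ marking rigidity. The main obstacle, and the reason the hypothesis on $\mathbb P^\theta(\xi_0(\Lambda)<\infty)$ is imposed, is precisely this globalisation: without a.s.\ finiteness of $\xi_0(\Lambda)$ the residual dependence of $h_n$ on $\xi_0|_{B_n^c}$ does not vanish and $\xi_0(\Lambda)$ need not be $\sigma(\xi_1)$-measurable.

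For part (2), note that the $L^2$-argument above — which does not use the finiteness hypothesis — shows for every bounded Borel $B$ (reduce to $B\subseteq B_n$) that $\xi_0(B)$ is a.s.\ $\sigma(\xi_1)\vee\sigma(\xi_0|_{B^c})$-measurable; disintegrating over $\xi_1$ and intersecting over $n$, we find that for $\mathbb P^\theta_1$-a.e.\ $\delta_{\mathbf v}$ the conditional ensemble $\mathbb P^\theta_{|\mathbf v}$ is number rigid. By Proposition~\ref{prop:conditionalinf}(2), for $\mathbb P^\theta_1$-a.e.\ $\delta_{\mathbf v}$ this ensemble is also a DPP induced by a Hermitian locally trace class operator $\mathrm K^\theta_{|\mathbf v}$; since a number rigid DPP of a Hermitian operator must be induced by an orthogonal projection \cite{GhoshKrishnapur}, $\mathrm K^\theta_{|\mathbf v}$ is an orthogonal projection for a.e.\ $\delta_{\mathbf v}$, and then the number of points of $\mathbb P^\theta_{|\mathbf v}$ is a.s.\ equal to ${\rm rank}\,\mathrm K^\theta_{|\mathbf v}$ \cite{Soshnikov}. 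This gives marking rigidity, with $\ell_{\mathbf v}={\rm rank}\,\mathrm K^\theta_{|\mathbf v}$ realised measurably as $\lim_n\mathbb E^\theta_{|\mathbf v}[\xi(\Lambda)\wedge n]$. Finally, if $\mathrm M_{1-\theta}\mathrm K$ is trace class then $\mathbb E^\theta\xi_0(\Lambda)=\int(1-\theta(x))K(x,x)\,\mathrm d\mu(x)={\rm Tr}\,\mathrm M_{1-\theta}\mathrm K<\infty$, so by the disintegration $\mathbb E^\theta_{|\mathbf v}\xi(\Lambda)={\rm rank}\,\mathrm K^\theta_{|\mathbf v}<\infty$ for $\mathbb P^\theta_1$-a.e.\ $\delta_{\mathbf v}$, i.e.\ $\mathrm K^\theta_{|\mathbf v}$ has finite rank. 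The routine points to verify are the $L^2(\mathbb P^\theta)$-integrability of the functionals used (via Assumptions~\ref{regularity assumptions}(3)), the joint measurability of the $h_n$, and the fact that the disintegration of Proposition~\ref{prop:conditionalinf} may be evaluated on events depending measurably on $\xi_1$.
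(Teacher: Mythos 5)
Your proof is correct, and it diverges from the paper's in an interesting way. For part (1) you follow the same core idea as the paper (use the low-variance linear statistics $f_n$ of Assumptions \ref{assumption rigid} and the a.s.\ finiteness of $\xi_0$ to kill the contribution of $\xi_0$ outside $B_n$), but you implement it via the Ghosh--Peres $L^2$-projection argument, showing $\xi_0(B_n)$ is a.s.\ $\sigma(\xi_1)\vee\sigma(\xi_0|_{B_n^c})$-measurable, whereas the paper uses Chebyshev plus a.s.\ convergence along a subsequence, which has the side benefit of producing the explicit formula $\ell_{\mathbf v}=\lim_j\bigl(\mathbb E\int f_{n_j}\,\dx\xi-\int f_{n_j}\,\dx\delta_{\mathbf v}\bigr)$ highlighted in the remark after the theorem; your $\ell_{\mathbf v}$ is obtained more abstractly (and your measurability argument via $\limsup_n h_n(\cdot,\emptyset)$ is, if anything, cleaner than the paper's). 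In both treatments the disintegration of Proposition \ref{prop:conditionalinf} is evaluated on a joint event whose ``target'' depends measurably on $\xi_1$; this needs the fiber-concentration property of the disintegration, which the paper also uses implicitly, so you are on equal footing there. Part (2) is where you genuinely depart: the paper verifies the $0$--$1$ hypothesis of part (1) by applying Soshnikov's Theorem 4 to the thinned DPP $\mathbb P_0^\theta$ (a.s.\ finiteness of $\xi_0(\Lambda)$ iff the trace is finite), then uses part (1) together with the Hermitian conditional kernels of Proposition \ref{prop:conditionalinf}(2) and Soshnikov's characterization ``deterministic number $\Leftrightarrow$ projection''; you instead bypass the dichotomy entirely, prove number rigidity of the conditional ensembles directly from the $L^2$ argument, and then import the converse theorem of \cite{GhoshKrishnapur} (number rigid DPP with Hermitian kernel $\Rightarrow$ projection) before invoking \cite{Soshnikov}. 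Your route buys number rigidity of $\mathbb P^\theta_{|\mathbf v}$ as a by-product and avoids discussing when $\xi_0(\Lambda)$ is a.s.\ finite, but it leans on a deep external result whose hypotheses (Hermitian locally trace class kernel, the generality of the underlying space $\Lambda$) you should verify in this setting, whereas the paper's argument is self-contained modulo Soshnikov's theorem and yields at the same time the explicit finite/infinite dichotomy in terms of $\mathrm{Tr}\,\mathrm M_{1-\theta}\mathrm K$; your first-moment computation recovering the finite-rank claim under the trace-class hypothesis is fine and essentially equivalent to the paper's use of that dichotomy.
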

			\begin{proof}
{	
			Let us first consider the case where $\theta$ is such that $\mathbb P^\theta(\xi_0(\Lambda)<\infty)=0$, when $\mathbb P^\theta$-a.s., we have $\xi_0(\Lambda)=\infty$. Then, by Proposition \ref{prop:conditionalinf}, 
				\[1=\mathbb P_0^\theta(\xi(\Lambda)=\infty)=\int_{\mathcal N(\Lambda)}\mathbb P^\theta_{|\mathbf v}(\xi(\Lambda)=\infty)\dx\mathbb P_1^\theta(\delta_{\mathbf v}),\]
				and consequently $\mathbb P^\theta_{|{\mathbf v}}(\xi(\Lambda)=\infty)=1$ for $\mathbb P_1^\theta$-a.e.\ $\delta_{\mathbf v}$.
				
				\medskip
				
				We now assume that $\mathbb P^\theta(\xi_0(\Lambda)<\infty)=1$. Let $\Lambda_1\subset\Lambda_2\subset\cdots$ be an exhausting sequence of bounded Borel subsets of $\Lambda$.
				First, we observe that for $\mathbb P^\theta$-a.s. $\xi_0$, $\xi_0(\Lambda\setminus \Lambda_n)= 0$ for $n$ sufficiently large.
				Secondly, we take a sequence of positive numbers $\epsilon_1,\epsilon_2,\ldots$ which converges to $0$ as $n\to\infty$, and we observe that by Assumptions \ref{assumption rigid}, there exists a sequence of bounded measurable functions $f_1,f_2,\ldots$ with bounded support such that $\left.f_n\right|_{\Lambda_n}=1$, and such that
				${\rm Var }\int_\Lambda f_n\dx\xi<\epsilon_n$.
				This implies 
				by Chebyshev's inequality that
				\[\mathbb P\left(\left|\int_\Lambda f_n\dx\xi-\mathbb E \int_\Lambda f_n\dx\xi\right|\geq\delta\right)\leq \delta^{-2}{\rm Var}\,\int_\Lambda f_n\dx\xi\leq \delta^{-2}\epsilon_n\to 0,\]
				as $n\to\infty$ for any $\delta>0$, and hence 
				that 
there exists a subsequence $\left(f_{n_j}\right)_{j\in\mathbb N}$				
				such that
				\[\lim_{j\to\infty}\left(\int_\Lambda f_{n_j}\dx\xi-\mathbb E \int_\Lambda f_{n_j}\dx\xi\right)=0,\qquad \mbox{for $\mathbb P$-a.e.\ $\xi$},\] 
or equivalently with \textcolor{orange}{}\(\hat f_n(x,b)=f_n(x)\)
\[
\lim_{j\to\infty}\left(\int_{\Lambda_{\{0,1\}}} \hat f_{n_j}\dx\xi_{0,1}-\mathbb E^\theta \int_{\Lambda_{\{0,1\}}} \hat f_{n_j}\dx\xi_{0,1}\right)=0,\qquad \mbox{for $\mathbb P^\theta$-a.e.\ $\xi_{0,1}$}.
%\lim_{j\to\infty}\left(\int f_{n_j}(x)\dx(\xi_0+\xi_1)(x)-\mathbb E \int f_{n_j}(x)\dx\xi(x)\right)=0,\qquad \mbox{for $\mathbb P^\theta$-a.e.\ $\xi_{0,1}$}.
\] 
				
				\medskip
				
For any $\xi_{0,1}\in\mathcal N(\Lambda_{\{0,1\}})$, we can write $\xi_0(\Lambda)$ as
				\begin{align*}\xi_0(\Lambda)&
%					=\int f_{n_j}(x)\dx(\xi_{0}+\xi_1)(x)+\int (1-f_{n_j}(x))\dx\xi_0(x)-\int f_{n_j}(x)\dx\xi_1(x)\\
%					&
%					=\left(\int f_{n_j}(x)\dx(\xi_0+\xi_1)(x)-\mathbb E\int f_{n_j}(x)\dx\xi(x)\right)+\mathbb E\int f_{n_j}(x)\dx\xi(x)+\int (1-f_{n_j}(x))\dx \xi_0(x)-\int f_{n_j}(x)\dx \xi_1(x).
					=\int_{\Lambda_{\{0,1\}}}1_{\{b=0\}}\dx\xi_{0,1}(x,b)=\int_{\Lambda_{\{0,1\}}}\hat f_{n_j}\dx\xi_{0,1}+\int_\Lambda (1-f_{n_j})\dx\xi_0-\int_\Lambda f_{n_j}\dx\xi_1\\
					&
					=\left(\int_{\Lambda_{\{0,1\}}} \hat f_{n_j}\dx\xi_{0,1}-\mathbb E^\theta \int_{\Lambda_{\{0,1\}}} \hat f_{n_j}\dx\xi_{0,1}\right)+\mathbb E\int_\Lambda f_{n_j}\dx\xi+\int_\Lambda (1-f_{n_j})\dx \xi_0-\int_\Lambda f_{n_j}\dx \xi_1.
				\end{align*}
Taking the limit $j\to\infty$,				 the part between parentheses at the right converges to $0$ $\mathbb P^\theta$-a.s., and
				the term $\int_\Lambda (1-f_{n_j})\dx \xi_0$ vanishes $\mathbb P^\theta$-a.s.\, for sufficiently large $j$, since $1-f_{n_j}$ vanishes on ${\rm supp\,}\xi_0\subset\Lambda_{n_j}$. The other terms at the right are deterministic or depend only on $\xi_1$.
				We can conclude that
				\begin{align*}1&=\mathbb P^\theta
					\left(\xi_0(\Lambda)=\lim_{j\to\infty}
					\left(\mathbb E\int_\Lambda f_{n_j}\dx \xi-\int_\Lambda f_{n_j}\dx \xi_1\right)\right)
					\\&=\int_{\mathcal N(\Lambda)} 
					\mathbb P^\theta_{|{\mathbf v}}\left(\xi(\Lambda)=\lim_{j\to\infty}
					\left(\mathbb E\int_\Lambda f_{n_j}\dx \xi-\int_\Lambda f_{n_j}\dx \delta_{\mathbf v}\right)
					\right)\dx \mathbb P_1^\theta(\delta_{\mathbf v}),
				\end{align*}
				by Proposition \ref{prop:conditionalinf},
				and it follows that 
				\[\mathbb P^\theta_{|{\mathbf v}}\left(\xi(\Lambda)=\lim_{j\to\infty}
				\left(\mathbb E\int_\Lambda f_{n_j}\dx \xi-\int_\Lambda f_{n_j}\dx \delta_{\mathbf v}\right)=:\ell_{\mathbf v}
				\right)=1\quad\mbox{for $\mathbb P^\theta_1$-a.e.\ $\delta_{\mathbf v}$.} 
				\]
By Proposition \ref{prop:conditionalinf} (1), the map $\delta_{\mathbf v}\mapsto \mathbb P^\theta_{|\mathbf v}(\xi(\Lambda)=\ell)$ is $\mathcal C(\Lambda)$-measurable for any $\ell\in\mathbb N\cup\{0,\infty\}$, hence the preimage of $1$ under this map is in $\mathcal C(\Lambda)$.
But {by definition, this is the same as the preimage of $\ell$} under the map $\delta_{\mathbf v}\mapsto \ell_{\mathbf v}$, hence 
$\delta_{\mathbf v}\mapsto \ell_{\mathbf v}$ is $\mathcal C(\Lambda)$-measurable.
				We can conclude that
				$\mathbb P$ is marking rigid, and part (1) of the theorem is proved.
				
				\medskip
				
				For part (2), it follows from \cite[Theorem 4]{Soshnikov} that  
				$\mathbb P^\theta(\xi_0(\Lambda)<\infty)=0$ if $\Tr\,\mathrm M_{1-\theta}\mathrm K=\infty$, while
				$\mathbb P^\theta(\xi_0(\Lambda)<\infty)=1$ if $\Tr\,\mathrm M_{1-\theta}\mathrm K<\infty$.
				We then know that $\mathbb P$ is marking rigid from part (1), and it follows that, for any measurable $\theta:\Lambda\to[0,1]$ such that $\Tr\,\mathrm M_{1-\theta}\mathrm K<\infty$ and for $\mathbb P^\theta_1$-a.e.\ $\delta_{\mathbf v}$, there exists a finite number $\ell_{\mathbf v}$ such that $\mathbb P^\theta_{|\mathbf v}(\xi(\Lambda)=\ell_{\mathbf v})=1$. 
				By Proposition \ref{prop:conditionalinf} (2), $\mathbb P^\theta_{|\mathbf v}$ is a DPP induced by a Hermitian locally trace class operator, hence
				again by \cite[Theorem 4]{Soshnikov}, it is induced by an orthogonal projection $\mathrm K^\theta_{|\mathbf v}$ of rank $\ell_{\mathbf v}$.
}			\end{proof}
			\begin{remark}
		{		The above proof gives us more information about  $\xi_0(\Lambda)$. Depending on $\theta$, this number is either a.s.\ infinite, or a.s.\ equal to 
				\[\ell_{\mathbf v}=\lim_{j\to\infty}
%				\left(\mathbb E\int_\Lambda f_{n_j}(x)\dx \xi(x)-\int f_{n_j}\dx \delta_{\mathbf v}\right).
				{\mathbb E\int_\Lambda f_{n_j}\dx(\xi-\delta_\mathbf v).}\]
				It is surprising that this number does not depend explicitly on the marking function $\theta$. Of course, the {configurations $\delta_{\mathbf v}$ for which it holds} implicitly encode information about $\theta$.
		}	\end{remark}

			\medskip\ \medskip
			\section{OPEs on the real line or on the unit circle}\label{section:4}
				\subsection{OPEs on the real line}
			Let us consider the $N$-point OPE on the real line defined by \eqref{jpdf}.
			It is well-known that \eqref{jpdf} is a DPP on $(\mathbb R, w(x)\dx x)$, with kernel
			\begin{equation}\label{eq:CDkernelR}K_N(x,y)=\sum_{j=0}^{N-1}p_j(x)p_j(y),\end{equation}     where $p_j$ is the normalized orthogonal polynomial of degree $j$ with positive leading coefficient on the real line with respect to the weight $w(x)$.
			From the orthogonality of the polynomials, it follows that the integral operator $\mathrm K_N$ with kernel $K_N$ acting on $L^2(\mathbb R,
			w(x)\mathrm dx)$, defined by
			\begin{equation}
				\mathrm K_N f (y)=\int_{\mathbb R}K_N(x,y)f(y)w(y)\mathrm dy,
			\end{equation}
			is the orthogonal projection
			onto the $N$-dimensional space 
			\[H_N:=\{p: p \mbox{ is a polynomial of degree }\leq N-1\}.\]
			Alternatively, by the Christoffel-Darboux formula, we can write the correlation kernel in $2$-integrable form
			\begin{equation}\label{eq:CD}K_N(x,y)=\gamma_N\frac{p_N(x)p_{N-1}(y)-p_N(y)p_{N-1}(x)}{x-y},\end{equation}
			with $\gamma_N=\frac{\kappa_{N-1}}{\kappa_N}$, where $\kappa_n$ is the leading coefficient of $p_n$, or equivalently $\kappa_n^{-1}=\int_\mathbb Rp_n(x)x^nw(x)\mathrm dx$.
			See e.g.\ \cite{Deift} for more background and details about these ensembles.

			\subsection{OPEs on the unit circle}
			For general integrable weight functions $w$, \eqref{jpdfUC} is a DPP on the unit circle $\{z=e^{it}\}$ with respect to $w(e^{it})\dx t$, with correlation kernel 
			\begin{equation}\label{eq:CDkernelcircle}K_N(e^{i t},e^{i s})=\sum_{j=0}^{N-1}\varphi_j(e^{i t})\overline{\varphi_j(e^{is})},\end{equation}
where $\varphi_j$ is the normalized orthogonal polynomial of degree $j$ with positive leading coefficient on the unit circle with respect to the weight $w(e^{it})$.
			The associated integral operator $\mathrm K_N$ is the orthogonal projection onto the space 
			\[H_N:=\{\varphi: \varphi \mbox{ is a polynomial of degree }\leq N-1\}\]
			Alternatively, by the Christoffel-Darboux formula for orthogonal polynomials on the unit circle, we have the $2$-integrable form
			\[K_N(e^{it},e^{is})=\frac{e^{iN(t-s)}\varphi_N(e^{is})\overline{\varphi_N(e^{it})}-\varphi_N(e^{it})\overline{\varphi_N(e^{is})}}{1-e^{i(t-s)}}.\]
			For the uniform weight $w=1$, we have 
			\(\varphi_j(z)=(2\pi)^{-\frac 12}z^j\) and thus, after conjugation of the operator \(\mathrm K_N\), the kernel can be taken to be
			\[K_N(e^{it},e^{is})=\frac{1}{2\pi}\frac{\sin \frac{N(t-s)}{2}}{\sin\frac{t-s}{2}}.\]%e^{i\frac{N-1}2(\theta-\eta)}
			In the scaling limit  where $t-s=\frac{2\pi (u-v)}{N}$ and $N\to\infty$, \(\frac {2\pi}NK_N(e^{it},e^{is})\) converges to the sine kernel
			\begin{equation}\label{sine kernel}K^{\sin}(u,v)=\frac{\sin\pi(u-v)}{\pi(u-v)},\end{equation} uniformly for $u,v$ in compact subsets of the real line.
			See e.g.\ \cite{Forrester, Mehta} for details.

			\subsection{Conditional ensembles associated to OPEs}
		From Proposition \ref{prop:proj}, it follows that the conditional ensemble $\mathbb P^\theta_{|\mathbf v}$ is the DPP, on $(\mathbb R, (1-\theta(x))w(x)\dx x)$ or on the unit circle with measure $(1-\theta(e^{it}))w(e^{it})\dx t$, with kernel $\widetilde K_N$ of the orthogonal projection onto the space
			\[H_{N,\mathbf v}:=\{p : \mbox{$p$ is a polynomial of degree $\leq N-1$, and $p(v)=0\ \forall v\in\mathbf v$}\}.\]

Let us now define \[w^\theta_{|\mathbf v}(x):=(1-\theta(x))w(x)\prod_{v\in\mathbf v}|x-v|^2\]
			for the real line, and 
			\[w^\theta_{|\mathbf v}(e^{it}):=(1-\theta(e^{it}))w(e^{it})\prod_{v\in\mathbf v}|e^{it}-v|^2\]
			for the unit circle. In the case of the real line, we then have $\widetilde K_N(x,y)=\prod_{v\in\mathbf v}(x-v)(y-v)K_n(x,y)$, with $n:=N-\#\mathbf v$ and with
			$K_n$ the Christoffel-Darboux kernel \eqref{eq:CDkernelR} with $N$ replaced by $n$ and $w$ by $w^\theta_{|\mathbf v}$.
	It follows that we can also 
		see $\mathbb P^\theta_{|\mathbf v}$ as a DPP on $(\mathbb R, w^{\theta}_{|\mathbf v}(x)\dx x)$ with kernel $K_n$,
which implies that it is the $n$-point OPE
\[
			\frac{1}{Z_n}\Delta(\mathbf x)^2\ \prod_{j=1}^n w^\theta_{|\mathbf v}(x_j)\mathrm dx_j,\qquad \Delta(\mathbf x)=\prod_{1\leq i<j\leq N}(x_j-x_i).
		\] 
In the case of the unit circle, we obtain similarly that $\mathbb P^\theta_{|\mathbf v}$ is the $n$-point OPE 
\[
			\frac{1}{Z_n}|\Delta(\mathbf {e^{it}})|^2\ \prod_{j=1}^n w^\theta_{|\mathbf v}(e^{it_j})\mathrm dt_j,\qquad \Delta(\mathbf{e^{it}})=\prod_{1\leq l<k\leq N}(e^{it_k}-e^{it_j}),\qquad t_j\in[0,2\pi).
		\] 
Summarizing the above, we have proved the following result.
\begin{proposition}	\label{prop:OPE}	
		If $\mathbb P$ is the $N$-point OPE with weight $w$ on the real line or the unit circle and $n:=N-\#\mathbf v>0$, then $\mathbb P^\theta_{|\mathbf v}$ is the $n$-point OPE with weight $w^\theta_{|\mathbf v}$ on the real line or the unit circle.
\end{proposition}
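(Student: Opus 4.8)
The plan is to combine Proposition \ref{prop:proj} with elementary linear algebra of polynomial spaces. Applying Proposition \ref{prop:proj} with $\mathrm K=\mathrm K_N$ the orthogonal projection onto $H_N$ (which is Hermitian of finite rank, so the trace-class hypotheses are vacuous), one gets that for $\pi^\theta_{1,m}$-a.e.\ $\mathbf v\in\Lambda^m$ — where $m$ is such that $\mathbb P^\theta(\xi_1(\Lambda)=m)>0$ and $n=N-m>0$ — the conditional ensemble $\mathbb P^\theta_{|\mathbf v}$ is the DPP on $L^2(\Lambda,\mu^\theta_0)$ induced by the orthogonal projection $\mathrm K^\theta_{|\mathbf v}$ onto $H_{N,\mathbf v}=\{p:\deg p\leq N-1,\ p|_{\mathbf v}=0\}$, where $d\mu^\theta_0=(1-\theta)w\,dx$ on $\mathbb R$ (resp.\ $(1-\theta)w\,dt$ on the circle). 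I would first record that for $\pi^\theta_{1,m}$-a.e.\ $\mathbf v$ the points of $\mathbf v$ are distinct and the evaluation functionals $p\mapsto p(v_j)$, $j=1,\dots,m$, are linearly independent on $H_N$: by the computation in the proof of Theorem \ref{theorem:condnonempty}\,(3) the density of $\pi^\theta_{1,m}$ with respect to $\prod_j d\mu^\theta_1(v_j)$ is proportional to $\det K_N(\mathbf v,\mathbf v)\,L_{\mathbf v}[\theta]$, and since $K_N$ is the reproducing kernel of $H_N$ the factor $\det K_N(\mathbf v,\mathbf v)$ is exactly the Gram determinant of these functionals; thus it is positive a.e., and $\dim H_{N,\mathbf v}=n$.

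Next I would build the isometry turning $H_{N,\mathbf v}$ into a space of polynomials of degree $\leq n-1$. With $g(x)=\prod_{v\in\mathbf v}(x-v)$, every $p\in H_{N,\mathbf v}$ factors uniquely as $p=gq$ with $\deg q\leq n-1$, and for such $q_1,q_2$,
\[\langle gq_1,gq_2\rangle_{L^2(\mu^\theta_0)}=\int q_1\overline{q_2}\,|g|^2(1-\theta)w=\langle q_1,q_2\rangle_{L^2(w^\theta_{|\mathbf v})},\]
using $|g|^2(1-\theta)w=w^\theta_{|\mathbf v}$ (and likewise on the circle). Hence, if $\psi_0,\dots,\psi_{n-1}$ are the orthonormal polynomials for the weight $w^\theta_{|\mathbf v}$, then $\{g\psi_j\}_{j=0}^{n-1}$ is an orthonormal basis of $H_{N,\mathbf v}$ in $L^2(\mu^\theta_0)$, so the reproducing kernel of $H_{N,\mathbf v}$ is $\widetilde K_N(x,y)=g(x)\overline{g(y)}K_n(x,y)$, where $K_n$ is the Christoffel--Darboux kernel \eqref{eq:CDkernelR} (resp.\ \eqref{eq:CDkernelcircle}) for the weight $w^\theta_{|\mathbf v}$ with $n$ in place of $N$ (on $\mathbb R$ the conjugations are trivial).

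Finally I would match correlation functions. The $k$-point correlation function of $\mathbb P^\theta_{|\mathbf v}$ with respect to $\mu^\theta_0$ is $\det\bigl(\widetilde K_N(x_i,x_j)\bigr)_{i,j=1}^k=\prod_{i=1}^k|g(x_i)|^2\det\bigl(K_n(x_i,x_j)\bigr)_{i,j=1}^k$, so the associated factorial moment measure equals $\det\bigl(K_n(x_i,x_j)\bigr)_{i,j=1}^k\prod_i w^\theta_{|\mathbf v}(x_i)\,dx_i$ (resp.\ $dt_i$), which is exactly the $k$-point correlation measure of the DPP with kernel $K_n$ on $(\mathbb R,w^\theta_{|\mathbf v}(x)\,dx)$ (resp.\ on the circle), i.e.\ of the $n$-point OPE with weight $w^\theta_{|\mathbf v}$, cf.\ \eqref{jpdf} and \eqref{jpdfUC}. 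Since the correlation functions determine a point process under Assumptions \ref{regularity assumptions}, the two coincide, which is the claim. The only slightly delicate point I expect is the a.e.\ statement of the first paragraph: besides $\dim H_{N,\mathbf v}=n$ one must also check that for $\pi^\theta_{1,m}$-a.e.\ $\mathbf v$ the function $w^\theta_{|\mathbf v}$ is a genuine weight — nonnegative and integrable with all moments finite (immediate, as $w$ decays fast and $0\leq 1-\theta\leq 1$) and of positive total mass (which holds because $\mathbb P^\theta(\xi_1(\Lambda)=m)>0$ with $m<N$ forces $\theta\neq 1$ on a set of positive $w$-measure); everything else is routine.
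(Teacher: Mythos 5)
Your proposal is correct and follows essentially the same route as the paper: apply Proposition \ref{prop:proj} to identify $\mathbb P^\theta_{|\mathbf v}$ as the projection DPP onto $H_{N,\mathbf v}$ in $L^2(\mu_0^\theta)$, factor out $\prod_{v\in\mathbf v}(x-v)$ to get $\widetilde K_N(x,y)=g(x)\overline{g(y)}K_n(x,y)$ with $K_n$ the Christoffel--Darboux kernel for $w^\theta_{|\mathbf v}$, and re-read the result as a DPP with kernel $K_n$ on $(\Lambda, w^\theta_{|\mathbf v})$, i.e.\ the $n$-point OPE. You simply spell out details the paper leaves implicit (the isometry $p=gq$, the a.e.\ nondegeneracy of $\det K_N(\mathbf v,\mathbf v)$, and the correlation-function matching under the change of reference measure).
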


			\subsection{Unitary invariant ensembles and scaling limits}
			The above form of the conditional ensembles has the remarkable consequence that any unitary invariant ensemble \eqref{jpdf} with $V(x)\geq x^2$, can be constructed theoretically from the GUE: to see this, consider the conditional ensemble  $\mathbb P^\theta_{|\emptyset}$ with $w(x)=e^{-Nx^2}$ the Gaussian weight in \eqref{jpdf}, and with
			$\theta(x)=1-e^{-N(V(x)-x^2)}\in[0,1]$. The latter has the joint probability distribution \eqref{jpdf}, but now with weight
			\[w^\theta_{|\emptyset}(x):=(1-\theta(x))e^{-Nx^2}=e^{-NV(x)}.\]
This is of course not of any practical use for $N$ large, because the event  on which we condition then has very small probability unless $V(x)$ is close to $x^2$ (note that there exist algorithms to generate DPPs in general, see e.g.\ \cite{HoughKrishnapurPeresVirag}).
			Nevertheless, it is striking that the GUE encodes any of the above unitary invariant ensembles via marking and conditioning.
			
			\medskip
			
			This becomes even more surprising if we look at
			scaling limits of the correlation kernels. It is a classical fact that the GUE converges to the sine point process in the bulk scaling limit and to the Airy point process in the edge scaling. It is also understood that conditioning on an eigenvalue and scaling around this eigenvalue leads to the bulk Bessel point process, and that conditioning on a gap leads to the hard edge Bessel kernel.
			But unitary invariant ensembles admit for special choices of $V$ also more complicated limit processes, associated to Painlev\'e equations and hierarchies \cite{Duits}.
			In fact, it follows from the above that these Painlev\'e point processes are already encoded in the GUE eigenvalue distribution, if one combines a suitable conditioning with taking scaling limits.
			
			\subsection{Marginal distribution of mark $0$ points with known number of mark $1$ points.}

			The construction of the conditional ensembles in Section \ref{section:2} passed through the marked point process conditioned on having $m$ mark $1$ particles. In this case, we can make these ensembles more explicit.
			Indeed, from \eqref{jpdf}, we obtain that the marginal distribution  of the mark $0$ particles, conditioned on having exactly $m$ mark $1$ particles, is  given by 
			\[
				\frac{1}{Z_{N,m}}|\Delta(\mathbf u)|^2\ \left(\int_{\mathbb R^m}|\Delta(\mathbf v)|^2
				\ \prod_{k=1}^m\left(\prod_{\ell=1}^n|u_\ell-v_k|^2\right)\theta(v_k)w(v_k)\dx v_k\right)\ \prod_{j=1}^n(1-\theta(u_j))w(u_j)\dx u_j,  
			\] 
			where 
			\[
				Z_{N,m}=\int_{\mathbb R^n}|\Delta(\mathbf u)|^2\ \left(\int_{\mathbb R^m}|\Delta(\mathbf v)|^2
				\ \prod_{k=1}^m\left(\prod_{\ell=1}^n|u_\ell-v_k|^2\right)\theta(v_k)w(v_k)\dx v_k\right)\ \prod_{j=1}^n(1-\theta(u_j))w(u_j)\dx u_j.  
			\] 
			By Heine's formula, the $v$-integral can be written as a Hankel determinant in the case of the real line, and as a Toeplitz determinant in the case of the unit circle.
			For the real line, 
			defining the Hankel determinant as
			\[H_{m}(f)=\det\left(f_{j+k}\right)_{j,k=0}^{m-1},\qquad f_\ell=\int_{\mathbb R} x^\ell f(x)\dx x,\] we have 
			\begin{equation}
				\frac{1}{Z_{N,m}'}|\Delta(\mathbf u)|^2\ H_{m}\left(\theta.w.\prod_{j=1}^n(.-u_j)^2\right)\ \prod_{j=1}^n(1-\theta(u_j))w(u_j)\dx u_j,
			\end{equation} 
			with $Z_{N,m}'={\int_{\mathbb R^n}|\Delta(\mathbf u)|^2\ H_{m}\left(\theta.w.\prod_{j=1}^n(.-u_j)^2\right)\ \prod_{j=1}^n(1-\theta(u_j))w(u_j)\dx u_j}$.
			For the unit circle, 
			defining the Toeplitz determinant as
			\[T_{m}(g)=\det\left(g_{j-k}\right)_{j,k=0}^{m-1},\qquad g_\ell=\frac{1 }{2\pi}\int_{0}^{2\pi} e^{-i\ell t} g(e^{i t})\dx t,\]
			we obtain 
			\begin{equation}
				\frac{1}{Z_{N,m}'}|\Delta(\mathbf{e^{it}})|^2\ T_{m}\left(\theta.w.\prod_{j=1}^n|.-e^{it_j}|^2\right)\ \prod_{j=1}^n(1-\theta(e^{it_j}))w(e^{it_j})\dx t_j,
			\end{equation} 
			with {$Z_{N,m}'={\int_{(0,2\pi)^n}|\Delta(\mathbf{e^{it}})|^2\ T_{m}\left(\theta.w.\prod_{j=1}^n|.-e^{it_j}|^2\right)\ \prod_{j=1}^n(1-\theta(e^{it_j}))w(e^{it_j})\dx t_j}$}.
			Similar formulas hold for the marginal distributions of the mark $1$ points. {Alternatively, by \cite[Theorem 3.2]{BaikDeiftStrahov}, we can write both densities, with either \(x_j=u_j,\ \dx\mu(x_j)=\dx x_j\) or \(x_j=e^{it_j},\ \dx\mu(x_j)=\dx t_j\), as
			\begin{equation*}
				\frac 1{Z^{''}_{N,m}}\det\begin{pmatrix}
					K_{N}^{\theta}(x_l,x_k)
				\end{pmatrix}_{l,k=1}^n\prod_{j=1}^n(1-\theta(x_j))w(x_j)\dx\mu(x_j),
		\end{equation*}		
		with a new normalization constant \(Z_{N,m}^{''}\) obtained in a similar manner, and where \(K_{N}^{\theta}(x,y)\) is the kernel inducing the point processes \eqref{jpdf}/\eqref{jpdfUC} with \(N=n+m\) particles and weight function \(\theta w\).} There is no reason to believe that these marginal distributions are in general DPPs, but they do have a special Hankel or Toeplitz determinant structure. In particular, probabilities can be expressed in terms of integrals of Toeplitz or Hankel determinants, which can in some cases be computed asymptotically as $m\to\infty$.
			Similar integrals of Toeplitz and Hankel determinants appear in the study of moments of moments in random matrix ensembles, connected to the study of extreme values of characteristic polynomials, see e.g.\ \cite{BaileyKeating, FyodorovKeating}.

			\medskip\ \medskip 
			\section{Integrable DPPs}\label{section:5}
			In this section, we will consider DPPs $\mathbb P$ on curves $\Lambda$ in the complex plane with $k$-integrable kernels of the form \eqref{k-intkernel}. For simplicity, let us assume that $\Lambda$ is a smooth closed curve on $\mathbb C\cup\{\infty\}$ without self-intersections, that the functions $f_1,\ldots, f_k$ and $g_1,\ldots, g_k$ are smooth functions on $\Lambda$, and that the reference measure is smooth with respect to $\dx z$, i.e.\ $\dx\mu(z)=h(z)\dx z$ with $h$ smooth (say $C^\infty$, even if one can proceed with less regularity if needed) on $\Lambda$. Even if $\dx z$ is not a positive measure on $\Lambda$, by mapping $K(x,y)$ to {$K(x,y)h(y)$}, we can then work with a kernel
			\[K(x,y)=\frac{\mathbf f(x)^T\mathbf g(y)}{x-y}=\frac{\mathbf g(y)^T\mathbf f(x)}{x-y},\] 
			with column vectors \(\mathbf f=(f_j)_{j=1}^k, \mathbf g=(g_j)_{j=1}^k\),
			with respect to the complex measure $\dx z$,
and with the associated integral operator $\mathrm K$ acting on $L^2(\Lambda,\dx z)$.	

\medskip
\subsection{General integrable kernels}
Let us first show that the Palm kernels $K_{\mathbf v}$ are also of $k$-integrable form.\begin{proposition}	\label{prop:Palm}		
For any ${\mathbf v}=\{v_1,\ldots, v_m\}$ such that $\det K(\mathbf v, \mathbf v)>0$, the kernel of the reduced Palm measure $\mathbb P_{\mathbf v}$	 is of $k$-integrable form		
			$K_{\mathbf v}(x,y)=\frac{{\mathbf f}_{\mathbf v}(x)^T\mathbf g_{\mathbf v}(y)}{x-y}$ , and the $j$-th entries of ${\mathbf f}_{\mathbf v}$ and ${\mathbf g}_{\mathbf v}$ are given by
\[f_{{\mathbf v},j}(x)=\frac{1}{\det K(\mathbf v,\mathbf v)}\det\begin{pmatrix}f_j(x)&K(x,\mathbf v)\\
f_j(\mathbf v)&K(\mathbf v, \mathbf v)\end{pmatrix},\quad 
g_{{\mathbf v},j}(y)=\frac{1}{\det K(\mathbf v,\mathbf v)}\det\begin{pmatrix}g_j(y)&g_j(\mathbf v)\\
K(\mathbf v,y)&K(\mathbf v, \mathbf v)\end{pmatrix},
\]		
where
$K(\mathbf v, \mathbf v)$ represents the $m\times m$ matrix with $(i,j)$-entry $K(v_i,v_j)$, $K(x,\mathbf v), g_j(\mathbf v)$ represent $m$-dimensional row vectors with entries $K(x,v_\ell), g_j(v_\ell)$, and $K(\mathbf v,y), f_j(\mathbf v)$ represent $m$-dimensional column vectors with entries $K(v_i,y), f_j(v_i)$.						
\end{proposition}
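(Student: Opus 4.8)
The plan is to reduce everything to a direct verification of the identity $\mathbf f_{\mathbf v}(x)^T\mathbf g_{\mathbf v}(y)=(x-y)K_{\mathbf v}(x,y)$. By \eqref{def:Kv}--\eqref{eq:Kv} (with the hypothesis $\det K(\mathbf v,\mathbf v)>0$ ensuring that $A:=K(\mathbf v,\mathbf v)^{-1}$ exists), the reduced Palm measure $\mathbb P_{\mathbf v}$ is the DPP with kernel $K_{\mathbf v}(x,y)=K(x,y)-K(x,\mathbf v)\,A\,K(\mathbf v,y)$. Applying the block-determinant formula \eqref{eq:block}, now with a $1\times1$ top-left block, to the two $(m+1)\times(m+1)$ determinants in the statement yields the compact expressions $f_{\mathbf v,j}(x)=f_j(x)-K(x,\mathbf v)\,A\,f_j(\mathbf v)$ and $g_{\mathbf v,j}(y)=g_j(y)-g_j(\mathbf v)\,A\,K(\mathbf v,y)$, where $f_j(\mathbf v)=(f_j(v_i))_{i}$ is the column vector of evaluations and $g_j(\mathbf v)=(g_j(v_\ell))_\ell$ the corresponding row vector.

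The core of the argument is then bookkeeping. Multiplying out $\sum_{j=1}^k f_{\mathbf v,j}(x)g_{\mathbf v,j}(y)$ produces four sums, and each is computed from the $k$-integrable form \eqref{k-intkernel} through the single elementary relation $\sum_{j=1}^k f_j(a)g_j(b)=(a-b)K(a,b)$, evaluated at $(a,b)=(x,y),(x,v_\ell),(v_i,y),(v_i,v_\ell)$. Writing $D_p=\mathrm{diag}(x-v_\ell)_\ell$, $D_q=\mathrm{diag}(v_i-y)_i$ and $B:=K(\mathbf v,\mathbf v)$, the four sums equal, respectively, $(x-y)K(x,y)$, $-K(x,\mathbf v)D_p A K(\mathbf v,y)$, $-K(x,\mathbf v)A D_q K(\mathbf v,y)$, and $K(x,\mathbf v)A\bigl(D_qB-BD_q\bigr)AK(\mathbf v,y)$, where I have used the telescoping $v_i-v_\ell=(v_i-y)-(v_\ell-y)$ to rewrite the matrix $\bigl((v_i-v_\ell)K(v_i,v_\ell)\bigr)_{i,\ell}$ as $D_qB-BD_q$. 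Since $AB=BA=\mathrm{Id}$, the last sum collapses to $K(x,\mathbf v)AD_qK(\mathbf v,y)-K(x,\mathbf v)D_qAK(\mathbf v,y)$, which cancels the third sum and leaves the total equal to $(x-y)K(x,y)-K(x,\mathbf v)(D_p+D_q)AK(\mathbf v,y)$. As $D_p+D_q=(x-y)\,\mathrm{Id}$, this is exactly $(x-y)\bigl(K(x,y)-K(x,\mathbf v)AK(\mathbf v,y)\bigr)=(x-y)K_{\mathbf v}(x,y)$ by \eqref{eq:Kv}, so $K_{\mathbf v}$ has $k$-integrable form with the stated $\mathbf f_{\mathbf v},\mathbf g_{\mathbf v}$.

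Finally, to confirm that $K_{\mathbf v}$ is genuinely $k$-integrable one checks the constraint $\mathbf f_{\mathbf v}(x)^T\mathbf g_{\mathbf v}(x)=0$. This drops out of the identity $\mathbf f_{\mathbf v}(x)^T\mathbf g_{\mathbf v}(y)=\mathbf f(x)^T\mathbf g(y)-(x-y)K(x,\mathbf v)AK(\mathbf v,y)$ obtained above: setting $y=x$ kills the second term (it carries an explicit factor $x-y$, and $K(x,\mathbf v)AK(\mathbf v,y)$ stays bounded near $y=x$ for fixed $x$), while $\mathbf f(x)^T\mathbf g(x)=0$ is the defining constraint of the original integrable kernel. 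I expect the only mildly delicate step to be the middle paragraph: keeping straight which quantities are scalars, row or column vectors, or $m\times m$ matrices, and choosing one of the two possible telescopings of $v_i-v_\ell$ so that the cross terms cancel cleanly rather than proliferate. Nothing beyond linear algebra and the Palm formula \eqref{eq:Kv} enters, and the positivity assumption $\det K(\mathbf v,\mathbf v)>0$ is used only to guarantee invertibility of $K(\mathbf v,\mathbf v)$.
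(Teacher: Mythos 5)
Your proof is correct, and it takes a genuinely different route from the paper's. You verify the factorization $(x-y)K_{\mathbf v}(x,y)=\mathbf f_{\mathbf v}(x)^T\mathbf g_{\mathbf v}(y)$ directly for general $m$: after using the block formula \eqref{eq:block} to rewrite $f_{\mathbf v,j}(x)=f_j(x)-K(x,\mathbf v)K(\mathbf v,\mathbf v)^{-1}f_j(\mathbf v)$ and its analogue for $g_{\mathbf v,j}$, you expand the sum over $j$ into four blocks, convert each via $\sum_j f_j(a)g_j(b)=(a-b)K(a,b)$ (which at coinciding arguments is exactly the constraint in \eqref{k-intkernel}), and exploit the commutator identity $\bigl((v_i-v_\ell)K(v_i,v_\ell)\bigr)_{i,\ell}=D_qK(\mathbf v,\mathbf v)-K(\mathbf v,\mathbf v)D_q$ together with $D_p+D_q=(x-y)\,\mathrm{Id}$ to collapse everything to $(x-y)\bigl(K(x,y)-K(x,\mathbf v)K(\mathbf v,\mathbf v)^{-1}K(\mathbf v,y)\bigr)$, which is $(x-y)K_{\mathbf v}(x,y)$ by \eqref{eq:Kv}; the constraint $\mathbf f_{\mathbf v}^T\mathbf g_{\mathbf v}=0$ then follows at once from the explicit factor $(x-y)$. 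The paper instead argues by induction on the number of points: a bordered-determinant (Schur complement) manipulation shows $\mathbf f_{\mathbf v}=(\mathbf f_{\mathbf v'})_v$ for $\mathbf v=\mathbf v'\sqcup\{v\}$, which combined with the consistency $\mathrm K_{\mathbf v}=(\mathrm K_{\mathbf v'})_v$ reduces everything to the single-point case $\mathbf v=\{v\}$, where the factorization and the vanishing of $\mathbf f_v^T\mathbf g_v$ are checked by a short computation using $\mathbf f^T\mathbf g=0$. Your approach buys a self-contained, one-shot verification with no induction, at the price of heavier matrix bookkeeping; the paper's approach buys the structurally useful consistency relation $\mathbf f_{\mathbf v}=(\mathbf f_{\mathbf v'})_v$, which is precisely what underlies the iterative construction of the rational factor $R(z)$ in \eqref{def:R} and the residue computation in the RH problem for $U$ later in Section \ref{section:5}. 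One cosmetic remark: in your last step no boundedness or limiting argument is needed, since the identity $\mathbf f_{\mathbf v}(x)^T\mathbf g_{\mathbf v}(y)=\mathbf f(x)^T\mathbf g(y)-(x-y)K(x,\mathbf v)K(\mathbf v,\mathbf v)^{-1}K(\mathbf v,y)$ holds pointwise and the second term vanishes identically at $y=x$ because of the explicit scalar factor.
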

\begin{proof}
Using the block determinant formula \eqref{eq:block}, we have that ${f}_{\mathbf v,j}$ as defined in the statement of the proposition is given by
\[{f}_{\mathbf v,j}(x)=f_j(x)-K(x,\mathbf v)K(\mathbf v, \mathbf v)^{-1} f_j(\mathbf v).\]
Now let \(\mathbf v=\mathbf v'\sqcup\{v\}\) and assume without loss of generality that \(\det K(\mathbf v',\mathbf v')>0\), then using again the block determinant formula \eqref{eq:block} one obtains 
\begin{equation*}
	\begin{aligned}		
		f_{\mathbf v,j}(x)&=\frac 1{\det K(\mathbf v,\mathbf v)}\det\begin{pmatrix}
			
			f_j(x) & K(x,v) & K(x,\mathbf v') \\ f_j(v) & K(v,v) &  K(v,\mathbf v')\\ f_j(\mathbf v') & K(\mathbf v',v) & K(\mathbf v',\mathbf v')
		\end{pmatrix}\\
	&=\frac{\det K(\mathbf v',\mathbf v')}{\det K(\mathbf v,\mathbf v)}\det\left(\begin{pmatrix} f_j(x) & K(x,v) \\ f_j(v) & K(v,v) \end{pmatrix}
	-\begin{pmatrix}
		K(x,\mathbf v')\\ K(v,\mathbf v')
	\end{pmatrix}K(\mathbf v',\mathbf v')^{-1}\begin{pmatrix}f_j(\mathbf v') & K(\mathbf v',v)
\end{pmatrix}
	\right)\\
	&=\frac 1{K_{\mathbf v'}(v,v)}\det \begin{pmatrix}
		f_{\mathbf v',j}(x) & K_{\mathbf v'}(x,v) \\ f_{\mathbf v',j}(v) & K_{\mathbf v'}(v,v)
	\end{pmatrix},
	\end{aligned}
\end{equation*} 
which implies that \(\mathbf f_\mathbf v=(\mathbf f_{\mathbf v'})_v\), and similarly for \(\mathbf g_\mathbf v\). Since also \(\mathrm K_\mathbf v=(\mathrm K_{\mathbf v'})_v\), it now suffices to prove the result for $\mathbf v=\{v\}$. We then easily verify by \eqref{eq:Kv} that
\begin{equation*}
	\begin{aligned}
		(x-y)K_v(x,y)&=\mathbf f(x)^T\mathbf g(y)-((x-v)+(v-y))\frac{K(x,v)K(v,y)}{K(v,v)}\\
		&=\mathbf f(x)^T\mathbf g(y)-\mathbf f(x)^T\mathbf g(v)\frac{K(v,y)}{K(v,v)}-\frac{K(x,v)}{K(v,v)}\mathbf f(v)^T\mathbf g(y)\\
		&=\left(\mathbf f(x)-\frac{K(x,v)}{K(v,v)}\mathbf f(v)\right)^T\left(\mathbf g(y)-\frac{K(v,y)}{K(v,v)}\mathbf g(v)\right),
	\end{aligned}
\end{equation*}
since \(\mathbf f^T\mathbf g=0\). To complete the proof, it remains to check that \(\mathbf f_v^T\mathbf g_v=0\), but this follows from a similar computation: 
\begin{equation*}
	\begin{aligned}
		\mathbf f_v(x)^T\mathbf g_v(x)&=-\mathbf f(x)^T\mathbf g(v)\frac{K(v,x)}{K(v,v)}-\frac{K(x,v)}{K(v,v)}\mathbf f(v)^T\mathbf g(x)=K(x,v)\frac{\mathbf f(v)^T\mathbf g(x)}{K(v,v)}-\frac{K(x,v)}{K(v,v)}\mathbf f(v)^T\mathbf g(x)=0.
	\end{aligned}
\end{equation*}
\end{proof}

\medskip

Next, we will explain how the kernel of the point process $\mathbb P^\theta_{|\mathbf v}$ on $(\Lambda,\mu_0^\theta)$, with kernel of the operator $\mathrm K(1-\mathrm M_\theta\mathrm K)^{-1}$, can be characterized in terms of a RH problem. For this, we rely on the IIKS method developed in \cite{IIKS, DIZ}.

\medskip

In what follows, we assume that the entries of $\sqrt{\theta}\mathbf g$ and $\sqrt{\theta}\mathbf f$ are smooth, bounded and integrable functions on $\Lambda$ which decay as $z\to\infty$, and that their derivatives are also bounded and integrable.

\medskip

			Let us consider the following RH problem.
			\subsubsection*{RH problem for $Y$}
			\begin{itemize}
				\item[(a)]$Y:\mathbb C\setminus\Lambda\to\mathbb C^{k\times k}$ is analytic; we mean by this that every entry of the matrix is analytic in $\mathbb C\setminus\Lambda$.
				\item[(b)] $Y$ has continuous boundary values $Y_\pm$ when $\Lambda$ is approached from the left ($+$) or right ($-$), with respect to the orientation chosen for $\Lambda$, and they are related by
				\begin{equation}
					Y_+(z)=Y_-(z)J_Y(z),\qquad J_Y(z)=I_k-2\pi i \theta(z)\mathbf f_{\mathbf v}(z)\mathbf  g_{\mathbf v}(z)^T,\qquad z\in\Lambda,
				\end{equation}
				where $I_k$ is the $k\times k$ identity matrix.
				\item[(c)]As $z\to\infty$, $Y(z)\to I_k$ uniformly.
			\end{itemize}

\medskip			
		The following is a consequence of results from, e.g., \cite[Section 2]{DIZ}, see also \cite{IIKS} and \cite{BertolaCafasso}.
			\begin{proposition}
Suppose that $\mathrm M_{\sqrt{\theta}+1_B} \mathrm K_{\mathbf v}\mathrm M_{\sqrt{\theta}+1_B}$ is trace class on $L^2(\Lambda,\dx z)$ for any bounded Borel set $B$, and that
			 $\det(1-\mathrm M_{\sqrt{\theta}}\mathrm K_{\mathbf v}\mathrm M_{\sqrt{\theta}})\neq 0$. 
				\begin{enumerate}\item The RH problem for $Y$ is uniquely solvable, and the solution $Y(z)$ is invertible for any $z\in\mathbb C\setminus\Lambda$.
					\item The DPP $\mathbb P^\theta_{|\mathbf v}$ on $(\Lambda,(1-\theta)\dx z)$ is characterized by the $k$-integrable kernel
					\begin{equation}
						K^\theta_{|\mathbf v}(x,y)=\frac{\mathbf f^{\theta}_{|\mathbf v}(x)^T\mathbf g^{\theta}_{|\mathbf v}(y)}{x-y}%\frac{\sum_{j=1}^k{F_j}(x)G_j(y)}{x-y}=,
					\end{equation}
where					
\begin{equation}
						\mathbf f^{\theta}_{|\mathbf v}=Y_\pm{\bf f}_{\mathbf v},\qquad \mathbf g^{\theta}_{|\mathbf v}=Y_\pm^{-T}\mathbf g_{\mathbf v},
					\end{equation}
and the above expressions are independent of the choice $\pm$ of boundary value, with $Y_{\pm}^{-T}$ denoting the inverse transpose of the matrix $Y_\pm$.				
Consequently
					\begin{equation}\label{eq:IIKSkernel}
						K^\theta_{|\mathbf v}(x,y)=\frac{1}{x-y}{\bf g}_{\mathbf v}(y)^TY_\pm(y)^{-1}Y_\pm(x){\bf f}_{\mathbf v}(x).
					\end{equation}
				\end{enumerate}
			\end{proposition}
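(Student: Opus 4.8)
The plan is to invoke the standard IIKS dictionary between integrable Fredholm determinants and Riemann--Hilbert problems, and then to match it with Theorem \ref{theorem:condnonempty}(4). First I would recall that by Proposition \ref{prop:Palm} the Palm kernel $K_{\mathbf v}$ is itself $k$-integrable, $K_{\mathbf v}(x,y)=\frac{\mathbf f_{\mathbf v}(x)^T\mathbf g_{\mathbf v}(y)}{x-y}$ with $\mathbf f_{\mathbf v}^T\mathbf g_{\mathbf v}=0$, and that under the smoothness/decay hypotheses on $\sqrt\theta\,\mathbf f,\sqrt\theta\,\mathbf g$ (which carry over to $\sqrt\theta\,\mathbf f_{\mathbf v},\sqrt\theta\,\mathbf g_{\mathbf v}$ since the finite-rank correction involves only point evaluations and bounded coefficients), the operator $\mathrm M_{\sqrt\theta}\mathrm K_{\mathbf v}\mathrm M_{\sqrt\theta}$ is an integrable operator on $L^2(\Lambda,\dx z)$ in the sense of \cite{IIKS,DIZ}. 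The kernel of $\mathrm M_\theta\mathrm K_{\mathbf v}$, namely $\theta(x)K_{\mathbf v}(x,y)$, is $k$-integrable with functions $(\theta\mathbf f_{\mathbf v},\mathbf g_{\mathbf v})$; its associated IIKS jump matrix on $\Lambda$ is exactly $J_Y(z)=I_k-2\pi i\,\theta(z)\mathbf f_{\mathbf v}(z)\mathbf g_{\mathbf v}(z)^T$, so the RH problem for $Y$ above is precisely the one attached to this integrable operator.

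For part (1), the general IIKS theory (\cite[Section 2]{DIZ}, see also \cite{BertolaCafasso}) gives that the RH problem for $Y$ is solvable if and only if $1-\mathrm M_\theta\mathrm K_{\mathbf v}$ is invertible on $L^2(\Lambda,\dx z)$, equivalently $\det(1-\mathrm M_{\sqrt\theta}\mathrm K_{\mathbf v}\mathrm M_{\sqrt\theta})\neq 0$, which is our hypothesis; uniqueness follows as usual from the normalization $Y(\infty)=I_k$ together with $\det J_Y\equiv 1$ (because $\mathbf f_{\mathbf v}^T\mathbf g_{\mathbf v}=0$, so $J_Y$ is a rank-one unipotent perturbation of $I_k$), via a Liouville argument: $\det Y$ is entire, bounded, and tends to $1$. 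Invertibility of $Y(z)$ for $z\notin\Lambda$ follows from $\det Y\equiv 1$, again by $\det J_Y\equiv 1$ and Liouville.

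For part (2), the IIKS formula expresses the resolvent kernel of $\mathrm M_\theta\mathrm K_{\mathbf v}$ in terms of $Y$: the kernel of $\mathrm K_{\mathbf v}(1-\mathrm M_\theta\mathrm K_{\mathbf v})^{-1}$ is $\frac{\mathbf F(x)^T\mathbf G(y)}{x-y}$ with $\mathbf F=Y_\pm\mathbf f_{\mathbf v}$ and $\mathbf G=Y_\pm^{-T}\mathbf g_{\mathbf v}$, these being the "dressed" functions; one checks as in \cite{DIZ} that $\mathbf F,\mathbf G$ are independent of the choice of boundary value (the discrepancy between $Y_+$ and $Y_-$ is absorbed by $J_Y$ acting on $\mathbf f_{\mathbf v}$, resp.\ $J_Y^{-T}$ on $\mathbf g_{\mathbf v}$), and that $\mathbf F^T\mathbf G=\mathbf f_{\mathbf v}^T Y_\pm^T Y_\pm^{-T}\mathbf g_{\mathbf v}=\mathbf f_{\mathbf v}^T\mathbf g_{\mathbf v}=0$, so the kernel is again $k$-integrable. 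Now $K_{|\mathbf v}^\theta$ was \emph{defined} in the excerpt as the kernel of $\mathrm K_{\mathbf v}(1-\mathrm M_\theta\mathrm K_{\mathbf v})^{-1}$, and by Theorem \ref{theorem:condnonempty}(4) this kernel, as an operator on $L^2(\Lambda,\mu_0^\theta)=L^2(\Lambda,(1-\theta)\dx z)$, induces exactly $\mathbb P^\theta_{|\mathbf v}$; combining gives the stated $K^\theta_{|\mathbf v}(x,y)=\frac{\mathbf f^\theta_{|\mathbf v}(x)^T\mathbf g^\theta_{|\mathbf v}(y)}{x-y}$ with $\mathbf f^\theta_{|\mathbf v}=Y_\pm\mathbf f_{\mathbf v}$, $\mathbf g^\theta_{|\mathbf v}=Y_\pm^{-T}\mathbf g_{\mathbf v}$, and hence \eqref{eq:IIKSkernel} upon writing out the inner product. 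The main obstacle is not conceptual but a matter of bookkeeping: one must verify that the hypotheses on $(\mathbf f,\mathbf g)$ transfer to $(\mathbf f_{\mathbf v},\mathbf g_{\mathbf v})$ so that the IIKS framework literally applies (in particular that $\sqrt\theta\mathbf f_{\mathbf v},\sqrt\theta\mathbf g_{\mathbf v}$ and their derivatives are still bounded, integrable, and decaying), and that the resolvent/$Y$ identities of \cite{DIZ} are quoted with the orientation and $\pm$-conventions matched to ours; these are routine given Proposition \ref{prop:Palm}.
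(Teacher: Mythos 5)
Your outline is the same as the paper's: invoke the IIKS/Deift--Its--Zhou dictionary for the integrable operator built from $\theta$ and the Palm functions $\mathbf f_{\mathbf v},\mathbf g_{\mathbf v}$ of Proposition \ref{prop:Palm}, obtain the dressed functions from the solution $Y$, and then identify the resulting kernel with the one inducing $\mathbb P^\theta_{|\mathbf v}$ via Theorem \ref{theorem:condnonempty}(4). Your Liouville argument for part (1) and the checks that $\mathbf F^T\mathbf G=0$ and that the formulas are independent of the $\pm$ boundary value are fine.

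There is, however, one step you state too glibly. What \cite[Lemma 2.12]{DIZ} actually gives is the resolvent of the integrable operator itself: applied to $\mathrm A=\mathrm M_{\sqrt\theta}\mathrm K_{\mathbf v}\mathrm M_{\sqrt\theta}$ (this is how the paper proceeds), it says that $\mathrm A(1-\mathrm A)^{-1}=\mathrm M_{\sqrt\theta}\mathrm K_{\mathbf v}(1-\mathrm M_\theta\mathrm K_{\mathbf v})^{-1}\mathrm M_{\sqrt\theta}$ has kernel $\frac{\mathbf F(x)^T\mathbf G(y)}{x-y}$ with $\mathbf F=Y_\pm\sqrt\theta\,\mathbf f_{\mathbf v}$, $\mathbf G=Y_\pm^{-T}\sqrt\theta\,\mathbf g_{\mathbf v}$ (similarly, applying it to $\mathrm M_\theta\mathrm K_{\mathbf v}$ leaves a factor $\theta$ attached). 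The formula you quote, for the kernel of $\mathrm K_{\mathbf v}(1-\mathrm M_\theta\mathrm K_{\mathbf v})^{-1}$ itself with undressed $\mathbf f_{\mathbf v},\mathbf g_{\mathbf v}$, is the conclusion you are trying to reach, not a direct citation: to get it you must strip off the $\sqrt\theta$ (or $\theta$) factors, which is legitimate only where $\theta$ does not vanish. The paper handles this explicitly: if $\theta$ has no zeros on $\Lambda$ one simply divides by $\sqrt\theta$, and if $\theta$ does vanish somewhere one cannot just quote the lemma but must rerun its proof to see that the same dressed formula holds for the unsymmetrized operator. Your closing remark about "routine bookkeeping" covers the transfer of regularity from $(\mathbf f,\mathbf g)$ to $(\mathbf f_{\mathbf v},\mathbf g_{\mathbf v})$, but not this point; as written, your argument is complete only under the additional assumption that $\theta>0$ on $\Lambda$.
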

\begin{proof}
Observe first that, because of the assumptions and Proposition \ref{prop:Palm},
$\theta(x) \mathbf f_\mathbf v(x)\mathbf g_\mathbf v(x)^T$ is also smooth, in $L^2(\Lambda{, \dx z})$, and decaying as $x\to\infty$, $x\in\Lambda$.
We then set $\mathrm A=\mathrm M_{\sqrt\theta}\mathrm K_{\mathbf v}\mathrm M_{\sqrt\theta}$ and $V=I_k-2\pi i\theta\mathbf f_{\mathbf v}\mathbf g_{\mathbf v}^T$, and apply \cite[Lemma 2.12]{DIZ}: this result states that
\[(1-\mathrm A)^{-1}-1=\mathrm A(1-\mathrm A)^{-1}=\mathrm M_{\sqrt\theta}\mathrm K_{\mathbf v}\left(1-\mathrm M_\theta\mathrm K_\mathbf v\right)^{-1}\mathrm M_{\sqrt\theta}\]
has kernel
$\frac{{\bf F}(x)^T{\bf G}(y)}{x-y}$
with 
\[{\bf F}=Y_\pm{\sqrt{\theta}\bf f}_\mathbf v,\qquad {\bf G}=Y_\pm^{-T}\sqrt{\theta}\mathbf g_\mathbf v.\]
Hence, if $\theta$ has no zeros on $\Lambda$, the operator 
$\mathrm K_\mathbf v(1-\mathrm M_\theta \mathrm K_\mathbf v)^{-1}$
has kernel 
$\frac{\mathbf f^\theta_{|\mathbf v}(x)^T\mathbf g^\theta_{|\mathbf v}(y)}{x-y}$ on $L^2(\Lambda,\dx z)$
with 
\[\mathbf f^\theta_{|\mathbf v}=\frac 1{\sqrt\theta}\mathbf F,\qquad \mathbf g^\theta_{|\mathbf v}=\frac 1{\sqrt\theta}\mathbf G,\]
and the result follows from Theorem \ref{theorem:condnonempty}.
If $\theta$ has zeros on $\Lambda$, the result does not directly follow, but it is readily seen that one can follow the proof of \cite[Lemma 2.12]{DIZ} to prove the result also in this case. 
\end{proof}		
\begin{remark}
The smoothness and decay of the entries of $\theta\mathbf f\mathbf g^T$ are assumptions we make to avoid technical complications, and which guarantee smooth boundary values $Y_\pm$ and uniform convergence at infinity. One can also proceed with less regularity, but then care must be taken about the sense of the boundary values of $Y$, which are not necessarily continuous, and about the convergence at infinity, which is not necessarily uniform, see e.g.\ \cite{Deift, DIZ} for general theory of RH problems.
\end{remark}	

The above results imply that given $K, \theta, \mathbf v$, we obtain $K^\theta_{|\mathbf v}$ by first computing $K_{\mathbf v}$, and then solving the RH problem for $Y$.
Next, we explain how to bypass this procedure by characterizing $K^\theta_{|\mathbf v}$ directly in terms of a RH problem which depends in a simple explicit way on $K,\theta,\mathbf v$, without need to go through the transformation $K\mapsto K_{\mathbf v}$.
For that purpose, let us construct a rational matrix-valued function $R$, which will allow us to connect $\mathbf f,\mathbf g$ with $\mathbf f_{\mathbf v},\mathbf g_{\mathbf v}$.

For a singleton \(\mathbf v=\{v\}\), we observe that
\begin{equation*}
	\mathbf f_v(x)=\mathbf f(x)-\mathbf f(v)\frac{K(x,v)}{K(v,v)}=\left(I_k-\frac{R_1}{x-v}\right)\mathbf f(x),\qquad R_1=\frac{\mathbf f(v)\mathbf g(v)^T}{K(v,v)},
\end{equation*}
and similarly since \(R_1^2=0\),
\begin{equation*}
	\mathbf g_v(x)^T=\mathbf g(x)^T\left(I_k+\frac{R_1}{x-v}\right)=\mathbf g(x)^T\left(I_k-\frac{R_1}{x-v}\right)^{-1}.
\end{equation*}
For the general case \(\mathbf v=\{v_1,...,v_m\}\), we inductively define the matrices \(R_j\) for \(j=1,...,m\) by
\[R_j=\frac{\mathbf f_{v_1,\ldots, v_{j-1}}(v_j)\mathbf g_{v_1,\ldots, v_{j-1}}(v_j)^T}{K_{v_1,\ldots, v_{j-1}}(v_j,v_j)},\]
satisfying \(R^2_j=0\), and
\begin{equation}
	\label{def:R}
	R(z)=\left(I_k+\frac{R_1}{z-v_1}\right)\left(I_k+\frac{R_2}{z-v_2}\right)\cdots\left(I_k+\frac{R_m}{z-v_m}\right),\qquad z\in\mathbb C\setminus\{v_1,\ldots, v_m\}.
\end{equation}
Then $R$ has determinant identically equal to $1$, and
\[\mathbf f_{\mathbf v}(x)=R(x)^{-1}\mathbf f(x),\qquad \mathbf g_{\mathbf v}(x)^T=\mathbf g(x)^TR(x),\]
so that we can rewrite the jump matrix $J_Y$ as 
	\begin{equation}\label{eq:transformJY}J_Y(x)=I_k-2\pi i \theta(x) \mathbf f_{\mathbf v}(x)\mathbf g_{\mathbf v}(x)^T=R(x)^{-1}	\left(I_k-2\pi i \theta(x) \mathbf f(x)\mathbf g(x)^T\right)
R(x).	
	\end{equation}	
Note that although the construction of \(R\) uses a certain order of \(v_1,...,v_m\), the result only depends on the unordered set \(\mathbf v\), as it can be checked that
\begin{equation*}
	R(z)=I_k+\frac{\mathbf f(\mathbf v)}{z-\mathbf v} K(\mathbf v,\mathbf v)^{-T}\mathbf g(\mathbf v)^T=\left(I_k-\mathbf f(\mathbf v)K(\mathbf v,\mathbf v)^{-T}\left(\frac{\mathbf g(\mathbf v)}{z-\mathbf v}\right)^T \right)^{-1},
\end{equation*}	
where \(\frac{\mathbf f(\mathbf v)}{z-\mathbf v}\) and \(\mathbf g(\mathbf v)\) are the \(k\times m\) matrices whose \(i\)-th columns are \(\frac{\mathbf f(v_i)}{z-v_i}\) and \(\mathbf g(v_i)\), and similarly for the others. It turns out that \(R\) is a rational function which can also be characterized by a discrete RH problem (in fact, $R^{-1}$ is the solution to the below RH problem for $U$ with \(\theta=0\)).
			Let us now define
			\begin{equation}\label{def:T}
			U(z)=Y(z)R(z)^{-1},
			\end{equation}
then $U$ satisfies the following RH problem.

\subsubsection*{RH problem for $U$}
				\begin{enumerate}
					\item Each entry of \(U:\mathbb C\setminus\Lambda\to\mathbb C^{k\times k}\) is analytic.
					\item On \(\Lambda\setminus\mathbf v\), $U$ has continuous boundary values \(U_\pm\) which satisfy the jump condition 
					\begin{equation*}
						U_+=U_-(I_k-2\pi i \theta \mathbf f \mathbf g^T),
					\end{equation*}
					while for each \(v\in\mathbf v\), the residue \(\rho_U(v)=\lim_{z\to v}(z-v)U(z)\) is well-defined and given by
					\begin{equation*}
						\rho_U(v)=-\lim_{z\to v}U(z)\frac{\mathbf f(v)\mathbf g(v)^T}{K(v,v)}.
					\end{equation*}
					\item As \(z\to\infty\), $U(z)\to I_k$ uniformly.

\end{enumerate}

Conditions (1) and (3) are immediately verified. To check the jump relation (2) for $U$, it suffices to use \eqref{def:T} and \eqref{eq:transformJY}. For the residues of \(U\), observe that it is sufficient to verify the condition for \(v=v_1\) by the iterative construction of \(R\). We then have by \eqref{def:R}, \eqref{def:T}, \(R^2_1=0\), and the fact that \(Y_+(v_1)=Y_-(v_1)\) (since \(\mathbf f_\mathbf v(v_1)=\mathbf g_\mathbf v(v_1)=0\)) that 		
\begin{align*}
					\lim_{z\to v_1}(z-v_1)U(z)&=-Y_\pm(v_1){\rm Res}_{z=v_1}\,R^{-1}=-Y_\pm(v_1)
\left(I_k-\frac{R_m}{v_1-v_m}\right)\cdots\left(I_k-\frac{R_2}{v_1-v_2}\right)R_1\\
					&=-\lim_{z\to v_1}Y(z)\left(I_k-\frac{R_m}{z-v_m}\right)\cdots\left(I_k-\frac{R_1}{z-v_1}\right)R_1
	=-\lim_{z\to v}U(z)\frac{\mathbf f(v)\mathbf g(v)^T}{K(v,v)}.
						\end{align*}
In conclusion, we have the following result.

		\begin{proposition}	
Suppose that $\mathrm M_{\sqrt{\theta}+1_B} \mathrm K_{\mathbf v}\mathrm M_{\sqrt{\theta}+1_B}$ is trace class on $L^2(\Lambda,\dx z)$ for any bounded Borel set $B$, and that
			 $\det(1-\mathrm M_{\sqrt{\theta}}\mathrm K_{\mathbf v}\mathrm M_{\sqrt{\theta}})\neq 0$.			There exists a unique solution \(U\) to the RH problem for $U$ which is furthermore invertible and satisfies
				\begin{equation*}
					\mathbf f^\theta_{|\mathbf v}=U_\pm \mathbf f,\quad\quad\quad \mathbf g^\theta_{|\mathbf v}=U^{-T}_\pm \mathbf g,
				\end{equation*}
				and
				\[
K^{\theta}_{|\mathbf v}(x,y)=\frac{1}{x-y}					\mathbf g(y)^TU_\pm(y)^{-1}U_\pm(x)\mathbf f(x).				
				\]
			\end{proposition}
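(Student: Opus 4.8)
The plan is to read the result off from the preceding Proposition by means of the substitution $U = YR^{-1}$, which has already been set up and shown to intertwine the two RH problems.

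\emph{Existence, invertibility, and the formulas.} Under the stated hypotheses the preceding Proposition provides a unique solution $Y$ of the RH problem for $Y$, with $Y(z)$ invertible for every $z\in\mathbb C\setminus\Lambda$ and with $\mathbf f^\theta_{|\mathbf v}=Y_\pm\mathbf f_{\mathbf v}$, $\mathbf g^\theta_{|\mathbf v}=Y_\pm^{-T}\mathbf g_{\mathbf v}$. Define $U:=YR^{-1}$. The computation carried out just before the statement verifies conditions (1)--(3) of the RH problem for $U$, and since $\det R\equiv 1$ we have $\det U=\det Y\neq 0$ on $\mathbb C\setminus\Lambda$, so $U$ is invertible there. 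Using $\mathbf f_{\mathbf v}=R^{-1}\mathbf f$, $\mathbf g_{\mathbf v}=R^{T}\mathbf g$ (the transpose of $\mathbf g_{\mathbf v}^{T}=\mathbf g^{T}R$), and $U^{-T}=Y^{-T}R^{T}$, one gets on $\Lambda\setminus\mathbf v$
\[
\mathbf f^\theta_{|\mathbf v}=Y_\pm R^{-1}\mathbf f=U_\pm\mathbf f,\qquad \mathbf g^\theta_{|\mathbf v}=Y_\pm^{-T}R^{T}\mathbf g=U_\pm^{-T}\mathbf g,
\]
and these identities persist at the points of $\mathbf v$ by continuity (there $R^{-1}\mathbf f=\mathbf f_{\mathbf v}$ and $R^{T}\mathbf g=\mathbf g_{\mathbf v}$ extend continuously and vanish, as do $\mathbf f^\theta_{|\mathbf v}$, $\mathbf g^\theta_{|\mathbf v}$). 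Substituting into $K^\theta_{|\mathbf v}(x,y)=\mathbf f^\theta_{|\mathbf v}(x)^{T}\mathbf g^\theta_{|\mathbf v}(y)/(x-y)$ and using that a scalar equals its transpose yields
\[
K^\theta_{|\mathbf v}(x,y)=\frac{\mathbf f(x)^{T}U_\pm(x)^{T}U_\pm(y)^{-T}\mathbf g(y)}{x-y}=\frac{\mathbf g(y)^{T}U_\pm(y)^{-1}U_\pm(x)\mathbf f(x)}{x-y},
\]
the value being independent of the choice of $\pm$ because $\mathbf f^\theta_{|\mathbf v}$ and $\mathbf g^\theta_{|\mathbf v}$ are, by the preceding Proposition.

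\emph{Uniqueness.} It suffices to show that $U\mapsto UR$ is a bijection from the solution set of the RH problem for $U$ onto that of the RH problem for $Y$; uniqueness of the latter then gives uniqueness of the former, and the first part identifies the unique solution as the invertible matrix $YR^{-1}$. If $U$ solves the RH problem for $U$, set $Y:=UR$. Away from $\Lambda$ and $\mathbf v$ this is analytic; at each $v\in\mathbf v$ the simple pole of $R$ and the simple pole of $U$ cancel, so $Y$ extends analytically across $v$ — this is exactly the reverse of the residue computation already displayed for $U=YR^{-1}$, relying on \eqref{def:R}, \eqref{def:T}, the nilpotency $R_j^2=0$, and the prescribed form of $\rho_U(v)$. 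On $\Lambda\setminus\mathbf v$ the jump relation for $Y$ follows from that of $U$ through \eqref{eq:transformJY}, and $Y(z)\to I_k$ as $z\to\infty$ since $R(z)\to I_k$. Thus $Y$ solves the RH problem for $Y$, and $Y\mapsto YR^{-1}$ (already known to land in the solution set for $U$) is the two-sided inverse of $U\mapsto UR$.

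\emph{Main difficulty.} The only genuinely delicate step is the pole-cancellation in the direction $U\mapsto UR$: one must check that the residue condition imposed on $U$ at the points of $\mathbf v$ is precisely what is needed to make $UR$ analytic there. This is the mirror image of the computation already performed for $U=YR^{-1}$ and depends on $R_j^2=0$ and the explicit residues $R_j$; the remaining steps are routine bookkeeping with the identities $\mathbf f_{\mathbf v}=R^{-1}\mathbf f$, $\mathbf g_{\mathbf v}=R^{T}\mathbf g$, $U=YR^{-1}$, and $\det R\equiv 1$.
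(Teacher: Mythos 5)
Your proof is correct and follows essentially the same route as the paper: the paper's argument is exactly the construction $U=YR^{-1}$ carried out before the statement, with existence, invertibility, the formulas $\mathbf f^\theta_{|\mathbf v}=U_\pm\mathbf f$, $\mathbf g^\theta_{|\mathbf v}=U_\pm^{-T}\mathbf g$, and the kernel identity read off from the Proposition for $Y$ together with $\mathbf f_{\mathbf v}=R^{-1}\mathbf f$, $\mathbf g_{\mathbf v}^T=\mathbf g^T R$ and $\det R\equiv 1$. Your explicit verification that $U\mapsto UR$ maps solutions of the $U$-problem back to solutions of the $Y$-problem (the pole cancellation at the points of $\mathbf v$ forced by the residue condition) is precisely the uniqueness mechanism the paper leaves implicit, so there is no substantive difference in approach.
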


			\subsection{Integrable kernels characterized by a RH problem}
			The above RH characterization of $K^\theta_{|\emptyset}$ and $K^\theta_{|\mathbf v}$ is particularly useful in cases where the kernel $K$ of the DPP $\mathbb P$ itself can also be characterized in terms of a RH problem. In such a case, the IIKS method allows to transform the RH problem to an {\em undressed} RH problem which is in a form amenable to asymptotic analysis and to derive differential equations \cite{BertolaCafasso, BertolaCafasso1, IIKS}.
				
\medskip			
		
		Such a RH characterization is available for many important 2-integrable DPPs, like OPEs and the DPPs characterized by the Airy kernel, the sine kernel, the Bessel kernels, the confluent hypergeometric kernels, and kernels connected to Painlev\'e equations. Multiple orthogonal polynomial ensembles \cite{KuijlaarsMOP} and their scaling limits like Pearcey and tacnode kernels are examples of $k$-integrable kernels with $k>2$, which can also be characterized through a ($k\times k$) RH problem.
		
		\medskip
		
		Let us illustrate this in the case $k=2$. 		
			\medskip
			
			Suppose that we can write 
			\begin{equation}\label{fg Psi}
K(x,y)=\frac{\mathbf f(x)^T\mathbf g(y)}{x-y},\qquad 				\mathbf f(x)=\frac{w(x)}{2\pi i}\Psi_\pm(x)\begin{pmatrix}1\\0\end{pmatrix},\qquad \mathbf g(y)^T=\begin{pmatrix}0&1\end{pmatrix}\Psi_\pm(y)^{-1},
			\end{equation}
			for a smooth bounded function $w:\Lambda\to[0,+\infty)$, where $\Psi$ satisfies a RH problem of the following form.

			\subsubsection*{RH problem for $\Psi$}
			\begin{enumerate}
				\item \(\Psi : \mathbb C\setminus\Lambda\to\mathbb C^{2\times 2}\) is analytic.
				\item $\Psi$ has continuous  boundary values $\Psi_\pm$, and they are related by
				\begin{equation*}
					\Psi_+(z)=\Psi_-(z)\begin{pmatrix}1&w(z)\\0&1\end{pmatrix},\qquad z\in\Lambda,
				\end{equation*}
				for some smooth bounded function $w:\Lambda\to\mathbb C$.
				\item For some \(\Psi_\infty : \mathbb C\setminus\Lambda\to\mathbb C^{2\times 2}\) such that \(\det\Psi_\infty(z)=1\), we have	\begin{equation*}
					\Psi(z)=\left(I_2+O(z^{-1})\right)\Psi_\infty(z),
				\end{equation*}
			uniformly as $z\to\infty$.
			\end{enumerate}

				Then, it is straightforward to show that $\det\Psi(z)\equiv 1$, hence $\Psi(z)$ is an invertible matrix for every $z\in\mathbb C\setminus\Lambda$, and that there is only one solution to the RH problem for $\Psi$.
	
			The third RH condition would be trivially valid with $\Psi_\infty=\Psi$,		
but as we illustrate in examples below, one usually prefers to specify a simpler explicit function $\Psi_\infty$				to describe the asymptotic behavior of $\Psi$, in order to facilitate further analysis of the RH problem.
Observe that the RH conditions imply that the first column of $\Psi$ and the second row of $\Psi^{-1}$ extend to entire functions in the complex plane, and hence that $\mathbf f/w$ and $\mathbf g$ extend to entire functions.

Let us define
\begin{equation}
\Psi^\theta_{|\mathbf v}=U\Psi.\end{equation} Then, $\Psi^\theta_{|\mathbf v}$ is invertible and it is the unique solution to the following RH problem.
			\subsubsection*{RH problem for $\Psi^\theta_{|\mathbf v}$}
			\begin{enumerate}
				\item Each entry of \(\Psi^\theta_{|\mathbf v} : \mathbb C\setminus\Lambda\to\mathbb C^{2\times 2}\) is analytic.
				\item $\Psi^\theta_{|\mathbf v}$ has continuous boundary values $\Psi^\theta_{|\mathbf v\pm}$ on $\Lambda\setminus\mathbf v$ and they are related by			
				\begin{equation*}
					\Psi^\theta_{|\mathbf v+}(z)=\Psi_{|\mathbf v-}^\theta(z)\begin{pmatrix}1& w(z)(1-\theta(z))\\0&1\end{pmatrix},
				\end{equation*}
				while as \(z\to v\in\mathbf v\) we have
				\begin{equation*}
					\Psi^\theta_{|\mathbf v}(z)=\mathcal O(1)(z-v)^{\sigma_3},\qquad \sigma_3:=\begin{pmatrix}1&0\\0&-1\end{pmatrix}.
				\end{equation*}
				\item As $z\to\infty$, we have the uniform asymptotics
				\[\Psi^\theta_{|\mathbf v}(z)=\left(I_2+\mathcal O(z^{-1})\right)\Psi_\infty(z).\]
				
			\end{enumerate}
			
The first and the third conditions are immediate from the corresponding ones for $U$ and $\Psi$. The jump relation is obtained from the jump relation for $U$ and the one for $\Psi$ along with \eqref{fg Psi}:
\begin{equation*}
	U_+\Psi_+=U_-\left(I_2-2\pi i\theta \frac{w}{2\pi i}\Psi_-\begin{pmatrix}1 \\ 0\end{pmatrix}\begin{pmatrix}
	 0 & 1
	\end{pmatrix}\Psi_+^{-1}\right)\Psi_+=U_-\Psi_-\left(\begin{pmatrix}
	1 & w \\ 0 &1
\end{pmatrix}-w\theta\begin{pmatrix}
0 & 1 \\ 0& 0
\end{pmatrix}\right)	.
\end{equation*}
The singular behavior near \(\mathbf v\) is obtained in a similar manner: the second column of \(\Psi^\theta_{|\mathbf v}(z-v)^{-\sigma_3}\) is obviously \(\mathcal O(1)\) since the second column of \(U\) is $\mathcal O((z-v)^{-1})$ as $z\to v\in\mathbf v$, while for the first column we notice that for each \(v\in\mathbf v\), by \eqref{fg Psi},{
\begin{equation*}
	\begin{aligned}
		\lim_{z\notin\Lambda\to v}\Psi^\theta_{|\mathbf v}(z)\begin{pmatrix}
			1 \\0
		\end{pmatrix}&=\lim_{z\in\Lambda\to v}\Psi^\theta_{|\mathbf v\pm}(z)\begin{pmatrix}
			1 \\0
		\end{pmatrix}=Y_\pm(v)\lim_{z\in \Lambda\to v}R(z)^{-1}\Psi_\pm(z)\begin{pmatrix}
			1 \\0
		\end{pmatrix}\\
	&=Y_\pm(v)\frac{2\pi i}{w(v)}\lim_{z\in \Lambda\to v}R(z)^{-1}\frac{w(z)}{2\pi i}\Psi_\pm(z)\begin{pmatrix}
		1 \\0
	\end{pmatrix}=Y_\pm(v)\frac{2\pi i}{w(v)}\mathbf f_\mathbf v(v)=0.
	\end{aligned}
\end{equation*}}			
Moreover, we have by \eqref{eq:IIKSkernel} that the kernel of the conditional ensemble is given by
			\begin{align}
				\nonumber K^\theta_{|\mathbf v}(x,y)&=\frac{1}{x-y}\begin{pmatrix}0&1\end{pmatrix}\Psi_{\pm}(y)^{-1}U_\pm(y)^{-1}
				U_\pm(x)\Psi_\pm(x)\begin{pmatrix}1\\0\end{pmatrix}\frac{w(x)}{2\pi i}\\
				&=\frac{w(x)}{2\pi i(x-y)}\left(\Psi^\theta_{|\mathbf v}(y)^{-1}
				\Psi^\theta_{|\mathbf v}(x)\right)_{21}.
			\end{align}

			Let us illustrate the above procedure in some examples.
			\begin{example}			
	Let $p_k$ be the normalized degree $k$ orthogonal polynomial with respect to a weight function $w$ on $\Lambda=\mathbb R$, with leading coefficient $\kappa_k>0$. Write
\begin{equation}
\Psi(z) := 
\begin{pmatrix}
\frac{1}{\kappa_N}p_N(z) & \frac{1}{2\pi i \kappa_N}\int_{-\infty}^{+\infty}\frac{p_N(s)w(s)ds}{s-z}\\
-2\pi i\kappa_{N-1} p_{N-1}(z) & -\kappa_{N-1}\int_{-\infty}^{+\infty}\frac{p_{N-1}(s)w(s)ds}{s-z}
\end{pmatrix}.\end{equation}
This is the solution of the Fokas-Its-Kitaev RH problem \cite{FokasItsKitaev}, which is the above RH problem for $\Psi$ with
	\[\Lambda=\mathbb R,\qquad \Psi_\infty(z)=z^{N\sigma_3},\qquad \sigma_3:=\begin{pmatrix}1&0\\0&-1\end{pmatrix}.\]
With $\mathbf f, \mathbf g$ as in			\eqref{fg Psi}, the kernel $K_N(x,y)$ is then the Christoffel-Darboux kernel (note the factor $w(x)$ which was not present in \eqref{eq:CD}; this is due to the different reference measures $\dx x$ here and $w(x)\dx x$ in \eqref{eq:CD})
\[K_N(x,y)=\frac{\kappa_{N-1}w(x)}{\kappa_N}\frac{p_N(x)p_{N-1}(y)-p_N(y)p_{N-1}(x)}{x-y}.\]
The RH problem for $\Psi^\theta_{|\emptyset}$ is then the Fokas-Its-Kitaev RH problem, but with a deformed weight function $(1-\theta)w$; for non-empty $\mathbf v$, the function {$\Psi^\theta_{|\mathbf v}(z)\prod_{v\in\mathbf v}(z-v)^{-\sigma_3}$} then satisfies the Fokas-Its-Kitaev RH problem with weight function $(1-\theta(z))w(z)\prod_{v\in\mathbf v}(z-v)^2$ and with $N$ replaced by $N$ minus the cardinality of $\mathbf v$, which is in perfect agreement with Proposition \ref{prop:OPE}. This RH problem has been an object of intensive study in the past decades and large $N$ asymptotics for its solution have been obtained for a large class of weight functions, see e.g.\ \cite{Deift, KuijlaarsMOP, Kuijlaars}.
\end{example}		
\begin{example}			
Write
\[\Psi(z)=\begin{cases}
	\begin{pmatrix}e^{\pi iz} & e^{\pi iz}\\ -e^{-\pi iz} & 0\end{pmatrix}, & \mbox{for \( \Im z>0\)}, \\
	\begin{pmatrix}e^{\pi iz} & 0 \\ -e^{-\pi iz} & e^{-\pi iz}\end{pmatrix}, & \mbox{for \( \Im z<0\)}.
\end{cases}\]
			This matrix
			satisfies the RH problem for $\Psi$ with 
			\[\Lambda=\mathbb R,\qquad w(x)=1,\qquad\Psi_\infty=\Psi.
	\]
With $\mathbf f, \mathbf g$ as in			\eqref{fg Psi}, the kernel $K(x,y)$ is then the sine kernel \eqref{sine kernel}. The associated RH problem for $\Psi^\theta_{|\emptyset}$ for $\theta=1_B$ the indicator function of a union of intervals was the  RH problem studied originally in \cite{DIZ}, and was also analyzed succesfully in  \cite{BothnerDeiftItsKrasovsky} for $\theta=\gamma 1_B$ with $\gamma\in(0,1)$.
\end{example}				
			\begin{example}			
Write
\begin{equation}
\Psi(z) := \sqrt{2 \pi} e^{\frac{\pi i}{6}} \times \begin{cases}
\begin{pmatrix}
{\rm Ai}(z) & {\rm Ai}(\omega^{2}z) \\
-i{\rm Ai}^{\prime}(z) & -i\omega^{2}{\rm Ai}^{\prime}(\omega^{2}z)
\end{pmatrix}e^{-\frac{\pi i}{6}\sigma_{3}}, & \mbox{for $\Im z>0$}, \\
\begin{pmatrix}
{\rm Ai}(z) & - \omega^{2}{\rm Ai}(\omega z) \\
-i{\rm Ai}^{\prime}(z) & i{\rm Ai}^{\prime}(\omega z)
\end{pmatrix}e^{-\frac{\pi i}{6}\sigma_{3}}, & \mbox{for }\Im z<0,
\end{cases}
\end{equation}
with $\omega = e^{\frac{2\pi i}{3}}$ and ${\rm Ai}$ the Airy function.
Using the relation ${\rm Ai}(z)+\omega{\rm Ai}(\omega z)+\omega^2{\rm Ai}(\omega^2 z)=0$, one verifies, using the asymptotic behavior of the Airy function, that this matrix
			satisfies the RH problem for $\Psi$ with 
			\[\Lambda=\mathbb R,\quad w(x)=1,\quad \Psi_\infty(z)=\begin{cases}\frac{1}{\sqrt 2}z^{-\frac{\sigma_{3}}{4}}\begin{pmatrix}1&i\\i&1\end{pmatrix}  e^{-\frac{2}{3}z^{3/2}\sigma_{3}}&\mbox{for $|\arg z|<\pi-\delta$},\\
			\frac{1}{\sqrt 2}z^{-\frac{\sigma_{3}}{4}}\begin{pmatrix}1&i\\i&1\end{pmatrix}  e^{-\frac{2}{3}z^{3/2}\sigma_{3}}\begin{pmatrix}
1&0\\\pm 1&1			\end{pmatrix}
&\mbox{for $|\arg z|<\pi-\delta$, $\pm\Im z>0$,}			\end{cases}
			\]
			for any sufficiently small $\delta>0$,
with principal branches of the root functions.
With $\mathbf f, \mathbf g$ as in			\eqref{fg Psi}, the kernel $K(x,y)$ is then the Airy kernel \[K(x,y)=\frac{{\rm Ai}(x){\rm Ai}'(y)-{\rm Ai}(y){\rm Ai}'(x)}{x-y}.\]
The RH problem for $\Psi^\theta_{|\emptyset}$, or an equivalent RH problem obtained after opening of the lenses, was then studied in \cite{CharlierClaeys2, CafassoClaeys, CafassoClaeysRuzza} for a rather large class of functions $\theta$, in order to derive differential equations and asymptotics for Airy kernel Fredholm determinants of the form
$\det(1-\mathrm M_{\sqrt\theta}\mathrm K\mathrm M_{\sqrt\theta})$. In particular, these determinants are important in the study of the narrow wedge solution of the Kardar-Parisi-Zhang equation and in the study of finite temperature free fermions, and they have a remarkably rich integrable structure: they are connected to the Korteweg-de Vries equation and to an integro-differential version of the second Painlev\'e equation. The asymptotics resulting from this RH analysis allow also to derive asymptotics for the conditional kernels $K^{\theta}_{|\emptyset}$.
Moreover, the density of the pushed Coulomb gas from \cite{CGKLDT, KrajenbrinkLeDoussal} can be interpreted as an approximation of the one-point function $K^{\theta}_{|\emptyset}(x,x)$.
\end{example}

\medskip

The conclusion of this section is two-fold. First, we just showed that the IIKS RH problem allows one to characterize the conditional kernels $K^\theta_{|\emptyset}$ and $K^\theta_{|\mathbf v}$ in terms of a RH problem, which can potentially be analyzed asymptotically. Secondly, the conditional ensembles $\mathbb P^\theta_{|\emptyset}$ enable us to give a natural probabilistic interpretation to the IIKS method, as we explain next.

\medskip

The starting point of the IIKS method to study Fredholm determinants of the form $\det(1-\mathrm M_\theta\mathrm K)$, is the 			
			Jacobi identity: if $\theta(x)=\theta_t(x)$ depends smoothly on a deformation parameter $t$, we have	
						\begin{align*}\partial_t \log \det(1-\mathrm M_{\sqrt{\theta_t}} \mathrm K\mathrm M_{\sqrt{\theta_t}})&=-\Tr \left[\partial_t\mathrm M_{\theta_{t}} \mathrm K(1-\mathrm M_{\theta_t} \mathrm K)^{-1}\right]
				=-\int_{\Lambda}\partial_t\theta_t(x)K^{\theta_{t}}_{|\emptyset}(x,x)\dx\mu(x).
				\end{align*}
		In analytic terms, this implies that 	
one can compute the Fredholm determinant
$\det(1-\mathrm M_{\theta_t}\mathrm K)$, or at least its logarithmic derivative, provided that one has sufficiently accurate knowledge of the conditional kernel $K^{\theta_{t}}_{|\emptyset}(x,x)$. 
				
				\medskip

In probabilistic terms, if $1-\theta_t$ does not vanish, this identity reads
			\begin{equation}
				\partial_t\log \mathbb E \prod_{x\in{\rm supp}\,\xi}(1-\theta_t(x))=\mathbb E^{\theta_t}_{|\emptyset}\int_\Lambda\partial_t\log(1-\theta_t(x))\dx\xi(x).
			\end{equation}
The logarithmic derivative of an average multiplicative statistic in $\mathbb P$ is thus equal to an average linear statistic in the conditional ensemble $\mathbb P^\theta_{|\emptyset}$.			
			Moreover, if the function $t\mapsto \theta_t(x)$ is a smooth probability distribution function, then the function
		\begin{equation} 
				h^\theta_x(t)=-\partial_t\log(1-\theta_t(x))=\frac{\partial_t\theta_t(x)}{1-\theta_t(x)}=\mbox{Prob}\left(t_x= t\ |\ t_x\geq t\right),
			\end{equation}
has the natural interpretation of a hazard rate likelihood of the random variable $t_x$ with distribution $t\mapsto \theta_t(x)$. We can interpret $t_x$ for instance as the detection time of point $x$, and then $h_x^\theta(t)$ is the likelihood to detect the particle at position $x\in{\rm supp}\,\xi$ at time $t$, given that it was not detected before time $t$.

	\subsubsection*{Acknowledgements}
	The authors were supported by the Fonds de la Recherche Scientifique-FNRS under EOS project O013018F. They are grateful to Alexander Bufetov for instructive discussions about rigidity, and to Guilherme Silva for useful discussions.

			\medskip \ \medskip

			%\noindent
			%{\small{\sc AMS Subject Classification (2010)}: 41A60, 60B20, 33B15, 33E20, 35Q15.}
			%
			%\noindent
			
			%

		\end{document}